\newcolumntype{C}{>{\centering\arraybackslash}X}
\newcommand\makebig[2]{%
  \@xp\newcommand\@xp*\csname#1\endcsname{\bBigg@{#2}}%
  \@xp\newcommand\@xp*\csname#1l\endcsname{\@xp\mathopen\csname#1\endcsname}%
  \@xp\newcommand\@xp*\csname#1r\endcsname{\@xp\mathclose\csname#1\endcsname}%
}
\newcommand*\Bell{\ensuremath{\boldsymbol\ell}}
\newcommand{\Balpha}{\boldsymbol\alpha}
\newcommand{\Bbeta}{\boldsymbol\beta}
\newcommand{\Bgamma}{\boldsymbol\gamma}
\newcommand{\Bdelta}{\boldsymbol\delta}
\newcommand{\vol}[2]{{\rm{vol}_{\mbox{\tiny$#1$}}\left({\mbox{$#2$}}\right)}}
\DeclareMathOperator{\conv}{conv}
\DeclareMathOperator{\volume}{Vol}
\newcommand{\R}{\mathbb{R}}
\newtheorem{theorem}{Theorem} 
\newtheorem{proposition}{Proposition}
\newtheorem{lemma}{Lemma}
\newtheorem{definition}{Definition}
\theoremstyle{remark}
\title{Volume Formulae for the Convex Hull of the Graph of a Trilinear Monomial: A Complete Characterization for General Box Domains}
\author{Lillian Makhoul and Emily Speakman \\[0.5em] {\small Department of Mathematical and Statistical Sciences} \\[0.1em]
{\small University of Colorado Denver, Denver, CO USA} \\[0.5em] {\small \url{lillian.makhoul@ucdenver.edu} \quad \url{emily.speakman@ucdenver.edu}}}
\date{}
\begin{document}
\maketitle

\begin{abstract}
Solving difficult mixed-integer nonlinear programs via spatial branch-and-bound requires effective convex outer-approximations of nonconvex sets. In this framework, complex problem formulations are decomposed into simpler library functions, whose relaxations are then composed to build relaxations of the overall problem. The trilinear monomial serves as one such fundamental library function, appearing frequently as a building block across diverse applications. By definition, its convex hull provides the tightest possible relaxation and thus serves as a benchmark for evaluating alternatives.  Mixed volume techniques have yielded a parameterized volume formula for the convex hull of the graph of a trilinear monomial; however, existing results only address the case where all six bounds of the box domain are nonnegative. This restriction represents a notable gap in the literature, as variables with mixed-sign domains arise naturally in practice. In this work, we close the gap by extending to the general case via an exhaustive case analysis. We demonstrate that removing the nonnegative domain assumption alters the underlying structure of the convex hull polytope, leading to six distinct volume formulae that together characterize all possible parameter configurations.
\end{abstract}


\section{Introduction}\label{sec:Intro}

Mixed-integer nonlinear programs (MINLPs) are a broad class of optimization problems arising across numerous domains of science, engineering, and economics. The general framework combines the modeling power of nonlinear and potentially nonconvex functions with discrete decision variables, enabling formulations that can effectively capture many real-world complexities.

Finding provably globally optimal solutions to MINLPs is a fundamental challenge in computational optimization. Despite the inherent complexity, solvers such as \texttt{BARON} \citep{Sahinidis1996}, \texttt{COUENNE} \citep{Couenne08}, \texttt{ANTIGONE} \citep{Antigone2014}, \texttt{SCIP} \citep{SCIP2023}, and others, are able to optimize small to moderately sized instances. The primary algorithmic
framework underlying each of these solvers is spatial branch-and-bound (sBB), which at its core operates via three key steps:
\begin{enumerate}
\item {\it Function Decomposition:} A fundamental assumption of sBB is that complex functions in the problem formulation are ``factorable" \citep{McCormick76}.  That is, each function can be systematically decomposed against a given library of simple, often low-dimensional, functions into a directed acyclic graph (DAG). 
\item	{\it Convex Relaxations:} A second key assumption of sBB is that for every function in the given library, we have a method by which we can construct a valid convex outer-approximation (i.e., relaxation) of the function's graph over a particular domain. Given an appropriate function decomposition, the convex relaxations of library functions are composed by introducing auxiliary variables to represent intermediate function values according to the DAG structure; see Figure \ref{fig:DAG} for an example.  By composing relaxations, the algorithm constructs a valid relaxation of the original problem formulation which is used to bound the optimal objective value. 
\item	{\it Recursive Refinement:} Once initial bounds are obtained, sBB iteratively refines the solution space through ``spatial branching", dividing selected variable domains into smaller subregions. For each subregion, tighter convex relaxations are constructed, leading to sharper bounds on the objective function. By recursively applying this process, the algorithm narrows the gap between upper and lower bounds until some convergence criterion is met. 
\end{enumerate}
For a more detailed discussion of sBB, see  \cite{RYOO1995551,ryoo96,adjiman98,smith99,Tawarmalani2002}. 

\begin{figure}[h]
    \centering
    \includegraphics[width=0.55\linewidth]{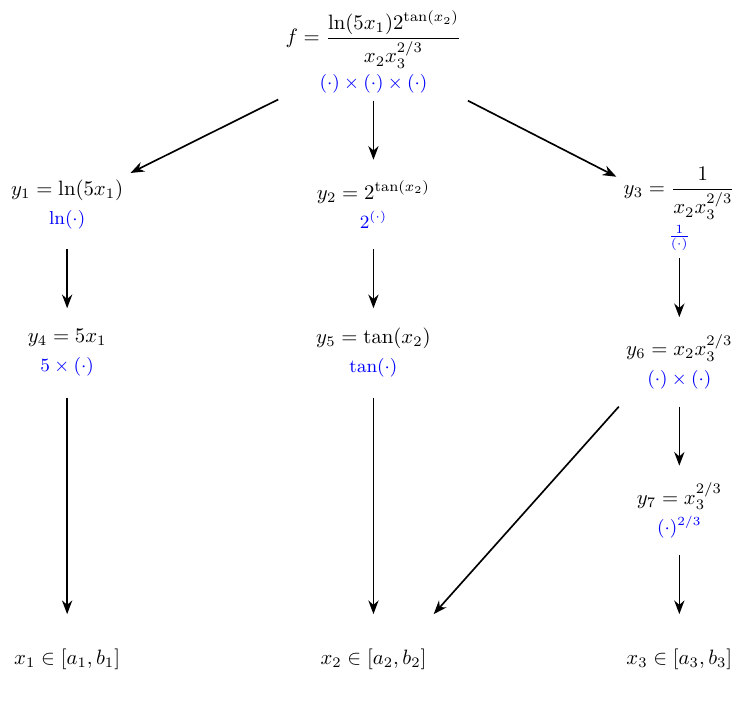}
    \caption{Example of a nonconvex function decomposed (factored) against the library functions shown.
    }
    \label{fig:DAG}
\end{figure}

The efficiency of sBB fundamentally depends on the convexifications used to approximate the graphs of library functions. Problem relaxations are solved at every node of the branch-and-bound tree, and tighter convexifications yield stronger bounds on the objective value, allowing the algorithm to prune more of the search space and reduce overall tree size. However, a natural tradeoff exists between relaxation tightness and computational overhead. The convex hull may require complex inequality descriptions that are expensive to optimize over. At the same time, because convexifications are composed throughout the expression DAG, improvements at the library-function level benefit the entire model, while weak relaxations compound throughout the formulation.  

\subsection{Using volume to compare alternative convex relaxations}
Given that the choice of convexification method significantly impacts sBB performance, the analytic comparison of alternative methods is desirable. In the literature, the $(n+1)$-dimensional volume of the convex hull of a library function's graph has served as a baseline for such comparisons. This is intuitive: the convex hull, by definition, has the smallest volume among all convex relaxations. Thus, the closer in volume a relaxation is to the hull, the tighter its approximation. Experimental results in \cite{SpeakmanYuLee17} and \cite{LSS22} provide strong evidence that choosing library function convexifications with a smaller volume leads to improved objective function bounds when solving the overall problem relaxation.  

Volume-based comparison of mathematical optimization formulations was introduced by \cite{LeeMorris94}, who studied various combinatorial polytopes. See also \cite{Steingrímsson1994} for volume results on vertex-packing polytopes and \cite{KoLee97} for work on relaxations of the boolean-quadric polytope.
However, the focus of these early works was not on the low dimensional functions of interest in sBB.  Volume-based comparison of library function convexifications in the sBB context was considered by \cite{CafieriLeeLiberti2010}, who computationally investigated alternative relaxations of quadrilinear terms. Beyond computational studies, volume has also been employed for the {\it analytical} comparison of low-dimensional sets. For instance,  \cite{LSS22, LSSX23} compared convex relaxations of simple disjunctions, including those obtained via the perspective function reformulation.

The general multilinear monomial $g(x_1, \dots, x_n)\coloneq x_1x_2\dots x_n$ on a box domain $[a_1,b_1]\times[a_2,b_2]\times\dots\times[a_n,b_n]$ is a key substructure in many nonconvex
optimization problems; consequently, convex relaxations of its graph have been extensively studied. Relevant contributions include convex envelope characterizations \citep{rikun97, MeyerFloudas2005,JachMichaelsWeismantel2008}, and analysis of the strength of various alternative polyhedral relaxations \citep{RyooSahinidis2001, LuedtkeNamazafiLinderoth2012, DelPiaKhajavirad2021,Khajavirad2023, SchutteWalter2024}.

Prior research has also explicitly considered facet descriptions and volume formulae for relaxations of low-dimensional multilinear terms over a box domain.  There has been a particular focus on the trilinear case of $n=3$, which is highly relevant to this work.  See \cite{meyer04mixed,meyer04pos/neg} for facet characterizations, as well as \cite{SpeakmanLee17, SpeakmanYuLee17, LeeSkipperSpeakman18, SpeakmanAverkov18} for volume analyses. The present work extends these contributions by generalizing a result from \cite{SpeakmanLee17} using the techniques of \cite{SpeakmanAverkov18}.

We consider the trilinear monomial $f(x_1,x_2,x_3)\coloneq x_1x_2x_3$ with $x_i\in[a_i,b_i],\ a_i<b_i$ for $i=1,2,3$, and without loss of generality, label the variables so that the conditions
   \begin{align*}a_1b_2\leq b_1a_2,\qquad a_1b_3\leq b_1a_3,\qquad \text{and} \qquad a_2b_3\leq b_2a_3 \label{eq:omega} \tag{$\Omega$}
   \end{align*}
are satisfied.

We also define the convex hull of the graph of $f$ (parameterized by the $3 \times 2 =6$ variable bounds) as
$$\mathcal P_H^3(\mathbf{a},\mathbf{b})\coloneq\text{conv}\{(f,x_1,x_2,x_3)\in\mathbb R^4\ :\ f=x_1x_2x_3,\ x_i\in[a_i,b_i],\ i=1,2,3\}.$$

It is well-known that $\mathcal P_H^3(\mathbf{a},\mathbf{b})$ is polyhedral and has $2^3=8$ extreme points, see \cite{rikun97}, for example.  Using this structure, \cite{SpeakmanLee17} construct a triangulation of $\mathcal{P}^3_H(\mathbf{a},\mathbf{b})$ and derive a formula for its volume in terms of the six parameters defining the box domain.  The result assumes all variable bounds are nonnegative. 
\begin{theorem}[Theorem 4.1 in \cite{SpeakmanLee17}]\label{thm:og-formula}
    Assume $0\leq a_i < b_i$ for $i=1,2,3$ and \eqref{eq:omega} holds. Then
    \begin{align*}
    \volume\left (\mathcal{P}_H^3(\mathbf{a},\mathbf{b})\right ) &= (b_1-a_1)(b_2-a_2)(b_3-a_3)\times\\
    &(b_1(5b_2b_3-a_2b_3-b_2a_3-3a_2a_3)+a_1(5a_2a_3-b_2a_3-a_2b_3-3b_2b_3))/24.
    \end{align*}
\end{theorem}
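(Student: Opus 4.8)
The plan is to follow the triangulation strategy referenced in the statement. Since $\mathcal{P}_H^3(\mathbf{a},\mathbf{b})$ is a full-dimensional polytope in $\mathbb{R}^4$ with exactly eight known vertices, I would partition it into $4$-simplices, compute each simplex's volume via the determinant formula $\tfrac{1}{4!}\lvert\det[\,v_1-v_0,\,v_2-v_0,\,v_3-v_0,\,v_4-v_0\,]\rvert$, and sum. The eight vertices are the images of the box corners under the lifting $\phi(x_1,x_2,x_3)=(x_1x_2x_3,\,x_1,\,x_2,\,x_3)$, i.e.\ the points $\phi(x)$ with $x_i\in\{a_i,b_i\}$; for instance $(a_1a_2a_3,a_1,a_2,a_3)$ and $(b_1b_2b_3,b_1,b_2,b_3)$. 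The appearance of $24=4!$ in the denominator of the target formula is exactly what this approach predicts.

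First I would fix the combinatorial type of the polytope. The projection of $\mathcal{P}_H^3$ onto the $(x_1,x_2,x_3)$-coordinates is the box, and for each point in the box the fiber is the segment $[\underline f,\overline f]$, where $\underline f$ and $\overline f$ are the convex and concave envelopes of $f$; thus the lower and upper boundaries of $\mathcal{P}_H^3$ are the graphs of these two piecewise-linear envelopes. The role of the hypotheses $0\le a_i<b_i$ and \eqref{eq:omega} is precisely to pin down which affine pieces occur and how they subdivide the box, so that a single triangulation and a single formula are valid across all admissible parameters. I would take the common refinement of the two envelope subdivisions, triangulate the box into tetrahedra on which both $\underline f$ and $\overline f$ are affine, and then triangulate the slab of $\mathcal{P}_H^3$ lying over each such tetrahedron into $4$-simplices.

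Equivalently, and perhaps more cleanly, the volume can be written as the integral $\int_{\text{box}}\bigl(\overline f(x)-\underline f(x)\bigr)\,dx_1\,dx_2\,dx_3$. Having identified the affine form of each envelope on each cell of the subdivision, this integral reduces to a finite sum of integrals of affine functions over tetrahedra, each of which is elementary. Either route --- summing $4$-simplex determinants or integrating envelope differences --- produces a sum of monomials in the six parameters that I would then collect and factor to match the stated expression, in which the box edge-lengths $(b_1-a_1)$, $(b_2-a_2)$, $(b_3-a_3)$ appear as a common factor.

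The main obstacle is the first step: correctly identifying the envelope subdivision and verifying that the proposed collection of simplices genuinely tiles $\mathcal{P}_H^3$ (nonoverlapping interiors whose union is the whole polytope). This is the delicate combinatorial part, and it is where the \eqref{eq:omega} ordering does real work; once a valid triangulation is in hand, the determinant evaluations and the final algebraic simplification are routine though tedious. As a sanity check I would test the symmetric instance $a_i=0$, $b_i=1$, which satisfies \eqref{eq:omega} trivially and for which the formula collapses to $5/24$; reproducing this value independently, e.g.\ by direct integration of the envelope gap over $[0,1]^3$, would give confidence in the triangulation before committing to the general computation.
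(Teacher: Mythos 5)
Your outline is sound, and your sanity check at the unit cube ($5/24$) is correct, but note that this paper never proves Theorem~\ref{thm:og-formula} by your route: the statement is imported by citation from \cite{SpeakmanLee17}, and the paper's own machinery re-derives it (restated as Theorem~\ref{theorem:nonnegative restated}, and recovered as Case~1 of Theorem~\ref{bigtheorem}) via the mixed-volume method of \cite{SpeakmanAverkov18}. Concretely, instead of triangulating the $4$-polytope, the paper splits the eight vertices by the value of $x_3$ into two parallel tetrahedra $Q$ and $R$ (Section~\ref{sec:geometric}), writes $\vol{4}{\mathcal{P}_H^3}$ as $\int \vol{3}{\lambda Q + (1-\lambda)R}\,dx_3$, expands the integrand via Theorem~\ref{thm:mixedvol} into $\vol{3}{Q}$, $V(Q,Q,R)$, $V(Q,R,R)$, $\vol{3}{R}$, and evaluates the mixed volumes by summing support functions over outer facet normals (Theorem~\ref{thm:polytopemixedvol}, Lemma~\ref{lemma:outerfacetnormals}). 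Your route is instead exactly that of the original reference: triangulate (equivalently, integrate the concave-minus-convex envelope gap over the box, which is valid since the hull projects onto the box with segment fibers) and sum $\tfrac{1}{4!}$-determinants. The trade-off is real: your approach is elementary once a triangulation is in hand, but its entire difficulty sits in the step you explicitly defer --- exhibiting the envelope subdivision (in effect the Meyer--Floudas facet description under \eqref{eq:omega}) and verifying that the proposed simplices tile $\mathcal{P}_H^3$ with disjoint interiors. That verification is the bulk of the proof in \cite{SpeakmanLee17}, so as written your proposal is a correct plan rather than a complete proof. The mixed-volume route buys precisely the avoidance of any global tiling argument: all case analysis is reduced to pointwise comparisons of inner products (which vertex attains each support-function maximum), which is local and mechanical --- and, as this paper demonstrates, is what makes the extension beyond nonnegative bounds tractable, whereas your triangulation would have to be re-established for each sign configuration.
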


Furthermore, Theorem \ref{thm:og-formula} was subsequently re-proved via an alternative method. \cite{SpeakmanAverkov18} derive the result utilizing tools from mixed volume theory.

\subsection{Our contribution}
Despite receiving significant attention, a notable gap remains in the literature with regard to existing volume formulae for $\mathcal P_H^3(\mathbf{a},\mathbf{b})$.  Both \cite{SpeakmanLee17} and \cite{SpeakmanAverkov18} restrict attention to nonnegative variable bounds, i.e., $a_i \geq 0$ for $i = 1,2,3$.  This leaves an open question: can closed-form volume formulae be derived for general mixed-sign domains?  In this work, we address that gap by extending the existing results to general box domains via an exhaustive case analysis.  Employing the mixed volume framework of \cite{SpeakmanAverkov18}, we derive explicit volume formulae for all parameter configurations.  

\subsection{Organization}
The remainder of this paper is organized as follows. Sections \ref{sec:notation}--\ref{sec:prelim} provide the required preliminaries, beginning by defining our notation in Section \ref{sec:notation}. Section \ref{sec:background} reviews the necessary background from mixed volume theory, and Section \ref{sec:geometric} provides some useful geometric insight. Section \ref{sec:prelim} establishes the foundational results required for our main theorem, which is presented in Section \ref{sec:main}. We end with brief concluding remarks and suggestions for future work in Section \ref{sec:conclusion}. The \hyperlink{appendix}{appendix} contains the technical lemmas required for our main results, along with complete proofs.

\section{Notation}
\label{sec:notation}
We use standard notation for polytopes, including $\conv(\cdot)$ to denote the convex hull, and write $E(P)$ for the set of extreme points of a polytope $P$. For a convex body $B\subset \R^d$, $\vol{d}{B}$ denotes its $d$-dimensional volume (i.e., Lebesgue measure).  We write $\mathcal{K}^n$ to denote the set of all nonempty compact convex sets in $\mathbb{R}^n$.  Vectors are denoted in boldface (e.g., $\mathbf{v}$, $\mathbf{u}$).

Throughout this paper,  we work with box domains: $x_i \in [a_i, b_i]$ where $a_i < b_i$ for $i = 1, 2, 3$. In the literature, this notation (that is, characterizing the variable intervals by their lower and upper bounds) has been used extensively in prior work on the trilinear case, including facet descriptions of $\mathcal{P}^3_H$ \citep{meyer04pos/neg, meyer04mixed}, inequality descriptions of various natural relaxations \citep{SpeakmanLee17}, and related volume formulae \citep{SpeakmanLee17, LeeSkipperSpeakman18, SpeakmanAverkov18}. However, intervals can equivalently be characterized by their center point and half-length.

For $i = 1, 2, 3$, define
\begin{equation*}
c_i \coloneq \frac{a_i + b_i}{2} \quad \text{and} \quad \ell_i \coloneq \frac{b_i - a_i}{2},
\end{equation*}
so that $a_i = c_i - \ell_i$, $b_i = c_i + \ell_i$, and $x_i \in [c_i - \ell_i, c_i + \ell_i]$.

We then write
$$\mathcal P_H^3(\mathbf{c},\Bell)\coloneq\text{conv}\left\{(f,x_1,x_2,x_3)\in\mathbb R^4\ :\ f=x_1x_2x_3,\ x_i\in[c_i-\ell_i,c_i+\ell_i],\ i=1,2,3\right\}.$$
Substituting into \eqref{eq:omega} and simplifying yields an equivalent formulation of the variable labeling condition in terms of centers and half-lengths. Since $\ell_i > 0$ for all $i$, we obtain
\begin{align}
        \frac{c_1}{\ell_1}\leq \frac{c_2}{\ell_2}\leq \frac{c_3}{\ell_3}. \label{eq:omegaprime} \tag{$\Omega'$}
\end{align}

\noindent Using this notation, the result of \cite{SpeakmanLee17} can be restated as follows.
\begin{theorem}[Theorem 4.1 in \cite{SpeakmanLee17}, restated]\label{theorem:nonnegative restated}
Suppose $0<\ell_i\leq c_i\;$ for $i=1,2,3$ and \eqref{eq:omegaprime} holds. Then
$$\vol{4}{\mathcal{P}_H^3(\mathbf{c},\Bell)}=\frac{8}{3}\ell_1^2\ell_2^2\ell_3^2\left(\frac{c_1}{\ell_1}+2\frac{c_2}{\ell_2}+2\frac{c_3}{\ell_3}\right).$$
\end{theorem}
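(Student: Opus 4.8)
The plan is to derive the claimed identity directly from Theorem~\ref{thm:og-formula} via the change of variables $a_i = c_i - \ell_i$ and $b_i = c_i + \ell_i$. Because Theorem~\ref{theorem:nonnegative restated} is a restatement of an already-proved result rather than a new geometric fact, no information about the polytope $\mathcal{P}_H^3$ or its triangulation is needed: the whole argument is an algebraic verification that the two closed forms agree under this substitution. The hypotheses transfer immediately, since $0 \le a_i < b_i$ is equivalent to $0 < \ell_i \le c_i$ and, as already recorded in the text, \eqref{eq:omega} is equivalent to \eqref{eq:omegaprime}; so it remains only to match the two volume expressions.

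First I would simplify the prefactor: $(b_1-a_1)(b_2-a_2)(b_3-a_3) = (2\ell_1)(2\ell_2)(2\ell_3) = 8\ell_1\ell_2\ell_3$. Next I would expand the bracketed cubic. Writing each pairwise product $b_2b_3$, $a_2b_3$, $b_2a_3$, $a_2a_3$ in centre--half-length form and collecting coefficients, the key observation is that within each inner bracket the pure-centre contribution vanishes: the coefficients $(5,-1,-1,-3)$ of the four products sum to zero, so the $c_2c_3$ terms cancel and only terms linear in a half-length survive. Concretely, $5b_2b_3 - a_2b_3 - b_2a_3 - 3a_2a_3$ reduces to $8c_2\ell_3 + 8\ell_2c_3 + 4\ell_2\ell_3$, while $5a_2a_3 - b_2a_3 - a_2b_3 - 3b_2b_3$ reduces to $-8c_2\ell_3 - 8\ell_2c_3 + 4\ell_2\ell_3$. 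Grouping the $b_1$ and $a_1$ contributions, the antisymmetric part combines through $b_1 - a_1 = 2\ell_1$ and the symmetric part through $b_1 + a_1 = 2c_1$, giving $16\ell_1 c_2\ell_3 + 16\ell_1\ell_2 c_3 + 8c_1\ell_2\ell_3$ for the bracket. Multiplying by $8\ell_1\ell_2\ell_3$, dividing by $24$, and pulling out the common factor $\tfrac{8}{3}\ell_1^2\ell_2^2\ell_3^2$ then yields $\tfrac{8}{3}\ell_1^2\ell_2^2\ell_3^2\bigl(\tfrac{c_1}{\ell_1} + 2\tfrac{c_2}{\ell_2} + 2\tfrac{c_3}{\ell_3}\bigr)$, as required.

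I do not expect any conceptual obstacle; the single difficulty is bookkeeping, since the expansion produces roughly a dozen signed monomials in $c_2, c_3, \ell_2, \ell_3$ per inner bracket and the main risk is a dropped sign or a miscollected coefficient. Two internal checks make the computation self-correcting. First, the coefficients $(5,-1,-1,-3)$ of the four products in each inner bracket sum to zero, forcing every pure-centre term $c_2c_3$ to cancel; a surviving $c_2c_3$ term would immediately flag an error. Second, the second inner bracket is obtained from the first by the swap $a_i \leftrightarrow b_i$ for $i = 2,3$, i.e.\ by $\ell_2 \mapsto -\ell_2$ and $\ell_3 \mapsto -\ell_3$, so its reduced form must equal that of the first bracket under those sign flips, which is exactly the relation between $8c_2\ell_3 + 8\ell_2c_3 + 4\ell_2\ell_3$ and $-8c_2\ell_3 - 8\ell_2c_3 + 4\ell_2\ell_3$. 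Together these checks pin down the coefficients and leave only the final multiply-and-divide, which is routine.
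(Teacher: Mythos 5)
Your proposal is correct and is essentially the paper's own (implicit) argument: the paper presents this result as a direct restatement of Theorem~\ref{thm:og-formula} under the substitution $a_i=c_i-\ell_i$, $b_i=c_i+\ell_i$, and your computation---the prefactor $8\ell_1\ell_2\ell_3$, the reduction of the inner brackets to $8c_2\ell_3+8\ell_2c_3+4\ell_2\ell_3$ and $-8c_2\ell_3-8\ell_2c_3+4\ell_2\ell_3$, and the recombination via $b_1-a_1=2\ell_1$ and $b_1+a_1=2c_1$---supplies exactly the algebraic verification the paper leaves unstated. I checked every coefficient and the transfer of hypotheses ($0\le a_i<b_i$ if and only if $0<\ell_i\le c_i$, and \eqref{eq:omega} becoming \eqref{eq:omegaprime}), and all of it is correct.
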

This center and half-length notation simplifies exposition and highlights the essential structure in our analysis. We use it throughout the remainder of the paper.

\section{Background}
\label{sec:background}

This section reviews the key concepts from mixed volume theory needed for our analysis. The presentation closely follows Section 2 of  \cite{SpeakmanAverkov18}, with proofs omitted. For complete details and proofs, see \cite{SpeakmanAverkov18} and \cite{Schneider13}.

\begin{theorem}[Theorem 2.1 in \cite{SpeakmanAverkov18}]
\label{thm:mixedvol}
    There is a unique nonnegative function $V:\underset{n}{\underbrace{\mathcal{K}^n\times\cdots\times\mathcal{K}^n}}\rightarrow\mathbb R$, the \textit{mixed volume}, which is invariant under permutation of its arguments, such that, for every positive integer $m>0$,  
    $$\vol{n}{t_1K_1+t_2K_2+\cdots+t_mK_m}=\sum_{i_1,\ldots,i_n=1}^mt_{i_1}\ldots t_{i_n}V(K_{i_1},\ldots,K_{i_n}),$$
    for arbitrary $K_1,\ldots,K_m\in\mathcal{K}^n$ and $t_1,t_2,\ldots,t_n\in\mathbb R_+$.
\end{theorem}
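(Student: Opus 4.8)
The plan is to recognize this as Minkowski's classical theorem on mixed volumes and to prove it in two stages: first show that $\vol{n}{t_1K_1+\cdots+t_mK_m}$ is a homogeneous polynomial of degree $n$ in the nonnegative variables $t_1,\ldots,t_m$, and then read off its coefficients to define $V$. The existence of such a polynomial expansion is the whole content of the theorem; once it is in hand, the symmetric coefficients are forced.

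First I would treat the case where each $K_i$ is a polytope, proceeding by induction on $n$. The key observation is that the normal fan of $t_1P_1+\cdots+t_mP_m$ equals the common refinement of the individual normal fans for every choice of strictly positive $t_i$, so the combinatorial type of the sum is constant on the positive orthant. Fixing this type, I would compute the volume by the facet (pyramid) decomposition: express $\vol{n}{\cdot}$ as a sum over facets of (support value) $\times$ (facet $(n-1)$-volume). Each facet is itself a Minkowski sum of the corresponding faces of the $P_i$, whose $(n-1)$-volume is a homogeneous polynomial of degree $n-1$ in the $t_i$ by the induction hypothesis, and each support value is linear in the $t_i$; multiplying and summing yields a homogeneous polynomial of degree $n$. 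The base case $n=1$ is just the length of a Minkowski sum of segments.

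Next I would extend to arbitrary $K_i\in\mathcal{K}^n$ by approximation. Every compact convex set is a Hausdorff limit of polytopes; Minkowski addition and scaling are continuous in the Hausdorff metric, and $\vol{n}{\cdot}$ is continuous on $\mathcal{K}^n$. Since the space of homogeneous degree-$n$ polynomials in $m$ variables is finite dimensional (hence closed under pointwise limits), the limiting volume function is again such a polynomial. Writing it with symmetric coefficients $V(K_{i_1},\ldots,K_{i_n})$ gives the claimed expansion, and uniqueness is immediate: the values of a polynomial determine its coefficients, and the imposed permutation symmetry selects a unique symmetric representative (concretely, the fully mixed coefficients are recovered by the polarization/inclusion–exclusion formula applied to $\vol{n}{K_{i_1}+\cdots+K_{i_k}}$).

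For nonnegativity I would reuse the facet decomposition. By translation invariance of mixed volumes in each argument, I may translate $K_1$ so that it contains the origin, making its support function nonnegative on all relevant outer normals. The inductive facet formula then writes $V(K_1,\ldots,K_n)$ as a sum of products of a nonnegative support value with an $(n-1)$-dimensional mixed volume, which is nonnegative by induction; this settles $V\geq 0$. The main obstacle is the polynomiality step for polytopes: controlling the facet structure of the Minkowski sum uniformly in the $t_i$ and organizing the induction so that homogeneity and degree come out correctly. The subsequent approximation argument is routine given continuity of volume, though one must confirm that membership in the finite-dimensional space of degree-$n$ forms is preserved in the Hausdorff limit.
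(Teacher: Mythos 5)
The paper does not actually prove this statement---it is quoted verbatim as background (Theorem 2.1 of \cite{SpeakmanAverkov18}), with all proofs explicitly omitted and deferred to \cite{SpeakmanAverkov18} and \cite{Schneider13}---and your proposal is precisely the classical Minkowski/Schneider argument found in those sources: polynomiality for polytopes by induction on $n$ via the support-value times facet-volume decomposition (with the normal-fan refinement fixing the combinatorial type on the positive orthant), extension to arbitrary bodies in $\mathcal{K}^n$ by Hausdorff approximation and continuity of volume within the finite-dimensional space of degree-$n$ forms, uniqueness via polarization, and nonnegativity from the inductive representation after translating $K_1$ to contain the origin. So your route matches the standard proof the paper relies on; the only glossed points---that facets of a Minkowski sum decompose as sums of \emph{faces} (not necessarily facets) of the summands, and that the polynomial identity extends from the open to the closed orthant by continuity---are routine and do not affect correctness.
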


\vspace{2mm}

\begin{theorem}[Theorem 2.2 in \cite{SpeakmanAverkov18}]
\label{thm:mixedvol2}
    The \textit{mixed volume} function satisfies the following properties:
    \begin{enumerate}[label=(\roman*)]
        \item $\vol{n}{K_1}=V\left(K_1,\ldots,K_1\right)$,
        \item $V\left(t'K_1'+t''K_1'',K_2,\ldots,K_n\right)=t'V\left(K_1',K_2,\ldots,K_n\right)+t''V\left(K_1'',K_2,\ldots,K_n\right)$,
    \end{enumerate}
    for $K_1,K_2,\ldots,K_n\in\mathcal{K}^n$ and $t',t''\in\mathbb R_+$.
\end{theorem}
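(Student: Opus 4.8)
The plan is to derive both properties directly from the defining polynomial identity in Theorem \ref{thm:mixedvol}, using the homogeneity of Lebesgue measure under dilation, the permutation-invariance of $V$, and the fact that two polynomials agreeing on an open subset of $\mathbb{R}_+^n$ must have identical coefficients.

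For property (i), I would specialize Theorem \ref{thm:mixedvol} to the single-body case $m=1$. The expansion collapses to $\vol{n}{t_1 K_1} = t_1^n V(K_1,\ldots,K_1)$, since every index $i_j$ equals $1$. On the other hand, Lebesgue measure in $\mathbb{R}^n$ is homogeneous of degree $n$ under scaling, so $\vol{n}{t_1 K_1} = t_1^n \vol{n}{K_1}$. Equating the two expressions and evaluating at $t_1 = 1$ (equivalently, cancelling the common factor $t_1^n$) yields $\vol{n}{K_1} = V(K_1,\ldots,K_1)$.

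For property (ii), write $K_1 \coloneq t'K_1' + t''K_1''$ and introduce auxiliary scalars $s_1,\ldots,s_n \in \mathbb{R}_+$. The key algebraic observation is the distributive law for convex bodies, $s_1 K_1 = (s_1 t')K_1' + (s_1 t'')K_1''$, which lets me express a single Minkowski combination in two ways,
$$ s_1 K_1 + s_2 K_2 + \cdots + s_n K_n = (s_1 t')K_1' + (s_1 t'')K_1'' + s_2 K_2 + \cdots + s_n K_n. $$
Applying Theorem \ref{thm:mixedvol} to the left-hand expression (the $n$ bodies $K_1,\ldots,K_n$) and to the right-hand expression (the $n+1$ bodies $K_1',K_1'',K_2,\ldots,K_n$) produces two polynomial expansions in $s_1,\ldots,s_n$ representing the same volume. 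I would then compare the coefficient of the squarefree monomial $s_1 s_2 \cdots s_n$ on each side. On the left, the contributing index sequences are exactly the permutations of $(1,2,\ldots,n)$, so permutation-invariance gives coefficient $n!\,V(K_1,K_2,\ldots,K_n)$. On the right, a term contributes to $s_1 s_2 \cdots s_n$ precisely when exactly one factor comes from $\{K_1',K_1''\}$ (supplying the single power of $s_1$, weighted by $t'$ or $t''$) and the remaining factors permute $\{K_2,\ldots,K_n\}$; counting these ($n!$ in each case) and again using permutation-invariance gives coefficient $n!\,\big(t' V(K_1',K_2,\ldots,K_n) + t'' V(K_1'',K_2,\ldots,K_n)\big)$. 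Dividing through by $n!$ gives the claim.

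The routine parts are the volume-scaling identity and the two applications of Theorem \ref{thm:mixedvol}. The main obstacle is the combinatorial bookkeeping in the coefficient comparison: I must track which index sequences feed the monomial $s_1 \cdots s_n$, verify that the multiplicities ($n!$ on each side) agree after invoking permutation-invariance, and justify that equality of the two expansions as functions on $\mathbb{R}_+^n$ forces equality of the individual coefficients (a polynomial vanishing on a set with nonempty interior is identically zero).
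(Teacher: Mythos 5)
Your proof is correct, but note that there is nothing in the paper to compare it against: Theorem~\ref{thm:mixedvol2} is quoted as background from \cite{SpeakmanAverkov18}, and the paper explicitly omits all proofs in Section~\ref{sec:background}, deferring to \cite{SpeakmanAverkov18} and \cite{Schneider13}. Your argument is precisely the standard derivation found in those references: (i) follows by specializing the defining expansion of Theorem~\ref{thm:mixedvol} to $m=1$, and (ii) follows by expanding $\vol{n}{s_1(t'K_1'+t''K_1'')+s_2K_2+\cdots+s_nK_n}$ once over the $n$ bodies $K_1,\ldots,K_n$ and once over the $n+1$ bodies $K_1',K_1'',K_2,\ldots,K_n$, then matching the coefficient of the squarefree monomial $s_1s_2\cdots s_n$; your counts are right ($n!$ permutations on the left, and $n\cdot(n-1)!=n!$ index sequences for each of $K_1'$ and $K_1''$ on the right, with permutation-invariance collapsing each family to a single mixed volume), and the coefficient comparison is legitimate because the two polynomials agree on $\mathbb{R}_+^n$, a set with nonempty interior. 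One minor simplification: in (i) the homogeneity of Lebesgue measure is superfluous, since setting $t_1=1$ in the $m=1$ expansion yields $\vol{n}{K_1}=V(K_1,\ldots,K_1)$ directly.
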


\vspace{2mm}

\begin{definition}[Support function]\label{def:support function}
    For a nonempty compact set $K\subseteq\mathbb R^n$, the support function $h_K:\mathbb R^n\rightarrow \mathbb R$ is defined by $$h_K(\mathbf{u}) = \sup_{\mathbf{x}\in K}\mathbf{x}^T\mathbf{u}.$$
\end{definition}

\vspace{2mm}

\begin{definition}[Set of outer facet normals]\label{def:outerfacetnromals}
    For a full-dimensional polytope, $P\subseteq \mathbb R^n$,
    \begin{align*}
    \mathcal{U}(P) \coloneq &\text{ the set of all outer facet normals, $\mathbf{u}$, such that the length of $\mathbf{u}$ equals} \\ &\text{ the $(n-1)$-dimensional volume of the corresponding facet.}
    \end{align*}

\end{definition}

\vspace{2mm}

When $n=3$ and $P\subseteq\mathbb{R}^3$ is a tetrahedron, we may explicitly derive its set of outer facet normals using the following result. 

\begin{lemma}[Lemma 2.7 in \cite{SpeakmanAverkov18}]\label{lemma:outerfacetnormals}
Let $M$ be a $4 \times 3$ matrix of the following form:
$$M \coloneq \begin{bmatrix}\mathbf{\Balpha} & \mathbf{\Bbeta} & \mathbf{\Bgamma} \\ 1 & 1 & 1 \end{bmatrix} = \begin{bmatrix}\alpha_1 & \beta_1 & \gamma_1 \\ \alpha_2 & \beta_2 & \gamma_2 \\\alpha_3 & \beta_3 & \gamma_3 \\1 & 1 & 1 \end{bmatrix},$$
 and define $D_i(\mathbf{\Balpha}, \mathbf{\Bbeta},\mathbf{\Bgamma})$ to be the determinant of the $3\times3$ matrix obtained by deleting the $i$-th row, $\begin{bmatrix}\alpha_i & \beta_i & \gamma_i\end{bmatrix}$, from $M$.   Let $T$ be a tetrahedron in $\mathbb R^3$, with vertices $\Balpha,\ \Bbeta,\ \Bgamma$, and $\Bdelta$, and without loss of generality, label the vertices such that
\begin{equation*}
      \text{det}\left[\begin{array}{cccc}
      \Balpha & \Bbeta & \Bgamma & \Bdelta\\
       1 & 1 & 1 & 1 
  \end{array}\right]> 0.\end{equation*}
  Then $$\mathcal{U}(T) = \frac{1}{2}\left\{ \left[\begin{array}{c}
       D_1(\Balpha,\Bbeta,\Bgamma)  \\
       -D_2(\Balpha,\Bbeta,\Bgamma) \\
       D_3(\Balpha,\Bbeta,\Bgamma)
  \end{array}\right], 
  \left[\begin{array}{c}
       -D_1(\Bdelta,\Balpha,\Bbeta)  \\
       D_2(\Bdelta,\Balpha,\Bbeta) \\
       -D_3(\Bdelta,\Balpha,\Bbeta)
  \end{array}\right], 
  \left[\begin{array}{c}
       D_1(\Bgamma,\Bdelta,\Balpha)  \\
       -D_2(\Bgamma,\Bdelta,\Balpha) \\
       D_3(\Bgamma,\Bdelta,\Balpha)
  \end{array}\right] ,  
  \left[\begin{array}{c}
       -D_1(\Bbeta,\Bgamma,\Bdelta)  \\
       D_2(\Bbeta,\Bgamma,\Bdelta) \\
       -D_3(\Bbeta,\Bgamma,\Bdelta)
  \end{array}\right] \right\}.$$
\end{lemma}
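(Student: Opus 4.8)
The plan is to treat each of the four facets of $T$ as a triangle spanned by three of the vertices and to use the elementary fact that, for a triangle with vertices $\mathbf p,\mathbf q,\mathbf r$, the vector $\tfrac12(\mathbf q-\mathbf p)\times(\mathbf r-\mathbf p)$ is orthogonal to the affine hull of the triangle and has Euclidean length equal to the triangle's area. By the definition of $\mathcal U(T)$, it then suffices to show that each of the four listed vectors equals $\pm\tfrac12(\mathbf q-\mathbf p)\times(\mathbf r-\mathbf p)$ for the appropriate facet, with the sign chosen so that the normal points \emph{outward}. Thus the argument splits into two parts: an algebraic identity rewriting such a cross product in terms of the minors $D_i$, and an orientation check that fixes the signs using the hypothesis on the $4\times4$ determinant.

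For the algebraic identity I would start from $(\mathbf q-\mathbf p)\times(\mathbf r-\mathbf p)=\mathbf p\times\mathbf q+\mathbf q\times\mathbf r+\mathbf r\times\mathbf p$ and expand each $D_i(\mathbf p,\mathbf q,\mathbf r)$ by cofactors along the all-ones bottom row of $M$. Comparing coordinates then gives
\[
(\mathbf q-\mathbf p)\times(\mathbf r-\mathbf p)=\begin{bmatrix} D_1(\mathbf p,\mathbf q,\mathbf r)\\ -D_2(\mathbf p,\mathbf q,\mathbf r)\\ D_3(\mathbf p,\mathbf q,\mathbf r)\end{bmatrix}.
\]
This matches the first and third vectors of the lemma exactly (their sign pattern is $+,-,+$), applied to the orderings $(\Balpha,\Bbeta,\Bgamma)$ and $(\Bgamma,\Bdelta,\Balpha)$; the second and fourth vectors carry the opposite pattern $-,+,-$ and are therefore the \emph{negatives} of the same expression applied to $(\Bdelta,\Balpha,\Bbeta)$ and $(\Bbeta,\Bgamma,\Bdelta)$. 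Hence all four vectors are $\pm\tfrac12$ times a cross product of two edges of the corresponding facet, which settles orthogonality and the length normalization and reduces everything to the choice of sign.

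It remains to verify that each sign is the outward one. For the facet on $\{\Balpha,\Bbeta,\Bgamma\}$, opposite $\Bdelta$, the candidate $\mathbf n=\tfrac12(\Bbeta-\Balpha)\times(\Bgamma-\Balpha)$ is outward precisely when it points away from the remaining vertex, i.e. when $\mathbf n\cdot(\Bdelta-\Balpha)<0$. Writing the left side as the scalar triple product $\tfrac12\det[\,\Bbeta-\Balpha\mid\Bgamma-\Balpha\mid\Bdelta-\Balpha\,]$ and reducing the $4\times4$ determinant of the hypothesis by the column operations that subtract the $\Balpha$-column from the others, one finds the two determinants differ only by a factor of $-1$; the assumption that the $4\times4$ determinant is positive then forces $\mathbf n\cdot(\Bdelta-\Balpha)<0$, so the first listed vector is the outward normal. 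The other three facets are handled identically: form the triple product of the two spanning edges against the edge to the opposite vertex and reduce it to the same $4\times4$ determinant. The sign flip in the second and fourth vectors is exactly what keeps those normals outward; alternatively, since even permutations of the vertices preserve the sign of the $4\times4$ determinant and act transitively on the facets, the remaining cases can be reduced to the first, though one must match the listed vertex orderings to keep the bookkeeping honest. As a consistency check one may also confirm that the four vectors sum to $\mathbf 0$, as Minkowski's relation requires.

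I expect the only real difficulty to be bookkeeping rather than ideas: three sign conventions interact---the cofactor signs hidden in the $D_i$, the orientation of each cross product relative to its chosen vertex ordering, and the outward test against the opposite vertex---and they must be tracked simultaneously so that all four vectors emerge with precisely the displayed signs. Organizing the computation around the single coordinate identity above and the single orientation reduction for one facet is what prevents it from degenerating into four near-duplicate determinant expansions.
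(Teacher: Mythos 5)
Your argument is correct: the identity $(\mathbf q-\mathbf p)\times(\mathbf r-\mathbf p)=\bigl(D_1(\mathbf p,\mathbf q,\mathbf r),\,-D_2(\mathbf p,\mathbf q,\mathbf r),\,D_3(\mathbf p,\mathbf q,\mathbf r)\bigr)$ checks out by the cofactor expansion you describe, and the orientation step is sound --- subtracting the $\Balpha$-column and expanding along the all-ones row gives $\det\bigl[\begin{smallmatrix}\Balpha & \Bbeta & \Bgamma & \Bdelta\\ 1&1&1&1\end{smallmatrix}\bigr]=-\det[\,\Bbeta-\Balpha\mid\Bgamma-\Balpha\mid\Bdelta-\Balpha\,]$, so positivity of the hypothesis forces $\mathbf n\cdot(\Bdelta-\Balpha)<0$, and the alternating signs on the four listed vectors are exactly what the parities of the vertex orderings $(\Bdelta,\Balpha,\Bbeta)$, $(\Bgamma,\Bdelta,\Balpha)$, $(\Bbeta,\Bgamma,\Bdelta)$ (odd, even, odd relative to $(\Balpha,\Bbeta,\Bgamma,\Bdelta)$) require for outwardness. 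One caveat on the comparison itself: this paper does not prove the lemma --- it is imported verbatim from \cite{SpeakmanAverkov18} with proofs explicitly omitted --- and the proof in that reference is the same standard cross-product/cofactor computation you outline, so your writeup would serve as a complete, self-contained substitute.
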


\vspace{2mm}

\begin{theorem}[Theorem 2.6 in \cite{SpeakmanAverkov18}]
\label{thm:polytopemixedvol}
     Let $P\subseteq \mathbb R^n$ be a full-dimensional polytope and let $K$ be a nonempty compact set in $\mathbb R^n$. Then the {mixed volume} of $(n-1)$ copies of $P$ and one copy of $K$ is given by: $$V(P,\ldots,P,K)=\frac{1}{n}\sum_{\mathbf{u}\in\mathcal{U}(P)}h_K(\mathbf{u}).$$
\end{theorem}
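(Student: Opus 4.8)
The plan is to compute the volume of the Minkowski combination $P + tK$ in two different ways and to match the linear term in $t$. First, applying Theorem~\ref{thm:mixedvol} with $m = 2$, $K_1 = P$, $K_2 = K$, $t_1 = 1$, and $t_2 = t \ge 0$, and grouping the index tuples $(i_1,\dots,i_n)\in\{1,2\}^n$ by how many indices equal $2$, the permutation invariance of $V$ gives the polynomial expansion
\begin{equation*}
\vol{n}{P + tK} = \sum_{k=0}^{n} \binom{n}{k}\, V(\underbrace{P,\ldots,P}_{n-k},\underbrace{K,\ldots,K}_{k})\, t^{k}.
\end{equation*}
The coefficient of $t^1$ is exactly $\binom{n}{1} V(P,\ldots,P,K) = n\,V(P,\ldots,P,K)$, so it suffices to show that the linear term of $\vol{n}{P+tK}$ equals $\sum_{\mathbf{u}\in\mathcal{U}(P)} h_K(\mathbf{u})$.

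Second, I would compute this linear term geometrically. Let $F_1,\ldots,F_m$ be the facets of $P$ with unit outer normals $\boldsymbol{\nu}_1,\ldots,\boldsymbol{\nu}_m$, and set $\mathbf{u}_j \coloneq \vol{n-1}{F_j}\,\boldsymbol{\nu}_j$, so that $\mathcal{U}(P) = \{\mathbf{u}_1,\ldots,\mathbf{u}_m\}$. For small $t > 0$ the body $P + tK$ is obtained from $P$ by appending, over each facet $F_j$, a slab whose base is a translate of $F_j$ and whose height in the direction $\boldsymbol{\nu}_j$ is, to first order, $t\,h_K(\boldsymbol{\nu}_j)$ — the support function measures precisely how far $tK$ protrudes beyond the supporting hyperplane of $F_j$. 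Each such slab has $n$-volume $\vol{n-1}{F_j}\, t\, h_K(\boldsymbol{\nu}_j) + O(t^2)$, while the regions accumulating near faces of dimension at most $n-2$ contribute only $O(t^2)$. Hence
\begin{equation*}
\vol{n}{P + tK} = \vol{n}{P} + t \sum_{j=1}^{m} \vol{n-1}{F_j}\, h_K(\boldsymbol{\nu}_j) + O(t^2).
\end{equation*}

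Matching the coefficients of $t$ from the two expansions yields $n\,V(P,\ldots,P,K) = \sum_{j} \vol{n-1}{F_j}\, h_K(\boldsymbol{\nu}_j)$. Finally, by positive homogeneity of the support function, $\vol{n-1}{F_j}\, h_K(\boldsymbol{\nu}_j) = h_K\!\left(\vol{n-1}{F_j}\,\boldsymbol{\nu}_j\right) = h_K(\mathbf{u}_j)$, so the right-hand side is exactly $\sum_{\mathbf{u}\in\mathcal{U}(P)} h_K(\mathbf{u})$; dividing by $n$ gives the claim.

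The main obstacle is making the first-order geometric expansion rigorous — specifically, verifying that the slabs over distinct facets are essentially disjoint for small $t$ and that all contributions near lower-dimensional faces are genuinely $O(t^2)$, so that differentiating at $t = 0^+$ isolates the claimed linear coefficient. A clean way to handle this is to first establish the identity when $K$ is itself a polytope, where $P + tK$ has an explicit facet structure and its volume is manifestly a polynomial in $t$ whose linear coefficient can be read off directly, and then to extend to arbitrary compact convex $K$ by approximation, using continuity of both mixed volumes and support functions. I would also note the minor technical point that although the statement allows a general compact $K$, one may replace $K$ by $\conv(K)$ throughout, since $h_K = h_{\conv(K)}$ and the mixed volume depends on $K$ only through its convex hull.
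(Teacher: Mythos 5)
The paper offers no internal proof to compare against: this theorem is imported as background from \cite{SpeakmanAverkov18}, with proofs explicitly omitted and deferred to that paper and to \cite{Schneider13}. Judged on its own, your argument is correct and is in fact the standard proof of this special mixed-volume formula. Expanding $\vol{n}{P+tK}$ via Theorem~\ref{thm:mixedvol} with $m=2$ and using permutation invariance to group tuples gives the binomial polynomial whose linear coefficient is $n\,V(P,\ldots,P,K)$, and the first-variation computation identifies that same coefficient with $\sum_j \vol{n-1}{F_j}\,h_K(\boldsymbol{\nu}_j)$, with positive homogeneity of $h_K$ folding the facet areas into the scaled normals of $\mathcal{U}(P)$. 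Your plan for making the $O(t^2)$ estimate rigorous---prove the identity first for polytopal $K$, where $P+tK$ has explicit facet structure and $\vol{n}{P+tK}$ is exactly a polynomial in $t$, then pass to general convex $K$ by Hausdorff approximation using continuity of mixed volumes and of support functions---is precisely how this is handled in \cite{Schneider13}. Your closing remark that a merely compact $K$ must be read through $\conv(K)$, since $h_K = h_{\conv(K)}$ and $V$ is defined only on $\mathcal{K}^n$, is also the right patch for the statement's loose ``compact set'' hypothesis.

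One wrinkle you should repair explicitly: the slab picture as written tacitly assumes $\mathbf{0}\in K$, so that $P\subseteq P+tK$ and every first-order height $h_K(\boldsymbol{\nu}_j)$ is nonnegative. For general $K$ this fails literally---take $K=\{\mathbf{c}\}$ a single point, so that $P+tK$ is merely a translate of $P$, does not contain $P$, and some $h_K(\boldsymbol{\nu}_j)=\mathbf{c}^T\boldsymbol{\nu}_j$ are negative---so ``appending slabs over facets'' is not an honest decomposition. The fix is short but worth stating: both sides of the identity are invariant under $K\mapsto K+\mathbf{c}$, the left because mixed volumes are translation invariant in each argument, and the right because the Minkowski relation $\sum_{\mathbf{u}\in\mathcal{U}(P)}\mathbf{u}=\mathbf{0}$ for polytopes gives $\sum_{\mathbf{u}} h_{K+\mathbf{c}}(\mathbf{u}) = \sum_{\mathbf{u}} h_K(\mathbf{u}) + \mathbf{c}^T\sum_{\mathbf{u}}\mathbf{u} = \sum_{\mathbf{u}} h_K(\mathbf{u})$. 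One may therefore normalize $\mathbf{0}\in K$ before running the geometric argument (or absorb the signed bookkeeping into your polytopal-$K$ route), after which the proof is complete.
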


\section{Geometric insight} \label{sec:geometric}
The key geometric observation underlying the mixed volume approach becomes clear when viewing $\mathcal P_H^3(\mathbf{c},\Bell)$ as the convex hull of its extreme points.  

Let
 {\small \begin{align*} 
&\mathbf{v}_1 \coloneq \begin{bmatrix}
      (c_1-\ell_1)(c_2-\ell_2)(c_3-\ell_3) \\
      (c_1-\ell_1)\\
      (c_2-\ell_2)\\
      (c_3-\ell_3)
\end{bmatrix},\; \mathbf{v}_2 \coloneq\begin{bmatrix}
      (c_1+\ell_1)(c_2-\ell_2)(c_3-\ell_3) \\
      (c_1+\ell_1)\\
      (c_2-\ell_2)\\
      (c_3-\ell_3)
\end{bmatrix}, \; \mathbf{v}_3\coloneq\begin{bmatrix}
      (c_1-\ell_1)(c_2+\ell_2)(c_3-\ell_3) \\
      (c_1-\ell_1)\\
      (c_2+\ell_2)\\
      (c_3-\ell_3)
\end{bmatrix},\\[0.5em] &\mathbf{v}_4 \coloneq\begin{bmatrix}
      (c_1+\ell_1)(c_2+\ell_2)(c_3-\ell_3) \\
      (c_1+\ell_1)\\
      (c_2+\ell_2)\\
      (c_3-\ell_3)
\end{bmatrix}, \; \mathbf{v}_5 \coloneq\begin{bmatrix}
      (c_1-\ell_1)(c_2-\ell_2)(c_3+\ell_3) \\
      (c_1-\ell_1)\\
      (c_2-\ell_2)\\
      (c_3+\ell_3)
\end{bmatrix}, \; \mathbf{v}_6 \coloneq\begin{bmatrix}
      (c_1+\ell_1)(c_2-\ell_2)(c_3+\ell_3) \\
      (c_1+\ell_1)\\
      (c_2-\ell_2)\\
      (c_3+\ell_3)
\end{bmatrix}, \\[0.5em] &\mathbf{v}_7\coloneq\begin{bmatrix}
      (c_1-\ell_1)(c_2+\ell_2)(c_3+\ell_3) \\
      (c_1-\ell_1)\\
      (c_2+\ell_2)\\
      (c_3+\ell_3)
\end{bmatrix},\; \mathbf{v}_8 \coloneq\begin{bmatrix}
      (c_1+\ell_1)(c_2+\ell_2)(c_3+\ell_3) \\
      (c_1+\ell_1)\\
       (c_2+\ell_2)\\
      (c_3+\ell_3)
\end{bmatrix},\end{align*}}
then $\mathcal P_H^3(\mathbf{c},\Bell) = \conv\left \{\mathbf{v}_1,\mathbf{v}_2,\dots,\mathbf{v}_8 \right \}$.

Observe that for any variable $x_i$ with $i\in \{1,2,3\}$, we can partition the vertices of $\mathcal P_H^3(\mathbf{c},\Bell)$ into two sets: four vertices lying in the hyperplane $x_i=c_i-\ell_i$, and four vertices lying in the hyperplane $x_i=c_i+\ell_i$. Without loss of generality, we choose $i=3$ and denote the convex hulls of these resulting vertex sets by $Q$ and $R$, respectively. See Figure \ref{fig:cube} for an illustration.  Formally,
$$Q\coloneq\conv\{\mathbf{v}_1,\mathbf{v}_2,\mathbf{v}_3,\mathbf{v}_4\} \quad \text{and} \quad R \coloneq\conv\{\mathbf{v}_5,\mathbf{v}_6,\mathbf{v}_7,\mathbf{v}_8\}.$$
\begin{figure}
    \centering
    \includegraphics[width=0.42\linewidth]{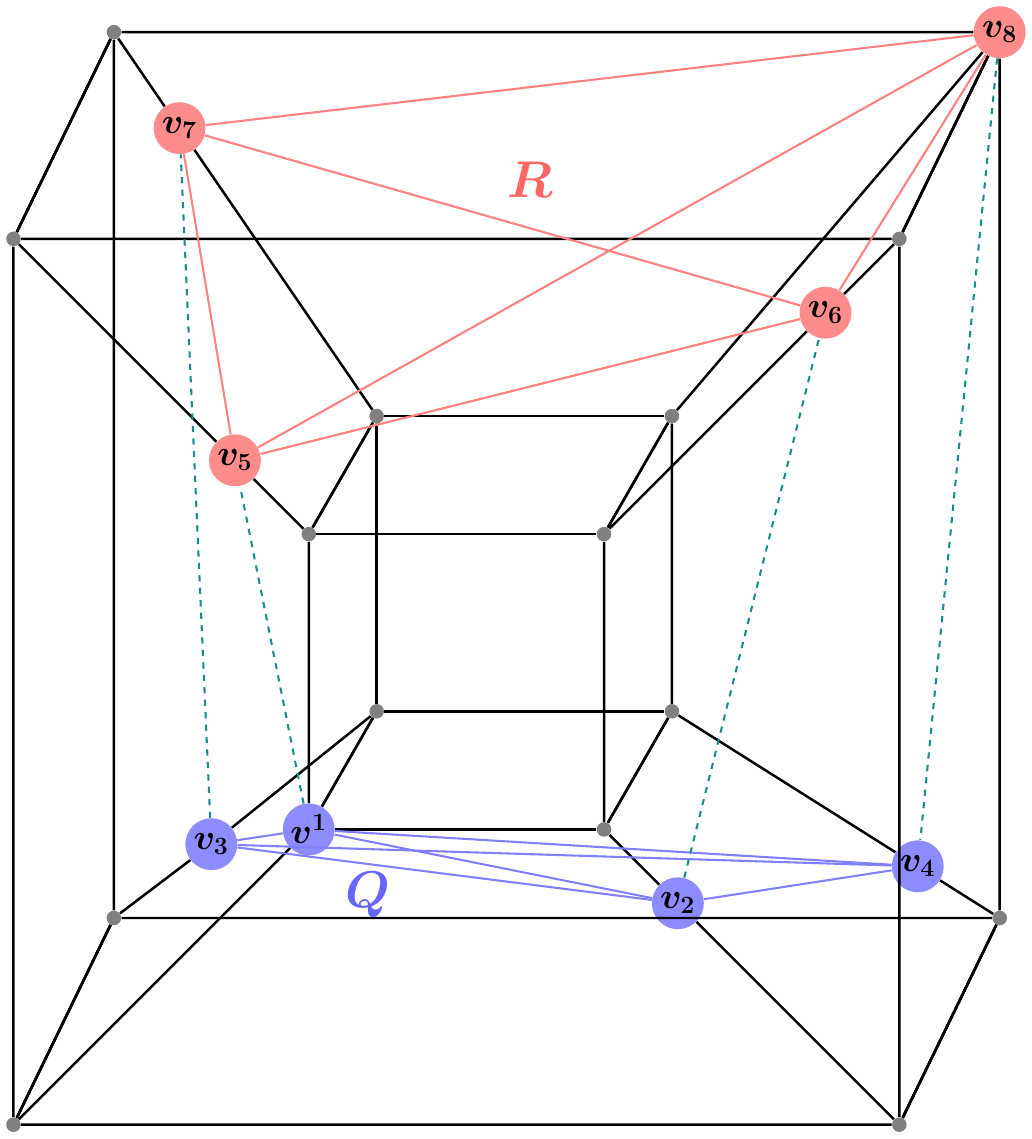}
    \caption{Adapted from \cite{SpeakmanAverkov18}.  Schlegel diagram of $\mathcal{P}_H^3(\mathbf{c}, \mathbf{\ell})$, showing its embedding in the 4-dimensional hypercube.  Note that $\mathcal{P}_H^3(\mathbf{c}, \mathbf{\ell}) = \conv{(Q\cup R)}$.}
    \vspace{-2mm}
    \label{fig:cube}
\end{figure}
By viewing $\mathcal{P}_H^3(\mathbf{c,\Bell})$ as the convex hull of the union of $Q$ and $R$, that is, two ($n-1=3$)-dimensional polytopes lying in parallel hyperplanes, its volume can be expressed as an integral over $x_3$:
\[
\vol{4}{\mathcal{P}_H^3(\mathbf{c,\Bell})} = \int_{x_3=c_3-\ell_3}^{x_3=c_3+\ell_3} \vol{3}{\frac{c_3+\ell_3 - x_3}{(c_3+\ell_3) - (c_3-\ell_3)} Q + \frac{x_3 - (c_3-\ell_3)}{(c_3+\ell_3) - (c_3-\ell_3)} R}\ dx_3.
\]
Note that at $x_3 = c_3-\ell_3$, the argument inside the volume function reduces to $Q$, while at $x_3 = c_3+\ell_3$, it reduces to $R$.  At intermediate values, it is a Minkowski sum of scaled copies of $Q$ and $R$.

Applying Theorems \ref{thm:mixedvol} and \ref{thm:mixedvol2} with $n = 3$, we obtain
\begin{align}
&\vol{4}{\mathcal{P}_H^3(\mathbf{c,\Bell})} = \int_{x_3=c_3-\ell_3}^{x_3=c_3+\ell_3}\vol{3}{\frac{(c_3+\ell_3) - x_3}{2\ell_3} Q + \frac{x_3 - (c_3-\ell_3)}{2\ell_3} R}\ dx_3 \notag\\[1.5em]
&\quad   = \int_{c_3-\ell_3}^{c_3+\ell_3}\left (\frac{c_3+\ell_3 - x_3}{2\ell_3}\right )^3 V(Q,Q,Q) 
+ 3\left (\frac{c_3+\ell_3 - x_3}{2\ell_3}\right)^2\left (\frac{x_3 - c_3+\ell_3}{2\ell_3}\right ) V(Q, Q, R)\notag\\[0.5em]
&\qquad \qquad \;\;  + 3\left (\frac{c_3+\ell_3 - x_3}{2\ell_3}\right )\left (\frac{x_3 - c_3+\ell_3}{2\ell_3}\right )^2 V(Q, R, R) + \left(\frac{x_3 - c_3+\ell_3}{2\ell_3}\right)^3 V(R,R,R) \ dx_3 \notag\\[1.5em]
&\quad \;\; = \frac{1}{(2\ell_3)^3}\int_{c_3-\ell_3}^{c_3+\ell_3}(c_3+\ell_3-x_3)^3\vol{3}{Q}+3(c_3+\ell_3-x_3)^2(x_3-c_3+\ell_3)V(Q,Q,R)\notag\\[0.5em]
&\qquad \qquad \;\; +3(c_3+\ell_3-x_3)(x_3-c_3+\ell_3)^2V(Q,R,R)+(x_3-c_3+\ell_3)^3\vol{3}{R}\ dx_3.\label{eq:integral}
\end{align}

Therefore, to obtain the volume of $\mathcal{P}_H^3(\mathbf{c,\Bell})$, we compute the volumes of $Q$ and $R$, and the mixed volumes $V(Q,Q,R)$ and $V(Q,R,R)$, then evaluate the integral in Equation \eqref{eq:integral}.  This is exactly the approach taken by \cite{SpeakmanAverkov18} to obtain the volume formula for the case of nonnegative bounds; however, they express \eqref{eq:integral} in terms of the upper and lower variable bounds. Under their assumptions, no case analysis was required when computing the volumes and mixed volumes, ultimately yielding a single closed-form volume formula for the convex hull polytope.  In contrast, removing the nonnegativity assumption necessitates extensive case analysis, even under the ordering \eqref{eq:omegaprime}. 

\section{Preliminary results}
\label{sec:prelim}
To reduce the case analysis required, we begin with the following symmetry observation.
\begin{lemma}\label{force_positivity}
    For $i=1,2,3$, let $\tilde{c}_i=|c_i|$.  Then 
    $$\vol{4}{\mathcal{P}_H^3(\mathbf{c},\Bell)} = \vol{4}{\mathcal{P}_H^3(\tilde{\mathbf{c}},\Bell)}.$$
\end{lemma}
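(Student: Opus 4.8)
The plan is to exhibit an explicit volume-preserving linear isomorphism of $\mathbb{R}^4$ that carries $\mathcal{P}_H^3(\mathbf{c},\Bell)$ onto $\mathcal{P}_H^3(\tilde{\mathbf{c}},\Bell)$. For each $i$, set $\sigma_i \coloneq 1$ if $c_i \geq 0$ and $\sigma_i \coloneq -1$ if $c_i < 0$, so that $\sigma_i c_i = |c_i| = \tilde{c}_i$ in every case. Working in the coordinates $(f, x_1, x_2, x_3)$, I would introduce the diagonal linear map
\[
\Phi(f, x_1, x_2, x_3) \coloneq \left(\sigma_1\sigma_2\sigma_3\, f,\; \sigma_1 x_1,\; \sigma_2 x_2,\; \sigma_3 x_3\right).
\]
The sign placed on the $f$-coordinate is forced: reflecting a subset of the variables multiplies their product by the product of the corresponding signs, so $\Phi$ is precisely the map that keeps the graph of the monomial invariant under the variable reflections.

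First I would check that $\Phi$ preserves $4$-dimensional volume. Since $\Phi$ is diagonal with diagonal entries $\sigma_1\sigma_2\sigma_3,\ \sigma_1,\ \sigma_2,\ \sigma_3$, its determinant equals $(\sigma_1\sigma_2\sigma_3)\,\sigma_1\sigma_2\sigma_3 = \sigma_1^2\sigma_2^2\sigma_3^2 = 1$. Hence $|\det\Phi| = 1$, so $\Phi$ preserves $4$-dimensional Lebesgue measure; in particular $\vol{4}{\Phi(K)} = \vol{4}{K}$ for every measurable $K \subseteq \mathbb{R}^4$.

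Next I would verify that $\Phi$ maps the generating set of $\mathcal{P}_H^3(\mathbf{c},\Bell)$ bijectively onto that of $\mathcal{P}_H^3(\tilde{\mathbf{c}},\Bell)$. Take a generating point $(x_1 x_2 x_3, x_1, x_2, x_3)$ with $x_i \in [c_i - \ell_i, c_i + \ell_i]$, and set $y_i \coloneq \sigma_i x_i$. Then the $f$-coordinate of the image is $\sigma_1\sigma_2\sigma_3\, x_1 x_2 x_3 = y_1 y_2 y_3$, so the image still lies on the graph of the monomial. Moreover $y_i \in \sigma_i[c_i - \ell_i, c_i + \ell_i] = [\tilde{c}_i - \ell_i, \tilde{c}_i + \ell_i]$, using $\sigma_i c_i = \tilde{c}_i$ and $\sigma_i^2 = 1$ (the interval is merely reflected about the origin when $\sigma_i = -1$, leaving the half-length $\ell_i$ unchanged). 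Thus the image is a generating point of $\mathcal{P}_H^3(\tilde{\mathbf{c}},\Bell)$, and the analogous inverse map shows the correspondence is a bijection.

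Finally, since $\Phi$ is linear, hence affine, it commutes with the convex-hull operation, so $\Phi\!\left(\mathcal{P}_H^3(\mathbf{c},\Bell)\right) = \mathcal{P}_H^3(\tilde{\mathbf{c}},\Bell)$; combining this with volume-preservation yields the stated equality. I do not anticipate a genuine obstacle: the only point requiring care is the bookkeeping on the $f$-coordinate, namely recognizing that reflecting the variables forces the product sign $\sigma_1\sigma_2\sigma_3$ onto $f$, which is exactly what simultaneously keeps $\Phi$ on the graph of the monomial and makes $|\det\Phi| = 1$.
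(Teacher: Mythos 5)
Your proposal is correct and takes essentially the same approach as the paper: the identical diagonal sign-flip map $\phi(f,x_1,x_2,x_3)=(\sigma_1\sigma_2\sigma_3 f,\sigma_1 x_1,\sigma_2 x_2,\sigma_3 x_3)$, the same determinant-equals-one volume argument, and the same interval verification. If anything, your version is marginally more careful in one bookkeeping detail---you apply the map to the generating set (the graph points) and then invoke that linear maps commute with $\conv(\cdot)$, whereas the paper applies $\phi$ directly to points of the hull as though they all satisfied $f=x_1x_2x_3$.
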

\begin{proof}
For each $i \in \{1, 2, 3\}$, let $\sigma_i = \mathrm{sign}(c_i) \in \{-1, +1\}$, and consider the linear map $\phi: \mathbb{R}^4 \to \mathbb{R}^4$ defined by
$$
\phi(f, x_1, x_2, x_3) \coloneq (\sigma_1 \sigma_2 \sigma_3 \cdot f, \, \sigma_1 x_1, \, \sigma_2 x_2, \, \sigma_3 x_3),
$$
with matrix representation
$$D_\phi = \begin{bmatrix}
\sigma_1\sigma_2\sigma_3 & 0 & 0 & 0 \\
0 & \sigma_1 & 0 & 0 \\
0 & 0 & \sigma_2 & 0 \\
0 & 0 & 0 & \sigma_3
\end{bmatrix}.$$

\noindent Note that because each $\sigma_i \in \{-1, +1\}$, we have $\det(D_\phi) = \sigma_1^2 \sigma_2^2 \sigma_3^2 = 1$, thus the mapping $\phi$ preserves volume.  Moreover, we claim $\phi$ is a bijection from $\mathcal{P}^3_H(\mathbf{c}, \boldsymbol{\ell})$ to $\mathcal{P}^3_H(\tilde{\mathbf{c}}, \boldsymbol{\ell})$. 

To see this, let $(f, x_1, x_2, x_3) \in \mathcal{P}^3_H(\mathbf{c}, \boldsymbol{\ell})$, so that $f = x_1 x_2 x_3$ and 
$x_i \in [c_i - \ell_i, c_i + \ell_i]$ for $i=1,2,3$. Setting $(\tilde{f}, \tilde{x}_1, \tilde{x}_2, \tilde{x}_3) = \phi(f, x_1, x_2, x_3)$, we verify:
\begin{enumerate}[label=(\roman*)]
    \item The constraint $\tilde{f} = \tilde{x}_1 \tilde{x}_2 \tilde{x}_3$ holds:
    $$ \tilde{f} = \sigma_1\sigma_2\sigma_3 \cdot f = \sigma_1\sigma_2\sigma_3(x_1 x_2 x_3) = (\sigma_1 x_1)(\sigma_2 x_2)(\sigma_3 x_3) = \tilde{x}_1 \tilde{x}_2 \tilde{x}_3.$$    
    \item For $i=1,2,3$, we have $\tilde{x}_i = \sigma_i x_i \in [|c_i| - \ell_i, |c_i| + \ell_i]$:
    \begin{itemize}   
    \item If $c_i \geq 0$, then $\sigma_i = 1$ and $\tilde{x}_i = x_i \in [c_i - \ell_i, c_i + \ell_i] = [|c_i| - \ell_i, |c_i| + \ell_i]$. 
   \item  If $c_i < 0$, then $\sigma_i = -1$ and since $x_i \in [c_i - \ell_i, c_i + \ell_i]$, we have
    \[
    \tilde{x}_i = -x_i \in [-(c_i + \ell_i), -(c_i - \ell_i)] = [-c_i - \ell_i, -c_i + \ell_i] = [|c_i| - \ell_i, |c_i| + \ell_i].
    \]
    \end{itemize}
\end{enumerate}
Therefore, $\phi$ maps $\mathcal{P}^3_H(\mathbf{c}, \boldsymbol{\ell})$ into $\mathcal{P}^3_H(\tilde{\mathbf{c}}, \boldsymbol{\ell})$, and because it is invertible with $\phi^{-1} = \phi$, it is a bijection. Thus, the mapping $\phi$ is a volume-preserving bijection and
$$\vol{4}{\mathcal{P}_H^3(\mathbf{c},\Bell)} = \vol{4}{\mathcal{P}_H^3(\tilde{\mathbf{c}},\Bell)}$$
as required.   
\end{proof}

For the remainder of the paper, without loss of generality, we assume $c_i\geq 0$ for $i=1,2,3$.
\subsection{Computing the Volumes of ${Q}$ and ${R}$}
To compute the (3-dimensional) volumes of $Q$ and $R$, we simply need to observe that they are both 3-dimensional simplices, in other words, tetrahedra.  The volume of each may be computed using the standard determinant formula for simplices.
\begin{lemma}\label{volQ}
The volume of $Q$ is given by
    $$\vol{3}{Q}=\begin{cases}
    \frac{8}{3}\ell_1^2\ell_2^2\left(c_3-\ell_3\right), &\text{if } \frac{c_3}{\ell_3} \geq 1, \\
    \frac{8}{3}\ell_1^2\ell_2^2\left(\ell_3-c_3\right), &\text{if } \frac{c_3}{\ell_3} < 1. 
    \end{cases}$$
\end{lemma}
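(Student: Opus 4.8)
The plan is to recognize $Q$ as a nondegenerate tetrahedron confined to a single coordinate hyperplane and to compute its $3$-dimensional volume directly from the simplex determinant formula. First I would observe that all four defining vertices $\mathbf{v}_1,\dots,\mathbf{v}_4$ share the common fourth coordinate $x_3 = c_3 - \ell_3$, so $Q$ lies in the hyperplane $\{x_3 = c_3-\ell_3\}$. Because this hyperplane is a translate of a coordinate subspace, orthogonal projection onto the $(f,x_1,x_2)$-coordinates acts isometrically on it; hence $\vol{3}{Q}$ equals the $3$-dimensional volume of the projected simplex $\conv\{\mathbf{w}_1,\dots,\mathbf{w}_4\}\subset\mathbb{R}^3$, where each $\mathbf{w}_i$ retains only the first three coordinates of $\mathbf{v}_i$.

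Next I would apply $\vol{3}{\conv\{\mathbf{w}_1,\dots,\mathbf{w}_4\}} = \tfrac{1}{6}\bigl|\det[\,\mathbf{w}_2-\mathbf{w}_1,\ \mathbf{w}_3-\mathbf{w}_1,\ \mathbf{w}_4-\mathbf{w}_1\,]\bigr|$. Writing out the three edge vectors (in terms of $a_i = c_i-\ell_i$, $b_i = c_i+\ell_i$, and $b_i-a_i = 2\ell_i$), the first two columns each carry a zero final entry, while the lower two entries of the third column equal the column-sum of the first two. The column operation $C_3\mapsto C_3 - C_1 - C_2$ therefore annihilates the bottom two entries of the last column, and a short simplification (substituting $b_i = c_i+\ell_i$, $a_i = c_i-\ell_i$) shows its surviving top entry collapses to $4\ell_1\ell_2(c_3-\ell_3)$. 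A single cofactor expansion down this column then reduces the determinant to a $2\times 2$ minor, yielding $\det = 16\,\ell_1^2\ell_2^2\,(c_3-\ell_3)$.

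Finally, dividing by $6$ and taking the absolute value gives $\vol{3}{Q} = \tfrac{8}{3}\,\ell_1^2\ell_2^2\,|c_3-\ell_3|$, and the two branches of the stated formula arise by resolving $|c_3-\ell_3|$ according to the sign of $c_3-\ell_3$: since $\ell_3 > 0$, we have $c_3 \ge \ell_3$ exactly when $c_3/\ell_3 \ge 1$, and otherwise $|c_3-\ell_3| = \ell_3-c_3$. I expect the only (modest) obstacle to be bookkeeping, namely correctly simplifying the top determinant entry and tracking the sign that produces the case split, rather than anything conceptual: affine independence of $\mathbf{v}_1,\dots,\mathbf{v}_4$ (guaranteed by $\ell_1,\ell_2 > 0$) ensures that $Q$ is a genuine nondegenerate tetrahedron, so no lower-dimensional degeneracies intervene. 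The same computation, with $c_3+\ell_3$ in place of $c_3-\ell_3$, will supply $\vol{3}{R}$; there, since $c_3 \ge 0$ and $\ell_3 > 0$ force $c_3+\ell_3 > 0$, no case split is needed.
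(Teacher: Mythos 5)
Your proposal is correct and takes essentially the same approach as the paper, which simply observes that $Q$ is a tetrahedron and invokes the standard determinant formula for simplices---exactly the computation you carry out in detail, and your determinant $16\,\ell_1^2\ell_2^2(c_3-\ell_3)$ and the resolution of $|c_3-\ell_3|$ into the two branches are right. One harmless slip: at $c_3=\ell_3$ the simplex degenerates (so $\ell_1,\ell_2>0$ alone do not guarantee affine independence of $\mathbf{v}_1,\dots,\mathbf{v}_4$), but the formula $\tfrac{1}{6}|\det|$ still returns the correct volume $0$ in that boundary case.
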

\begin{lemma}\label{volR}
The volume of $R$ is given by
    $$\vol{3}{R}=\frac{8}{3}\ell_1^2\ell_2^2(c_3+\ell_3).$$
\end{lemma}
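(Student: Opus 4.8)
The plan is to recognize $R$ as a tetrahedron lying in the affine hyperplane $\{x_3 = c_3+\ell_3\}\subset\mathbb R^4$ and to compute its $3$-dimensional volume directly via the determinant formula for a simplex, as suggested in the paragraph preceding the lemma.

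First I would observe that the four vertices $\mathbf v_5,\dots,\mathbf v_8$ share the common last coordinate $x_3 = c_3+\ell_3$. Because the hyperplane $\{x_3 = c_3+\ell_3\}$ is a translate of a coordinate subspace, the projection $\pi:(f,x_1,x_2,x_3)\mapsto(f,x_1,x_2)$ restricts to an isometry of this hyperplane onto $\mathbb R^3$ and therefore preserves $3$-dimensional Lebesgue measure. Hence $\vol{3}{R}$ equals the volume of the tetrahedron with vertices $\pi(\mathbf v_5),\dots,\pi(\mathbf v_8)$ in $\mathbb R^3$.

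Next, writing $a=c_1-\ell_1$, $b=c_1+\ell_1$, $p=c_2-\ell_2$, $q=c_2+\ell_2$, and $s=c_3+\ell_3$, the projected vertices are $(aps,a,p)$, $(bps,b,p)$, $(aqs,a,q)$, $(bqs,b,q)$, and I would apply
$$\vol{3}{R} = \tfrac16\bigl|\det[\,\pi(\mathbf v_6)-\pi(\mathbf v_5),\ \pi(\mathbf v_7)-\pi(\mathbf v_5),\ \pi(\mathbf v_8)-\pi(\mathbf v_5)\,]\bigr|.$$
Using $b-a=2\ell_1$ and $q-p=2\ell_2$, the first two difference vectors become $(2\ell_1 p\,s,\,2\ell_1,\,0)$ and $(2\ell_2 a\,s,\,0,\,2\ell_2)$; factoring $2\ell_1$ and $2\ell_2$ out of these and expanding the residual determinant along the $f$-column, the cross terms in $c_1$ and $c_2$ cancel and the residual evaluates to $4\ell_1\ell_2\,s$. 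Combined with the $4\ell_1\ell_2$ prefactor this gives $|\det| = 16\,\ell_1^2\ell_2^2\,s$, whence $\vol{3}{R} = \tfrac83\,\ell_1^2\ell_2^2(c_3+\ell_3)$.

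Finally, I would note the structural point that distinguishes this lemma from Lemma~\ref{volQ}: the determinant carries the factor $s=c_3+\ell_3$, and since we assume $c_3\ge 0$ and $\ell_3>0$, we always have $c_3+\ell_3>0$, so the absolute value is resolved unambiguously and no case split is needed. By contrast, the same computation for $Q$ produces the factor $c_3-\ell_3$, whose sign depends on whether $c_3/\ell_3$ is at least $1$, which is exactly what forces the two cases in Lemma~\ref{volQ}. There is no genuine obstacle here; the only things to watch are the sign bookkeeping in the cofactor expansion and confirming the cross-term cancellation, so I expect the calculation to close cleanly.
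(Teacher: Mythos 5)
Your proposal is correct and follows the same route the paper indicates: the paper states only that $R$ is a tetrahedron whose volume follows from the standard determinant formula for simplices, and your computation (projecting the hyperplane $\{x_3 = c_3+\ell_3\}$ isometrically to $\mathbb{R}^3$, factoring $2\ell_1$ and $2\ell_2$ from the edge vectors, and verifying the cancellation $bq-ap-2\ell_1 p-2\ell_2 a = 4\ell_1\ell_2$ to get $|\det| = 16\ell_1^2\ell_2^2(c_3+\ell_3)$) simply carries out that calculation in full detail, arriving at the stated formula. Your closing observation—that $c_3+\ell_3>0$ under the nonnegativity assumption resolves the absolute value without a case split, whereas the sign of $c_3-\ell_3$ forces the two cases of Lemma~\ref{volQ}—matches the paper's own remark following the lemma.
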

Note that due to our nonnegativity assumption on $\mathbf{c}$, we have $c_3+\ell_3 \geq 0$, and we do not need to consider separate cases when computing the volume of $R$.
\subsection{Computing the Mixed Volumes ${V(Q,Q,R)}$ and ${V(Q,R,R)}$}
By Theorem \ref{thm:polytopemixedvol}, we have
$$V(Q,Q,R)=\frac{1}{3}\sum_{\mathbf{u}\in\mathcal{U}(Q)}h_R(\mathbf{u}) \qquad \text{and} \qquad V(Q,R,R)=\frac{1}{3}\sum_{\mathbf{u}\in\mathcal{U}(R)}h_Q(\mathbf{u}),$$
where $\mathcal{U}(Q)$ and $\mathcal{U}(R)$ denote the sets of outer facet normals of $Q$ and $R$ (Definition~\ref{def:outerfacetnromals}), and $h_Q(\cdot)$ and $h_R(\cdot)$ denote the support functions of $Q$ and $R$ (Definition~\ref{def:support function}).
\subsubsection{Deriving the outer facet normals}
\label{subsec:outerfacetnormals}
\phantom{new line}

Define the indicator $$I=\begin{cases}
    1 &\text{ if $\frac{c_3}{\ell_3}<1$},\\
    0 &\text{ if $\frac{c_3}{\ell_3}\geq1$}.
\end{cases}$$ 
Applying Lemma \ref{lemma:outerfacetnormals} and embedding into $\mathbb{R}^4$ with a zero component in the $x_3$ direction, we obtain 
\begin{align*}
    \mathcal{U}(Q)
&= (-1)^{I}2\ell_1\ell_2\bigggl\{
{ \small \begin{bmatrix}
      1 \\
      -(c_2-\ell_2)(c_3-\ell_3)\\
      -(c_1+\ell_1)(c_3-\ell_3)\\
      0
\end{bmatrix}},
{\small\begin{bmatrix}
      1 \\
      -(c_2+\ell_2)(c_3-\ell_3)\\
      -(c_1-\ell_1)(c_3-\ell_3)\\
      0
\end{bmatrix}},
{\small\begin{bmatrix}
      -1 \\
      (c_2+\ell_2)(c_3-\ell_3)\\
      (c_1+\ell_1)(c_3-\ell_3)\\
      0
\end{bmatrix}}, \\[0em]
& \hspace{90mm}{\small\begin{bmatrix}
      -1\\
      (c_2-\ell_2)(c_3-\ell_3)\\
      (c_1-\ell_1)(c_3-\ell_3)\\
      0
\end{bmatrix}}
\bigggr \}\\
&\coloneq\begin{cases}2\ell_1\ell_2\left\{\mathbf{u}_1,\mathbf{u}_2,\mathbf{u}_3,\mathbf{u}_4\right\}, & \frac{c_3}{\ell_3}\geq1, \\2\ell_1\ell_2\left\{-\mathbf{u}_1,-\mathbf{u}_2,-\mathbf{u}_3,-\mathbf{u}_4\right\}, & \frac{c_3}{\ell_3}< 1, \end{cases}
\end{align*}
and
\begin{align*}
 \mathcal{U}(R) 
&= 2\ell_1\ell_2\bigggl\{
{\small\begin{bmatrix}
      1 \\
      -(c_2-\ell_2)(c_3+\ell_3)\\
      -(c_1+\ell_1)(c_3+\ell_3)\\
      0
\end{bmatrix}},
{\small\begin{bmatrix}
      1 \\
      -(c_2+\ell_2)(c_3+\ell_3)\\
      -(c_1-\ell_1)(c_3+\ell_3)\\
      0
\end{bmatrix}},
{\small\begin{bmatrix}
      -1 \\
      (c_2+\ell_2)(c_3+\ell_3)\\
      (c_1+\ell_1)(c_3+\ell_3)\\
      0
\end{bmatrix}}, \\[0em]
& \hspace{80mm}{\small\begin{bmatrix}
      -1\\
      (c_2-\ell_2)(c_3+\ell_3)\\
      (c_1-\ell_1)(c_3+\ell_3)\\
      0
\end{bmatrix}}
\bigggr\}\\
&\coloneq2\ell_1\ell_2\left\{\mathbf{u}_5,\mathbf{u}_6,\mathbf{u}_7,\mathbf{u}_8\right\}.
\end{align*}
Observe that when $\frac{c_3}{\ell_3}<1$, 
we must relabel the extreme points of $Q$ in order to satisfy the assumptions of Lemma \ref{lemma:outerfacetnormals}. This results in the two distinct cases for  $\mathcal{U}(Q)$. 
\subsubsection{Evaluating the support functions}
To compute the necessary mixed volumes, we must evaluate $h_Q(\cdot)$ at each point in $\mathcal{U}(R)$ and evaluate $h_R(\cdot)$ at each point in $\mathcal{U}(Q)$.  

Recall by Definition \ref{def:support function},
$$h_Q (\mathbf{u}) = \max_{\mathbf{x}\in Q} \mathbf{x}^T\mathbf{u} \qquad \text{and} \qquad h_R(\mathbf{u}) = \max_{\mathbf{x}\in R} \mathbf{x}^T\mathbf{u}.$$
Moreover, because $Q$
and $R$ are tetrahedra (embedded in $\R^4$), to compute $h_R(\mathbf{u})$ or $h_Q(\mathbf{u})$ for any vector $\mathbf{u}$, we need only compute $\mathbf{x}^T\mathbf{u}$ for each of the four extreme points, $\mathbf{x}$, and take the maximum of these values. Note that because the signs of the outer facet normals in $\mathcal{U}(Q)$ depend on the indicator $I$, the inner products for the case $\frac{c_3}{\ell_3}\geq1$ and $\frac{c_3}{\ell_3}<1$ differ by a factor of $-1$. This leads to further case analysis when evaluating the mixed volumes.

\subsubsection{Putting it together}
Throughout the proofs of Propositions~\ref{prop:V(Q,Q,R)} and~\ref{prop:V(Q,R,R)}, we refer to Lemmas~\ref{z_1}--\ref{z_8} whose statements and proofs appear in the \hyperlink{appendix}{appendix}.

\begin{proposition}\label{prop:V(Q,Q,R)}
Assume $c_i \geq 0$ for $i = 1,2,3$, and the ordering \eqref{eq:omegaprime} holds.  Then
\vspace{2mm}
\begin{equation*}
       V(Q,Q,R) =
        \begin{cases} 
       \frac{8}{3} \ell _1 \ell _2 \left(c_3 \ell _1 \ell _2+\ell _3 \left(2 c_2 \ell _1+c_1 \ell _2\right)\right),&\text{if }\frac{c_1}{\ell_1}\geq1,\\[0.5em]
       \frac{8}{3} \ell _1^2 \ell _2 \left(2 c_2 \ell _3+\ell _2 \left(c_3+\ell _3\right)\right),& \text{if }
        \frac{c_1}{\ell_1}<1, \frac{c_1}{\ell_1} + \frac{c_2}{\ell_2} + \frac{c_3}{\ell_3}\geq 1\\&\hspace{5mm}\text{ and } \frac{c_2}{\ell_2} +  \frac{c_3}{\ell_3} \geq 1 + \frac{c_1}{\ell_1},
        \\[0.5em]
         \frac{8}{3} \ell_1\ell_2\ell _3 \left(c_2 \ell _1+\ell _2 \left(c_1+2 \ell _1\right)\right),&\text{if }
        \frac{c_1}{\ell_1}<1, \frac{c_1}{\ell_1} + \frac{c_2}{\ell_2} + \frac{c_3}{\ell_3}\geq 1\\&\hspace{5mm}\text{ and } \frac{c_2}{\ell_2} +  \frac{c_3}{\ell_3} < 1 + \frac{c_1}{\ell_1},\\[0.5em]
        \frac{8}{3} \ell _1^2 \ell _2^2 \left(3 \ell_3-c_3\right),&\text{if } \frac{c_1}{\ell_1} + \frac{c_2}{\ell_2} + \frac{c_3}{\ell_3}<1.
        \end{cases}   
    \end{equation*}
\end{proposition}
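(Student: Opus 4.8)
The plan is to compute $V(Q,Q,R)$ directly from Theorem~\ref{thm:polytopemixedvol}, which gives $V(Q,Q,R)=\frac{1}{3}\sum_{\mathbf{u}\in\mathcal{U}(Q)}h_R(\mathbf{u})$. First I would substitute the explicit description of $\mathcal{U}(Q)$ derived in Section~\ref{subsec:outerfacetnormals}. Since each element of $\mathcal{U}(Q)$ has the form $(-1)^I\,2\ell_1\ell_2\,\mathbf{u}_j$ with $2\ell_1\ell_2>0$, and the support function is positively homogeneous, I can pull the positive scalar $2\ell_1\ell_2$ outside the sum to obtain
$$V(Q,Q,R)=\frac{2\ell_1\ell_2}{3}\sum_{j=1}^{4}h_R\!\left((-1)^I\mathbf{u}_j\right),$$
where the indicator $I$ records whether $\frac{c_3}{\ell_3}<1$. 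The sign $(-1)^I$ cannot be pulled out, since $h_R(-\mathbf{u})\neq -h_R(\mathbf{u})$ in general; this already forces a split according to whether $\frac{c_3}{\ell_3}\geq1$ or $\frac{c_3}{\ell_3}<1$.

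Next, I would evaluate each of the four support values $h_R((-1)^I\mathbf{u}_j)$. Because $R$ is a tetrahedron embedded in $\mathbb{R}^4$, Definition~\ref{def:support function} reduces each evaluation to $\max_{\mathbf{x}\in\{\mathbf{v}_5,\mathbf{v}_6,\mathbf{v}_7,\mathbf{v}_8\}}\mathbf{x}^T\big((-1)^I\mathbf{u}_j\big)$, i.e., a maximum of four explicit multilinear expressions in the $c_i,\ell_i$. This is precisely the content of the appendix Lemmas~\ref{z_1}--\ref{z_8}: each identifies the vertex of $R$ attaining the maximum and records the resulting value as a piecewise function of the ordering quantities $\frac{c_i}{\ell_i}$. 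I would invoke the relevant lemmas to obtain, for each $j$, the piecewise formula for $h_R((-1)^I\mathbf{u}_j)$ together with the parameter region on which each piece is valid.

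Finally, I would assemble the four support values, multiply by $\frac{2\ell_1\ell_2}{3}$, and simplify. The four cases in the statement emerge by intersecting the validity regions of the individual pieces: within the fixed ordering~\eqref{eq:omegaprime} and the standing assumption $c_i\geq0$, the combined breakpoints collapse to the thresholds $\frac{c_1}{\ell_1}\geq1$, then $\frac{c_1}{\ell_1}+\frac{c_2}{\ell_2}+\frac{c_3}{\ell_3}\geq1$, and then $\frac{c_2}{\ell_2}+\frac{c_3}{\ell_3}$ against $1+\frac{c_1}{\ell_1}$. In each resulting region the argmax vertices are fixed, the sum reduces to a single multilinear expression, and I would verify that it matches the stated formula.

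The main obstacle is the region bookkeeping rather than any single algebraic step. I expect the difficulty to lie in (i) checking that the four individual piecewise support functions, each with its own breakpoints, combine into \emph{exactly} the stated four-region partition of parameter space with no gaps or overlaps, and (ii) confirming that the $\frac{c_3}{\ell_3}$-dependence introduced by the sign $(-1)^I$ is correctly absorbed into the sum conditions, so that the cases can be phrased without an explicit split on $\frac{c_3}{\ell_3}\geq 1$ versus $\frac{c_3}{\ell_3}<1$. The ordering~\eqref{eq:omegaprime}, which forces $\frac{c_1}{\ell_1}\geq1$ to imply $\frac{c_2}{\ell_2},\frac{c_3}{\ell_3}\geq1$, is what makes this reduction possible and should be used repeatedly to rule out spurious regions and to discharge the boundary (equality) cases consistently.
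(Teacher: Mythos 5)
Your proposal follows essentially the same route as the paper's own proof: applying Theorem~\ref{thm:polytopemixedvol} with $\mathcal{U}(Q)$, using positive homogeneity to extract the factor $2\ell_1\ell_2$, splitting on the sign indicator $I$ (correctly noting $h_R(-\mathbf{u})\neq -h_R(\mathbf{u})$), evaluating each support value as a vertex maximum via the appendix lemmas, and assembling the four cases through the ordering~\eqref{eq:omegaprime}. The only slip is cosmetic: for $V(Q,Q,R)$ the relevant lemmas are \ref{z_1}--\ref{z_4} (Lemmas~\ref{z_5}--\ref{z_8} concern $h_Q$ at the normals of $R$ and are used for $V(Q,R,R)$).
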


\vspace{3mm}

\begin{proof}
Let $E(R) = \{\mathbf{v}_5, \mathbf{v}_6, \mathbf{v}_7, \mathbf{v}_8\}$ denote the vertices of $R$. From Section \ref{subsec:outerfacetnormals}, the set of outer facet normals is $$\mathcal{U}(Q) = \begin{cases} 2\ell_1\ell_2\{\mathbf{u}_1, \mathbf{u}_2, \mathbf{u}_3, \mathbf{u}_4\}&\frac{c_3}{\ell_3} \geq1 \\ 2\ell_1\ell_2\{-\mathbf{u}_1, -\mathbf{u}_2, -\mathbf{u}_3, -\mathbf{u}_4\} & \frac{c_3}{\ell_3} <1. \end{cases} $$  Therefore, by Theorem \ref{thm:polytopemixedvol} and Definition \ref{def:support function}, we have
\begin{equation*}\label{eq:V_QQR_formula}
V(Q,Q,R) = \begin{cases}  \frac{1}{3}\sum_{i= 1}^4 \max_{\mathbf{v} \in E(R)}\mathbf{v}^T\mathbf{u}_i , & \frac{c_3}{\ell_3} \geq 1, \\ \frac{1}{3}\sum_{i= 1}^4 \max_{\mathbf{v} \in E(R)}-\mathbf{v}^T\mathbf{u}_i , & \frac{c_3}{\ell_3} < 1.\end{cases}
\end{equation*}

For each outer facet normal $\mathbf{u}_i$, with $i \in \{1,2,3,4\}$, define:
\begin{itemize}
    \item $z^+_i = \max\{\mathbf{v}^T_5\mathbf{u}_i, \mathbf{v}^T_6\mathbf{u}_i, \mathbf{v}^T_7\mathbf{u}_i, \mathbf{v}^T_8\mathbf{u}_i\}$ (the support function evaluated at $\mathbf{u}_i$), and
      \item $z^-_i = \max\{-\mathbf{v}^T_5\mathbf{u}_i, -\mathbf{v}^T_6\mathbf{u}_i, -\mathbf{v}^T_7\mathbf{u}_i, -\mathbf{v}^T_8\mathbf{u}_i\}$ (the support function evaluated at $-\mathbf{u}_i$).
\end{itemize}
Then we have 
\begin{equation}\label{eq:V_QQR_z_sum}
V(Q,Q,R) = \begin{cases} \frac{1}{3}\cdot 2\ell_1\ell_2\cdot(z^+_1 + z^+_2 + z^+_3 + z^+_4) & \frac{c_3}{\ell_3} \geq1\\ \frac{1}{3}\cdot 2\ell_1\ell_2\cdot(z^-_1 + z^-_2 + z^-_3 + z^-_4) &  \frac{c_3}{\ell_3} <1.\end{cases}
\end{equation}
The values $z^+_i$ and $z^-_i$ depend on the parameter ratios and are determined by Lemmas \ref{z_1}--\ref{z_4} in the \hyperlink{appendix}{appendix}. This leads to four cases.

\medskip
\noindent\textbf{Case 1: $\frac{c_1}{\ell_1} \geq 1$.} 

By the ordering  \eqref{eq:omegaprime}, $\frac{c_1}{\ell_1} \geq 1$ implies $\frac{c_3}{\ell_3} \geq 1$.

Then by Lemmas \ref{z_1}--\ref{z_4}, after substitution and simplification, we have 
\begin{equation*}
z^+_1 + z^+_2 + z^+_3 + z^+_4 = \mathbf{v}^T_8\mathbf{u}_1 + \mathbf{v}^T_8\mathbf{u}_2+ \mathbf{v}^T_7\mathbf{u}_3 + \mathbf{v}^T_5\mathbf{u}_4 =4(c_3\ell_1\ell_2 + \ell_3(2c_2\ell_1 + c_1\ell_2)),
\end{equation*}
and substituting into \eqref{eq:V_QQR_z_sum} gives the first formula.

\medskip
\noindent\textbf{Case 2: $\frac{c_1}{\ell_1} < 1$ and $\frac{c_1}{\ell_1} + \frac{c_2}{\ell_2} + \frac{c_3}{\ell_3} \geq 1$ and $\frac{c_2}{\ell_2} +  \frac{c_3}{\ell_3} \geq 1 + \frac{c_1}{\ell_1}$.}

If $\frac{c_3}{\ell_3} \geq 1$, by Lemmas \ref{z_1}--\ref{z_4}, after substitution and simplification, we have   
\begin{equation*}
z^+_1 + z^+_2 + z^+_3 + z^+_4 =  \mathbf{v}^T_8\mathbf{u}_1 + \mathbf{v}^T_8\mathbf{u}_2 + \mathbf{v}^T_7\mathbf{u}_3 + \mathbf{v}^T_7\mathbf{u}_4 = 4\ell_1(2c_2\ell_3 + \ell_2(c_3+\ell_3)),
\end{equation*}

If $\frac{c_3}{\ell_3} < 1$, by Lemmas \ref{z_1}--\ref{z_4}, after substitution and simplification, we have   
\begin{equation*}
z^-_1 + z^-_2 + z^-_3 + z^-_4 =  -\mathbf{v}^T_7\mathbf{u}_1 - \mathbf{v}^T_7\mathbf{u}_2 - \mathbf{v}^T_8\mathbf{u}_3 - \mathbf{v}^T_8\mathbf{u}_4 = 4\ell_1(2c_2\ell_3 + \ell_2(c_3+\ell_3)).
\end{equation*}

In both subcases, substituting into \eqref{eq:V_QQR_z_sum} gives the second formula.

\medskip
\noindent\textbf{Case 3:  $\frac{c_1}{\ell_1} < 1$ and $\frac{c_1}{\ell_1} + \frac{c_2}{\ell_2} + \frac{c_3}{\ell_3} \geq 1$ and $\frac{c_2}{\ell_2} +  \frac{c_3}{\ell_3} < 1 + \frac{c_1}{\ell_1}$.}  

This case implies $\frac{c_3}{\ell_3} < 1$.

By Lemmas \ref{z_1}--\ref{z_4}, 
\begin{equation*}
z^-_1 + z^-_2 + z^-_3 + z^-_4 =  -\mathbf{v}^T_6\mathbf{u}_1 - \mathbf{v}^T_7\mathbf{u}_2 - \mathbf{v}^T_8\mathbf{u}_3 - \mathbf{v}^T_8\mathbf{u}_4 = 4 \ell _3 \left(c_2 \ell _1+\ell _2 \left(c_1+2 \ell _1\right)\right).
\end{equation*}
Substituting into \eqref{eq:V_QQR_z_sum} gives the third formula.

\medskip

\noindent\textbf{Case 4: $\frac{c_1}{\ell_1} + \frac{c_2}{\ell_2} + \frac{c_3}{\ell_3} < 1$ }  

This case implies $\frac{c_3}{\ell_3} < 1$ and $\frac{c_2}{\ell_2} +  \frac{c_3}{\ell_3} < 1 + \frac{c_1}{\ell_1}$.

By Lemmas \ref{z_1}--\ref{z_4}, 
\begin{equation*}
z^-_1 + z^-_2 + z^-_3 + z^-_4 =  -\mathbf{v}^T_6\mathbf{u}_1 - \mathbf{v}^T_7\mathbf{u}_2 - \mathbf{v}^T_8\mathbf{u}_3 - \mathbf{v}^T_5\mathbf{u}_4 = 4\ell_1\ell_2(3\ell_3-c_3).
\end{equation*}
Substituting into \eqref{eq:V_QQR_z_sum} gives the fourth formula.   
\end{proof}
%

\vspace{2mm}
\begin{proposition}\label{prop:V(Q,R,R)}
Assume $c_i \geq 0$ for $i = 1,2,3$, and the ordering \eqref{eq:omegaprime} holds.  Then
\vspace{2mm}
\begin{equation*}
        V(Q,R,R) = 
        \begin{cases}
        \frac{8}{3} \ell _1 \ell _2 \left(c_3 \ell _1 \ell _2+\ell _3 \left(2 c_2 \ell _1+c_1 \ell _2\right)\right), & \text{ if } \frac{c_1}{\ell_1}\geq1,\\[0.5em]
\frac{8}{3} \ell _1^2 \ell _2 \left(2 c_2 \ell _3+\ell _2 \left(c_3+\ell _3\right)\right), &  \text{ if } \frac{c_1}{\ell_1} <1 \text{ and } \frac{c_2}{\ell_2} \geq 1, \\[0.5em]     
        \frac{8}{3} \ell _1^2 \ell _2^2 \left(c_3+3 \ell _3\right), & \text{otherwise.}
        \end{cases}
    \end{equation*}
\end{proposition}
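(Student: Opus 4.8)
The argument parallels the proof of Proposition~\ref{prop:V(Q,Q,R)} but is structurally simpler, because the outer facet normals of $R$ admit a single form. First I would recall from Section~\ref{subsec:outerfacetnormals} that, since the assumption $c_3 \geq 0$ forces $c_3 + \ell_3 \geq 0$, the set of outer facet normals is $\mathcal{U}(R) = 2\ell_1\ell_2\{\mathbf{u}_5, \mathbf{u}_6, \mathbf{u}_7, \mathbf{u}_8\}$ with no dependence on the indicator $I$. By Theorem~\ref{thm:polytopemixedvol}, Definition~\ref{def:support function}, and the positive homogeneity of the support function, together with the fact that $Q$ is a tetrahedron with $E(Q) = \{\mathbf{v}_1, \mathbf{v}_2, \mathbf{v}_3, \mathbf{v}_4\}$, I would write
\[
V(Q,R,R) = \frac{1}{3}\sum_{\mathbf{u}\in\mathcal{U}(R)} h_Q(\mathbf{u}) = \frac{1}{3}\cdot 2\ell_1\ell_2 \sum_{i=5}^{8} w_i, \qquad w_i \coloneq \max_{\mathbf{v}\in E(Q)} \mathbf{v}^T\mathbf{u}_i.
\]
Because $\mathcal{U}(R)$ does not split on $I$, there is no secondary case analysis of the kind appearing in Case~2 of Proposition~\ref{prop:V(Q,Q,R)}; this is precisely the source of the reduction from four cases to three.

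Next I would determine each maximizer $w_i$ using Lemmas~\ref{z_5}--\ref{z_8}, which identify, for each normal $\mathbf{u}_i$ with $i \in \{5,6,7,8\}$, the vertex of $Q$ attaining the maximum inner product as a function of the ratios $c_1/\ell_1$ and $c_2/\ell_2$. The relevant thresholds are $c_1/\ell_1$ versus $1$ and $c_2/\ell_2$ versus $1$, and the ordering~\eqref{eq:omegaprime} is used to discard inconsistent combinations (for instance, $c_1/\ell_1 \geq 1$ forces $c_2/\ell_2 \geq 1$ and $c_3/\ell_3 \geq 1$). These thresholds partition the parameter space into exactly the three regimes of the statement: $c_1/\ell_1 \geq 1$; $c_1/\ell_1 < 1$ with $c_2/\ell_2 \geq 1$; and the remaining region.

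Then, for each of the three cases, I would substitute the argmaxes supplied by the lemmas, compute the sum $w_5 + w_6 + w_7 + w_8$, multiply by $\tfrac{2\ell_1\ell_2}{3}$, and simplify to obtain the three stated expressions. As a consistency check, the first case should coincide with the first case of Proposition~\ref{prop:V(Q,Q,R)} and reproduce the nonnegative-bounds value, since $c_1/\ell_1 \geq 1$ together with~\eqref{eq:omegaprime} is exactly the regime $a_i \geq 0$ underlying Theorem~\ref{theorem:nonnegative restated}.

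The main obstacle is establishing the argmax Lemmas~\ref{z_5}--\ref{z_8}: one must compare the four inner products $\mathbf{v}_j^T\mathbf{u}_i$ for $j = 1,\dots,4$ at each fixed $i$, show which dominates in each regime, and verify that the regime boundaries collapse to the simple thresholds $c_1/\ell_1 = 1$ and $c_2/\ell_2 = 1$ once~\eqref{eq:omegaprime} is imposed. In particular, I would need to confirm that the $c_3$ and $\ell_3$ dependence in the inner products does not induce any further splitting of the cases beyond these two thresholds; this is the delicate point, and it is where the ordering assumption does the essential work.
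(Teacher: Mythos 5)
Your proposal is correct and follows essentially the same route as the paper's proof: apply Theorem~\ref{thm:polytopemixedvol} with $\mathcal{U}(R) = 2\ell_1\ell_2\{\mathbf{u}_5,\mathbf{u}_6,\mathbf{u}_7,\mathbf{u}_8\}$ (no indicator split, since $c_3+\ell_3\geq 0$), invoke Lemmas~\ref{z_5}--\ref{z_8} for the maximizing vertices, and use \eqref{eq:omegaprime} to collapse the thresholds $c_1/\ell_1 \gtrless 1$ and $c_2/\ell_2 \gtrless 1$ into the three stated regimes. Your consistency check on the first case and your identification of the argmax lemmas as the only delicate point both match the paper's structure exactly.
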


\vspace{3mm}

\begin{proof}
Let $E(Q) = \{\mathbf{v}_1, \mathbf{v}_2, \mathbf{v}_3, \mathbf{v}_4\}$ denote the vertices of $Q$. From Section \ref{subsec:outerfacetnormals}, the set of outer facet normals is $$\mathcal{U}(R) = 2\ell_1\ell_2\{\mathbf{u}_5, \mathbf{u}_6, \mathbf{u}_7, \mathbf{u}_8\}.$$
 Therefore, by Theorem \ref{thm:polytopemixedvol} and Definition \ref{def:support function}, we have
\begin{equation*}\label{eq:V_QRR_formula}
V(Q,R,R)  = \frac{1}{3}\sum_{\mathbf{u}_i\in\mathcal{U}(R)} \max_{\mathbf{v} \in E(Q)} \mathbf{v}^T\mathbf{u}_i.
\end{equation*}

For each outer facet normal $\mathbf{u}_i$ , with $i \in \{5,6,7,8\}$, define:
\begin{itemize}
    \item $z_i = \max\{\mathbf{v}^T_1\mathbf{u}_i, \mathbf{v}^T_2\mathbf{u}_i, \mathbf{v}^T_3\mathbf{u}_i, \mathbf{v}^T_4\mathbf{u}_i\}$ (the support function evaluated at $\mathbf{u}_i$).
\end{itemize}
 Then we have 
\begin{equation}\label{eq:V_QRR_z_sum}
V(Q,R,R) = \frac{1}{3} \cdot 2\ell_1\ell_2 \cdot (z_5 + z_6 + z_7 + z_8).
\end{equation}
The values $z_i$ depend on the parameter ratios and are determined by Lemmas \ref{z_5}--\ref{z_8} in the \hyperlink{appendix}{appendix}. We consider three cases.

\medskip
\noindent\textbf{Case 1: $\frac{c_1}{\ell_1} \geq 1$.} 

By the ordering  \eqref{eq:omegaprime}, this implies $\frac{c_2}{\ell_2} \geq 1$. 
By Lemmas \ref{z_5}--\ref{z_8}, after substitution and simplification,
\begin{equation*}
z_5 + z_6 + z_7 + z_8 =  \mathbf{v}^T_1\mathbf{u}_5+\mathbf{v}^T_1\mathbf{u}_6 + \mathbf{v}^T_4\mathbf{u}_7 + \mathbf{v}^T_2\mathbf{u}_8= 4(c_3\ell_1\ell_2 + \ell_3(2c_2\ell_1 + c_1\ell_2)).
\end{equation*}
Substituting into \eqref{eq:V_QRR_z_sum} gives the first formula.

\medskip
\noindent\textbf{Case 2: $\frac{c_1}{\ell_1} < 1$ and $\frac{c_2}{\ell_2} \geq 1$.}

By Lemmas \ref{z_5}--\ref{z_8}, after substitution and simplification,
\begin{equation*}
z_5 + z_6 + z_7 + z_8 = \mathbf{v}^T_1\mathbf{u}_5+\mathbf{v}^T_3\mathbf{u}_6 + \mathbf{v}^T_4\mathbf{u}_7 + \mathbf{v}^T_2\mathbf{u}_8 = 4\ell_1(2c_2\ell_3 + \ell_2(c_3+\ell_3)).
\end{equation*}
Substituting into \eqref{eq:V_QRR_z_sum} gives the second formula.

\medskip
\noindent\textbf{Case 3: $\frac{c_1}{\ell_1} < 1$ and $\frac{c_2}{\ell_2} < 1$.}

By Lemmas \ref{z_5}--\ref{z_8},
\begin{equation*}
z_5 + z_6 + z_7 + z_8 =  \mathbf{v}^T_2\mathbf{u}_5+\mathbf{v}^T_3\mathbf{u}_6 + \mathbf{v}^T_4\mathbf{u}_7 + \mathbf{v}^T_1\mathbf{u}_8 = 4\ell_1\ell_2(c_3+3\ell_3).
\end{equation*}
Substituting into \eqref{eq:V_QRR_z_sum} gives the third formula.   
\end{proof}
\section{Main Result}
\label{sec:main}
\begin{theorem}\label{bigtheorem}
Assume $c_i \geq 0$ for $i = 1,2,3$, and the ordering \eqref{eq:omegaprime} holds.  Then
\vspace{2mm}
    $$\vol{4}{\mathcal{P}_H^3} =\begin{cases}
        \frac{8}{3}\ell_1^2\ell_2^2\ell_3^2\left(\frac{c_1}{\ell_1}+2\frac{c_2}{\ell_2}+2\frac{c_3}{\ell_3}\right),&\text{ if $\frac{c_1}{\ell_1}\geq1$},\\[0.5em]
        \frac{8}{3}\ell_1^2\ell_2^2\ell_3^2\left(2\frac{c_2}{\ell_2}+2\frac{c_3}{\ell_3} + 1\right),&\text{ if $\frac{c_1}{\ell_1}<1$ and $\frac{c_2}{\ell_2}\geq1$},\\[0.5em]
        \frac{8}{3}\ell_1^2\ell_2^2\ell_3^2\left(\frac{c_2}{\ell_2}+2\frac{c_3}{\ell_3} + 2\right),&\text{ if $\frac{c_2}{\ell_2}<1$ and $\frac{c_3}{\ell_3}\geq1$},\\[0.5em]
        \frac{8}{3}\ell_1^2\ell_2^2\ell_3^2\left(\frac{c_2}{\ell_2}+\frac{c_3}{\ell_3} + 3\right),&\text{ if $\frac{c_3}{\ell_3}<1$ and $\frac{c_1}{\ell_1}+\frac{c_2}{\ell_2}+\frac{c_3}{\ell_3}\geq 1$, and $\frac{c_2}{\ell_2}+\frac{c_3}{\ell_3}\geq 1+\frac{c_1}{\ell_1}$}, \\[0.5em]
        \frac{4}{3}\ell_1^2\ell_2^2\ell_3^2\left(\frac{c_1}{\ell_1}+\frac{c_2}{\ell_2}+\frac{c_3}{\ell_3} + 7\right),&\text{ if $\frac{c_3}{\ell_3}<1$ and $\frac{c_1}{\ell_1}+\frac{c_2}{\ell_2}+\frac{c_3}{\ell_3}\geq 1$, and $\frac{c_2}{\ell_2}+\frac{c_3}{\ell_3}< 1+\frac{c_1}{\ell_1}$},\\[0.5em]
        \frac{32}{3}\ell_1^2\ell_2^2\ell_3^2,&\text{ if $\frac{c_3}{\ell_3}<1$ and $\frac{c_1}{\ell_1}+\frac{c_2}{\ell_2}+\frac{c_3}{\ell_3} < 1$}.
    \end{cases}$$
\vspace{3mm}

    These six cases form a complete partition of the parameter space under the assumptions.
\end{theorem}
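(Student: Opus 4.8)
The plan is to substitute the four ingredient quantities computed earlier --- $\vol{3}{Q}$ (Lemma~\ref{volQ}), $\vol{3}{R}$ (Lemma~\ref{volR}), $V(Q,Q,R)$ (Proposition~\ref{prop:V(Q,Q,R)}), and $V(Q,R,R)$ (Proposition~\ref{prop:V(Q,R,R)}) --- into the integral representation \eqref{eq:integral} and evaluate. The first, and most satisfying, step is to observe that this integral collapses to a strikingly simple form. The integrand is a cubic Bernstein polynomial in $x_3$ over an interval of length $2\ell_3$; after the substitution $u = x_3 - (c_3-\ell_3)$, the four basis integrals are the standard Beta integrals
$$\int_0^{2\ell_3}(2\ell_3 - u)^a u^b\,du = (2\ell_3)^{a+b+1}\frac{a!\,b!}{(a+b+1)!},$$
which equal $\tfrac{(2\ell_3)^4}{4}, \tfrac{(2\ell_3)^4}{12}, \tfrac{(2\ell_3)^4}{12}, \tfrac{(2\ell_3)^4}{4}$ for $(a,b) = (3,0),(2,1),(1,2),(0,3)$. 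Combined with the binomial weights $1,3,3,1$ appearing in \eqref{eq:integral} and the prefactor $\tfrac{1}{(2\ell_3)^3}$, every term acquires the identical coefficient $\tfrac{\ell_3}{2}$, so that
$$\vol{4}{\mathcal{P}_H^3} = \frac{\ell_3}{2}\Bigl(\vol{3}{Q} + V(Q,Q,R) + V(Q,R,R) + \vol{3}{R}\Bigr).$$
This reduces the theorem to summing the four ingredients in each region and multiplying by $\tfrac{\ell_3}{2}$.

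Next I would partition the parameter space using the ordering \eqref{eq:omegaprime}, which forces $\tfrac{c_1}{\ell_1}\leq \tfrac{c_2}{\ell_2}\leq \tfrac{c_3}{\ell_3}$. Consequently the thresholds ``$\tfrac{c_i}{\ell_i}\geq 1$'' are nested, and the six regions of the theorem are precisely (i) all three ratios $\geq 1$; (ii) only $\tfrac{c_2}{\ell_2}, \tfrac{c_3}{\ell_3}\geq 1$; (iii) only $\tfrac{c_3}{\ell_3}\geq 1$; and (iv)--(vi) all three ratios $<1$, further split by the conditions relating $\tfrac{c_1}{\ell_1}+\tfrac{c_2}{\ell_2}+\tfrac{c_3}{\ell_3}$ to $1$ and $\tfrac{c_2}{\ell_2}+\tfrac{c_3}{\ell_3}$ to $1+\tfrac{c_1}{\ell_1}$. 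In each region I would read off which case of each ingredient applies: $\vol{3}{Q}$ is governed solely by whether $\tfrac{c_3}{\ell_3}\geq 1$; $V(Q,R,R)$ depends only on how many of $\tfrac{c_1}{\ell_1}, \tfrac{c_2}{\ell_2}$ reach $1$; and $V(Q,Q,R)$ uses its four-case split directly. A short argument via \eqref{eq:omegaprime} confirms each selection --- for instance, when $\tfrac{c_3}{\ell_3}\geq 1$ the inequalities $\tfrac{c_1}{\ell_1}+\tfrac{c_2}{\ell_2}+\tfrac{c_3}{\ell_3}\geq 1$ and $\tfrac{c_2}{\ell_2}+\tfrac{c_3}{\ell_3}\geq 1+\tfrac{c_1}{\ell_1}$ both hold automatically (the latter because $\tfrac{c_2}{\ell_2}\geq\tfrac{c_1}{\ell_1}$ and $\tfrac{c_3}{\ell_3}\geq 1$), placing $V(Q,Q,R)$ in its second case. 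Substituting the selected expressions and collecting terms over the common factor $\tfrac{8}{3}\ell_1^2\ell_2^2\ell_3^2$ then yields the six stated formulae.

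I expect the main obstacle to be the bookkeeping of the case analysis rather than any single hard computation. Two points require care. First, one must verify that the six regions form a genuine partition --- mutually exclusive and exhaustive --- which follows from the nesting induced by \eqref{eq:omegaprime} together with the observation that the two sum conditions only become active once all three ratios drop below $1$. Second, the case split of $V(Q,Q,R)$ in Proposition~\ref{prop:V(Q,Q,R)} is indexed by conditions on ratio sums rather than on individual ratios, so I would check that in each of the three ``all ratios $<1$'' regions the theorem's hypotheses select exactly one of the second, third, or fourth cases of that proposition, and that the two internal subcases of its second case (distinguished by the sign of $\tfrac{c_3}{\ell_3}-1$) yield the same value, as established in its proof. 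With these alignments confirmed, each of the six final formulae follows by routine algebraic simplification.
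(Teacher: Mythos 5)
Your proposal is correct and follows essentially the same route as the paper: substituting $\vol{3}{Q}$, $\vol{3}{R}$, $V(Q,Q,R)$, and $V(Q,R,R)$ into Equation~\eqref{eq:integral}, evaluating the integral, and aligning the ingredient cases with the six parameter regions exactly as in Table~\ref{tab:theorem6-cases} (your region-by-region case selections match the paper's row for row). Your one refinement is making the integral evaluation explicit via the Beta integrals, yielding the clean identity $\vol{4}{\mathcal{P}_H^3} = \tfrac{\ell_3}{2}\bigl(\vol{3}{Q} + V(Q,Q,R) + V(Q,R,R) + \vol{3}{R}\bigr)$, which the paper leaves implicit in ``evaluating the integral, and simplifying'' and which I verified reproduces all six stated formulae.
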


\vspace{3mm}
\begin{proof}
The result follows from substituting the volumes of $Q$ and $R$ (Lemmas~\ref{volQ} and~\ref{volR}), and the mixed volumes $V(Q,Q,R)$ and $V(Q,R,R)$ (Propositions~\ref{prop:V(Q,Q,R)} and~\ref{prop:V(Q,R,R)}) into Equation~\eqref{eq:integral}, evaluating the integral, and simplifying.  The six cases in the theorem correspond to different case combinations that arise when computing these volumes and mixed volumes.  For a summary, see Table \ref{tab:theorem6-cases}. Recall that there is no case analysis for the volume of $R$, hence we use $$\vol{3}{R}=\frac{8}{3}\ell_1^2\ell_2^2(c_3+\ell_3),$$ in every case of the theorem, and Lemma~\ref{volR} is omitted from the table.   
\end{proof}

\begin{table}[h]
\centering
\vspace*{2mm}
\begin{tabularx}{\textwidth}{clCCC}
\hline
\multirowcell{2}{\\ Case} & \multirowcell{2}{\\ Conditions} & \multirowcell{2}{Proposition~\ref{prop:V(Q,Q,R)}\\ $V(Q,Q,R)$} & \multirowcell{2}{Proposition~\ref{prop:V(Q,R,R)}\\ $V(Q,R,R)$} & \multirowcell{2}{\\ Lemma~\ref{volQ}\\ $\vol{3}{Q}$}   \\ 
 &  & & & \\\hline \\[-0.9em]
1 & $\frac{c_1}{\ell_1} \geq 1$ & Case 1 & Case 1 & Case 1 \\[5pt]
2 & $\frac{c_1}{\ell_1} < 1$, $\frac{c_2}{\ell_2} \geq 1$ & Case 2 & Case 2 &Case 1 \\[5pt]
3 & $\frac{c_2}{\ell_2} < 1$, $\frac{c_3}{\ell_3} \geq 1$ & Case 2 & Case 3 & Case 1 \\[5pt]
4 & $\frac{c_3}{\ell_3} < 1$, $\frac{c_1}{\ell_1}+\frac{c_2}{\ell_2}+\frac{c_3}{\ell_3} \geq 1$, $\frac{c_2}{\ell_2} + \frac{c_3}{\ell_3} \geq 1 + \frac{c_1}{\ell_1}$ & Case 2 & Case 3 & Case 2 \\[5pt]
5 & $\frac{c_3}{\ell_3} < 1$, $\frac{c_1}{\ell_1}+\frac{c_2}{\ell_2}+\frac{c_3}{\ell_3} \geq 1$, $\frac{c_2}{\ell_2} + \frac{c_3}{\ell_3} < 1 + \frac{c_1}{\ell_1}$ & Case 3 & Case 3 & Case 2 \\[5pt]
6 & $\frac{c_3}{\ell_3} < 1$, $\frac{c_1}{\ell_1}+\frac{c_2}{\ell_2}+\frac{c_3}{\ell_3} < 1$ & Case 4 & Case 3 & Case 2\\ \\[-0.9em] \hline
\end{tabularx}

\caption{Case mappings for the proof of Theorem~\ref{bigtheorem}. Each row indicates which cases from Proposition~\ref{prop:V(Q,Q,R)}, Proposition~\ref{prop:V(Q,R,R)}, and Lemma~\ref{volQ} are used to evaluate the integral in Equation \eqref{eq:integral} for the given case of Theorem~\ref{bigtheorem}.}
\label{tab:theorem6-cases}
\end{table}

\subsection{Example}
Consider the polytope
$$P\coloneq\text{conv}\{(f,x_1,x_2,x_3)\in\mathbb R^4\ :\ f=x_1x_2x_3,\ x_1\in[3,7],\ x_2 \in [-2,4],\ x_3 \in [-3,-1]\}.$$

To find the volume of $P$, we first compute the centers and half-lengths of each interval.  Applying Lemma~\ref{force_positivity}, we take $c_1=5, c_2=1, c_3=2$, and $\ell_1 = 2, \ell_2=3, \ell_3 = 1$.

To ensure the correct labeling, we calculate: $$\frac{c_1}{\ell_1} = \frac{5}{2}, \qquad \frac{c_2}{\ell_2} = \frac{1}{3}, \qquad \text{and} \qquad \frac{c_3}{\ell_3} = \frac{2}{1},$$ and observe that we must permute the indices to satisfy \eqref{eq:omegaprime}:
$$(1,2,3) \mapsto (3,1,2).$$

Therefore, we define the polytope 
$$\tilde{P}\coloneq \text{conv}\{(f,x_1,x_2,x_3)\in\mathbb R^4\ :\ f=x_1x_2x_3,\ x_1\in[-2,4],\ x_2 \in [1,3],\ x_3 \in [3,7]\},$$
and note that $\vol{4}{P} = \vol{4}{\tilde{P}}$ by Lemma~\ref{force_positivity}.

The plots in Figure \ref{fig:examplegraphics} visualize the 4-dimensional polytope $\tilde{P}$ by displaying five representative examples (at $x_3 \in \{3,4,5,6,7\}$) from the continuum of 3-dimensional cross-sections of $\tilde{P}$ parallel to the $x_3=0$ plane.   Note that when $x_3=3$ we have exactly the tetrahedron $Q$, and when $x_3=7$, we have $R$.  ``Slices'' of $\tilde{P}$, such as these examples, correspond to  the argument inside the volume function in Equation \eqref{eq:integral}.  Specifically, for a fixed $x_3\in [3,7]$, the corresponding cross-section may be described as   %
$$\frac{c_3+\ell_3 - x_3}{(c_3+\ell_3) - (c_3-\ell_3)} Q + \frac{x_3 - (c_3-\ell_3)}{(c_3+\ell_3) - (c_3-\ell_3)} R \;\; = \;\; \frac{7 - x_3}{4} Q + \frac{x_3 - 3}{4} R,$$
and Theorem \ref{bigtheorem} provides the formula for \[
\vol{4}{\tilde{P}} = \int_{x_3=3}^{x_3=7} \vol{3}{\frac{7 - x_3}{4} Q + \frac{x_3 - 3}{4} R} dx_3.
\]

Taking into account the appropriate labeling, observe that we fall into Case 2 of Theorem \ref{bigtheorem}. We can now compute the volume of $\tilde{P}$, and thus the volume of $P$, as follows:
\begin{align*}
\vol{4}{P} &= \vol{4}{\tilde{P}}\\
&=\frac{8}{3}\ell_1^2\ell_2^2\ell_3^2\left(2\frac{c_2}{\ell_2}+2\frac{c_3}{\ell_3} + 1\right) \\
&=\frac{8}{3}(3)^2(1)^2(2)^2\left(2 \cdot \frac{2}{1}+2 \cdot \frac{5}{2} + 1\right) \\
&=\frac{8}{3} \;\times\; 36 \;\times\; 10 \\
&=960.
\end{align*}

\noindent The reader may verify this result numerically using software.

\begin{figure}[h!]
\centering 
\begin{subfigure}{0.32\textwidth}
\centering 
    \includegraphics[width=\textwidth]{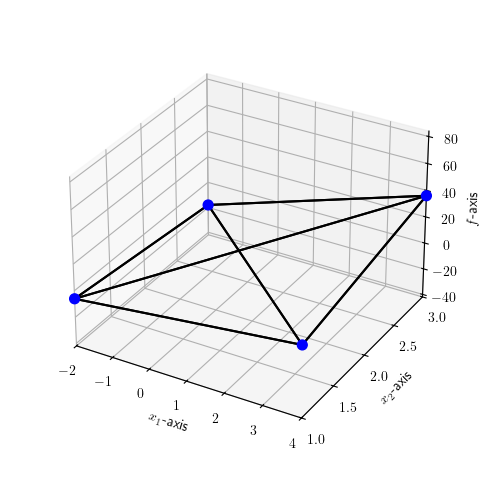}
    \caption{Cross-section when \\ $x_3=3$}
\end{subfigure} 
\begin{subfigure}{0.32\textwidth}
\centering 
    \includegraphics[width=\linewidth]{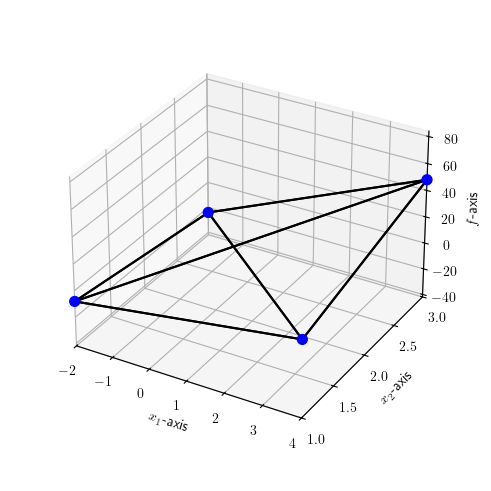}
    \caption{Cross-section when \\$x_3=4$}
\end{subfigure} 
\begin{subfigure}{0.32\textwidth}
\centering 
    \includegraphics[width=\linewidth]{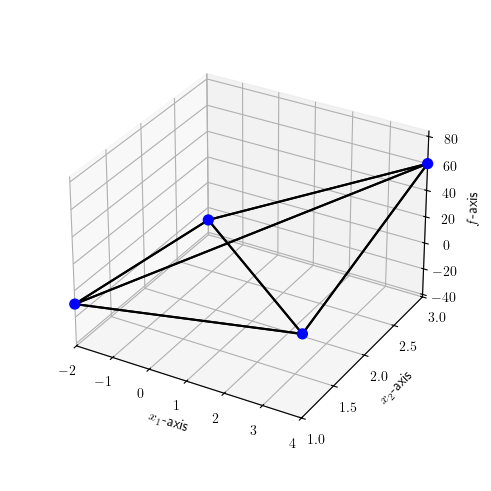}
    \caption{Cross-section when \\$x_3=5$}
\end{subfigure} 
\begin{subfigure}{0.32\textwidth}
\centering 
    \includegraphics[width=\linewidth]{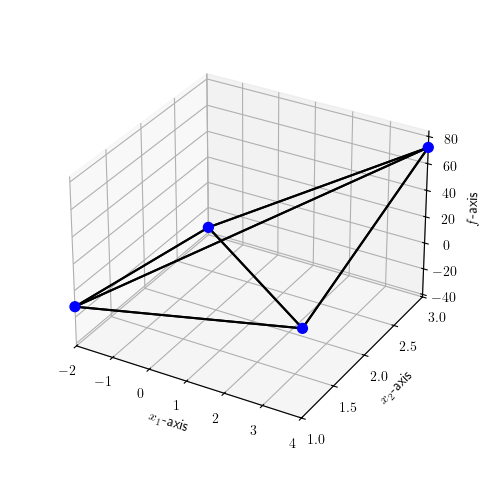}
    \caption{Cross-section when \\$x_3=6$}
\end{subfigure}
\begin{subfigure}{0.32\textwidth}
\centering 
    \includegraphics[width=\linewidth]{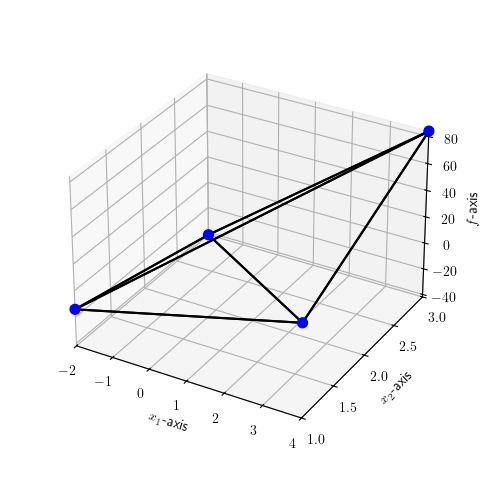}
    \caption{Cross-section when \\$x_3=7$}
\end{subfigure}
\caption{Cross-sections of $\tilde{P}$ for $x_3 \in \{3,4,5,6,7\}$.  Note that when $x_3=3$, the corresponding cross-section is $Q$, and when $x_3=7$, the corresponding cross-section is $R$.  
}
\label{fig:examplegraphics}
\end{figure}
\section{Conclusions and further work}
\label{sec:conclusion}
In this paper, we derived closed-form volume formulae for the convex hull of the graph of a trilinear monomial over general box domains. Under the nonnegativity assumption on our interval centers (Lemma~\ref{force_positivity}), and the ordering~\eqref{eq:omegaprime}, the volume divides into six cases determined by the center-to-half-length ratios $c_i/\ell_i$. These cases reflect the different geometric configurations that arise with mixed-sign domains.

Our work extends existing volume results to general box domains, completing the analysis initiated by \cite{SpeakmanLee17} and subsequently reproved by \cite{SpeakmanAverkov18}. While we have completed the volume analysis for the convex hull of the trilinear monomial graph, volume formulae for natural relaxations of the convex hull over general domains, such as the three alternative convexifications obtained via an iterative application of the McCormick inequalities \citep{McCormick76}, remain an open research question. Currently, such formulae exist only for nonnegative bounds \citep{SpeakmanLee17}, where under the ordering condition \eqref{eq:omega}, the three convexifications can be ranked according to their volume. 

Interesting directions for future work include deriving closed-form volume formulae for these alternative McCormick-based relaxations over general domains, and determining how they rank by tightness. It remains open whether the ranking observed for nonnegative bounds \citep{SpeakmanLee17} persists over general domains, or if mixed-sign configurations alter the relative tightness of these relaxations. Beyond this, building on the computations in \cite{SpeakmanYuLee17} and \cite{LSS22} to further explore connections to overall global optimization performance represents an important avenue for future research. Finally, extending these comparisons to higher-degree monomials remains a compelling, if challenging, open direction.

\section{Acknowledments}
The authors would like to express their sincere gratitude to Prof. Dr. Gennadiy Averkov for his insightful contributions to this work, including his valuable suggestion to characterize box domains via their centers and half-lengths. E. Speakman further wishes to thank Dr. Averkov for the numerous conversations in which he generously shared his expertise, patiently and enthusiastically explaining important nuances of mixed volume theory.

In addition, the authors gratefully acknowledge summer funding for  L. Makhoul from the University of Colorado Denver Department of Mathematical and Statistical Sciences.

\bibliographystyle{abbrvnat} 
\bibliography{refarxiv}

\appendix

\section*{\hypertarget{appendix}{Appendix}}\label{sec:Appendix}

Here we present the technical lemmas required for our main result.  To begin, we recall the following definitions from Sections \ref{sec:geometric} and \ref{sec:prelim}, which we restate for readability. 

\vspace{4mm}

The extreme points of $\mathcal{P}_H^3(\mathbf{c},\Bell)$:
 {\small \begin{align*} 
&\mathbf{v}_1 \coloneq \begin{bmatrix}
      (c_1-\ell_1)(c_2-\ell_2)(c_3-\ell_3) \\
      (c_1-\ell_1)\\
      (c_2-\ell_2)\\
      (c_3-\ell_3)
\end{bmatrix},\; \mathbf{v}_2 \coloneq\begin{bmatrix}
      (c_1+\ell_1)(c_2-\ell_2)(c_3-\ell_3) \\
      (c_1+\ell_1)\\
      (c_2-\ell_2)\\
      (c_3-\ell_3)
\end{bmatrix}, \; \mathbf{v}_3\coloneq\begin{bmatrix}
      (c_1-\ell_1)(c_2+\ell_2)(c_3-\ell_3) \\
      (c_1-\ell_1)\\
      (c_2+\ell_2)\\
      (c_3-\ell_3)
\end{bmatrix},\\[0.5em] &\mathbf{v}_4 \coloneq\begin{bmatrix}
      (c_1+\ell_1)(c_2+\ell_2)(c_3-\ell_3) \\
      (c_1+\ell_1)\\
      (c_2+\ell_2)\\
      (c_3-\ell_3)
\end{bmatrix}, \; \mathbf{v}_5 \coloneq\begin{bmatrix}
      (c_1-\ell_1)(c_2-\ell_2)(c_3+\ell_3) \\
      (c_1-\ell_1)\\
      (c_2-\ell_2)\\
      (c_3+\ell_3)
\end{bmatrix}, \; \mathbf{v}_6 \coloneq\begin{bmatrix}
      (c_1+\ell_1)(c_2-\ell_2)(c_3+\ell_3) \\
      (c_1+\ell_1)\\
      (c_2-\ell_2)\\
      (c_3+\ell_3)
\end{bmatrix}, \\[0.5em] &\mathbf{v}_7\coloneq\begin{bmatrix}
      (c_1-\ell_1)(c_2+\ell_2)(c_3+\ell_3) \\
      (c_1-\ell_1)\\
      (c_2+\ell_2)\\
      (c_3+\ell_3)
\end{bmatrix},\; \mathbf{v}_8 \coloneq\begin{bmatrix}
      (c_1+\ell_1)(c_2+\ell_2)(c_3+\ell_3) \\
      (c_1+\ell_1)\\
       (c_2+\ell_2)\\
      (c_3+\ell_3)
\end{bmatrix}.\end{align*}}

\vspace{3mm}

The facet normal vectors:
{\small \begin{align*} 
&\mathbf{u}_1 \coloneq  \begin{bmatrix}
      1 \\
      -(c_2-\ell_2)(c_3-\ell_3)\\
      -(c_1+\ell_1)(c_3-\ell_3)\\
      0
\end{bmatrix},\; \mathbf{u}_2 \coloneq\begin{bmatrix}
      1 \\
      -(c_2+\ell_2)(c_3-\ell_3)\\
      -(c_1-\ell_1)(c_3-\ell_3)\\
      0
\end{bmatrix}, \; \mathbf{u}_3\coloneq\begin{bmatrix}
      -1 \\
      (c_2+\ell_2)(c_3-\ell_3)\\
      (c_1+\ell_1)(c_3-\ell_3)\\
      0
\end{bmatrix}, \\[0.5em] &\mathbf{u}_4 \coloneq\begin{bmatrix}
      -1\\
      (c_2-\ell_2)(c_3-\ell_3)\\
      (c_1-\ell_1)(c_3-\ell_3)\\
      0
\end{bmatrix}, \; \mathbf{u}_5 \coloneq\begin{bmatrix}
      1 \\
      -(c_2-\ell_2)(c_3+\ell_3)\\
      -(c_1+\ell_1)(c_3+\ell_3)\\
      0
\end{bmatrix}, \; \mathbf{u}_6 \coloneq\begin{bmatrix}
      1 \\
      -(c_2+\ell_2)(c_3+\ell_3)\\
      -(c_1-\ell_1)(c_3+\ell_3)\\
      0
\end{bmatrix}, \\[0.5em] &\mathbf{u}_7\coloneq\begin{bmatrix}
      -1 \\
      (c_2+\ell_2)(c_3+\ell_3)\\
      (c_1+\ell_1)(c_3+\ell_3)\\
      0
\end{bmatrix},\; \mathbf{u}_8 \coloneq\begin{bmatrix}
      -1\\
      (c_2-\ell_2)(c_3+\ell_3)\\
      (c_1-\ell_1)(c_3+\ell_3)\\
      0
\end{bmatrix}.\end{align*}}

\vspace{3mm}

The set of outer facet normals of $Q$:
$$\displaystyle
    \mathcal{U}(Q)\coloneq \begin{cases} 2\ell_1\ell_2\left\{\mathbf{u}_1,\mathbf{u}_2,\mathbf{u}_3,\mathbf{u}_4\right\}, & \frac{c_3}{\ell_3} \geq 1, \\ 2\ell_1\ell_2\left\{-\mathbf{u}_1,-\mathbf{u}_2,-\mathbf{u}_3,-\mathbf{u}_4\right\}, & \frac{c_3}{\ell_3} < 1,  \end{cases}$$
and the set of outer facet normals of $R$:
$$ \mathcal{U}(R) \coloneq2\ell_1\ell_2\left\{\mathbf{u}_5,\mathbf{u}_6,\mathbf{u}_7,\mathbf{u}_8\right\}.$$
Furthermore, we note that because $Q$ lies in the hyperplane $x_3 = c_3 - \ell_3$ and $R$ lies in the hyperplane $x_3 = c_3 + \ell_3$, their outer facet normals  $\mathbf{u}_i$ are 4-dimensional vectors with a zero component in the $x_3$ direction (the fourth component). Consequently, when computing $\mathbf{v}_j^T \mathbf{u}_i$ for any extreme point $\mathbf{v}_j$, the contribution from the $x_3$ component vanishes, and the inner product depends only on the first three coordinates.

Finally, recall that without loss of generality, we make the nonnegativity assumption $c_i \geq 0$ for $i = 1,2,3$, and by definition $\ell_i >0$  for $i = 1,2,3$.  Moreover, we assume the ordering \eqref{eq:omegaprime}: 
\begin{align*}
        \frac{c_1}{\ell_1}\leq \frac{c_2}{\ell_2}\leq \frac{c_3}{\ell_3}. 
\end{align*}
We are now ready to state and prove Lemmas \ref{z_1}--\ref{z_8}.

\vspace{4mm}

\begin{lemma}\label{z_1}
The relevant inner products are given by:
\begin{align*}
    \mathbf{v}_5^T\mathbf{u}_1 &=  \left(c_2-\ell_2\right) \left(c_1 \left(3\ell_3-c_3\right)-\ell_1 \left(c_3+\ell_3\right)\right),\\
    \mathbf{v}_6^T\mathbf{u}_1 &=  -\left(c_1+\ell_1\right) \left(c_2-\ell_2\right) \left(c_3-3 \ell_3\right),\\
    \mathbf{v}_7^T\mathbf{u}_1 &=  c_1 \left(\ell_2 \left(c_3+\ell_3\right)-c_2 \left(c_3-3 \ell_3\right)\right)-\ell_1 \left(\ell_2 \left(3 c_3-\ell_3\right)+c_2 \left(c_3+\ell_3\right)\right),\\
    \mathbf{v}_8^T\mathbf{u}_1 &= \left(c_1+\ell_1\right) \left(\ell_2 \left(c_3+\ell_3\right)-c_2 \left(c_3-3 \ell_3\right)\right).
\end{align*}
Then
$$z^+_1 \coloneq \max\{\mathbf{v}_5^T\mathbf{u}_1,\mathbf{v}_6^T\mathbf{u}_1,\mathbf{v}_7^T\mathbf{u}_1,\mathbf{v}_8^T\mathbf{u}_1\} = 
    \mathbf{v}_8^T\mathbf{u}_1,$$ and
$$z^-_1 \coloneq \max\{-\mathbf{v}_5^T\mathbf{u}_1,-\mathbf{v}_6^T\mathbf{u}_1,-\mathbf{v}_7^T\mathbf{u}_1,-\mathbf{v}_8^T\mathbf{u}_1\} = \begin{cases}
    -\mathbf{v}_7^T\mathbf{u}_1, &\text{if }\frac{c_2}{\ell_2} +  \frac{c_3}{\ell_3} \geq 1 + \frac{c_1}{\ell_1},\\
    -\mathbf{v}_6^T\mathbf{u}_1,&\text{if } \frac{c_2}{\ell_2} +  \frac{c_3}{\ell_3} < 1 + \frac{c_1}{\ell_1}.
\end{cases} $$
\end{lemma}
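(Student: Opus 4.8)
The plan is to first verify the four closed-form expressions for the inner products by direct computation, and then locate the two extrema through a short sequence of pairwise comparisons whose signs are controlled entirely by the sign hypotheses $c_i\geq 0$, $\ell_i>0$ and the ordering \eqref{eq:omegaprime}. For the first part, I would compute each $\mathbf{v}_j^T\mathbf{u}_1$ for $j\in\{5,6,7,8\}$ coordinate by coordinate. Since the fourth entry of $\mathbf{u}_1$ is zero, only the first three coordinates of each $\mathbf{v}_j$ contribute, so each inner product is a sum of three bilinear products; factoring out the common factor (for instance $(c_2-\ell_2)$ in $\mathbf{v}_5^T\mathbf{u}_1$ and $\mathbf{v}_8^T\mathbf{u}_1$) and collecting terms produces the stated forms. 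This step is purely mechanical.

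The key idea for both extrema is to work with \emph{vertex differences}. Because the vertices $\mathbf{v}_5,\dots,\mathbf{v}_8$ agree in all but a few coordinates, each difference $\mathbf{v}_i-\mathbf{v}_j$ is sparse, and $(\mathbf{v}_i-\mathbf{v}_j)^T\mathbf{u}_1$ collapses to a single product whose sign is transparent. To establish $z^+_1=\mathbf{v}_8^T\mathbf{u}_1$, I would check the three differences of $\mathbf{v}_8$ against $\mathbf{v}_5,\mathbf{v}_6,\mathbf{v}_7$: one obtains $(\mathbf{v}_8-\mathbf{v}_7)^T\mathbf{u}_1=4\ell_1(c_2\ell_3+c_3\ell_2)$, $(\mathbf{v}_8-\mathbf{v}_6)^T\mathbf{u}_1=4\ell_2\ell_3(c_1+\ell_1)$, and $(\mathbf{v}_8-\mathbf{v}_5)^T\mathbf{u}_1=4\ell_3(c_1\ell_2+c_2\ell_1)$, each manifestly nonnegative under the sign hypotheses. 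Hence $\mathbf{v}_8^T\mathbf{u}_1$ dominates the other three.

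For $z^-_1=-\min_j \mathbf{v}_j^T\mathbf{u}_1$ the same technique pins down the minimizer. I would first use the ordering to discard $\mathbf{v}_5$: the difference $(\mathbf{v}_7-\mathbf{v}_5)^T\mathbf{u}_1=4\ell_2(c_1\ell_3-c_3\ell_1)=4\ell_1\ell_2\ell_3\left(\tfrac{c_1}{\ell_1}-\tfrac{c_3}{\ell_3}\right)\leq 0$ by \eqref{eq:omegaprime}, so $\mathbf{v}_7^T\mathbf{u}_1\leq \mathbf{v}_5^T\mathbf{u}_1$; since $\mathbf{v}_8^T\mathbf{u}_1$ is already the maximum, the minimizer must be $\mathbf{v}_6$ or $\mathbf{v}_7$. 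The decisive comparison is then between these two, and a short expansion yields $(\mathbf{v}_7-\mathbf{v}_6)^T\mathbf{u}_1=4\ell_1\ell_2\ell_3\left(1+\tfrac{c_1}{\ell_1}-\tfrac{c_2}{\ell_2}-\tfrac{c_3}{\ell_3}\right)$, whose sign is precisely the threshold in the statement. When $\tfrac{c_2}{\ell_2}+\tfrac{c_3}{\ell_3}\geq 1+\tfrac{c_1}{\ell_1}$ this quantity is nonpositive, so $\mathbf{v}_7^T\mathbf{u}_1$ is the minimum and $z^-_1=-\mathbf{v}_7^T\mathbf{u}_1$; otherwise $\mathbf{v}_6^T\mathbf{u}_1$ is the minimum and $z^-_1=-\mathbf{v}_6^T\mathbf{u}_1$.

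The only genuine obstacle is bookkeeping. The crucial difference $(\mathbf{v}_7-\mathbf{v}_6)^T\mathbf{u}_1$ involves cancellation among six bilinear terms before the clean ratio form emerges, and one must track signs carefully to align the inequality direction with the case split. Everything else reduces to recognizing that each vertex difference dotted with $\mathbf{u}_1$ factors into a product of the nonnegative quantities $c_i,\ell_i$ or into a single ratio difference, after which the ordering and sign hypotheses settle every comparison.
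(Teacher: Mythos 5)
Your proposal is correct and follows essentially the same route as the paper's proof: direct computation of the inner products, pairwise differences whose signs follow from nonnegativity and \eqref{eq:omegaprime}, and the case split on the sign of $(\mathbf{v}_7-\mathbf{v}_6)^T\mathbf{u}_1 = 4\ell_1\ell_2\ell_3\left(1+\tfrac{c_1}{\ell_1}-\tfrac{c_2}{\ell_2}-\tfrac{c_3}{\ell_3}\right)$, which matches the paper's threshold exactly. The only cosmetic difference is that you discard $\mathbf{v}_8$ from the $z_1^-$ comparison by reusing the fact that it maximizes $\mathbf{v}^T\mathbf{u}_1$, whereas the paper verifies $-\mathbf{v}_7^T\mathbf{u}_1-(-\mathbf{v}_8^T\mathbf{u}_1)\geq 0$ explicitly; both are valid.
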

\begin{proof}
The inner product formulas follow from direct computation using the definitions of $\mathbf{v}_j$ and $\mathbf{u}_1$ given in the Appendix preamble.

We show that $\mathbf{v}_8^T\mathbf{u}_1 \geq \mathbf{v}_j^T\mathbf{u}_1$ for all $j \in \{5,6,7\}$ by computing the pairwise differences:
\begin{align*}
\mathbf{v}_8^T\mathbf{u}_1 - \mathbf{v}_5^T\mathbf{u}_1 &= 4\ell_3(\ell_2 c_1 + \ell_1 c_2) \geq 0,\\
\mathbf{v}_8^T\mathbf{u}_1 - \mathbf{v}_6^T\mathbf{u}_1 &= 4\ell_2\ell_3(c_1 + \ell_1) \geq 0, \\
\mathbf{v}_8^T\mathbf{u}_1 - \mathbf{v}_7^T\mathbf{u}_1 &= 4\ell_1(\ell_3 c_2 + \ell_2 c_3) \geq 0. 
\end{align*}

Each inequality holds because of our nonnegativity assumption. Thus $\mathbf{v}_8^T\mathbf{u}_1$ is the maximum of the four values.

We now determine the maximum of $\{-\mathbf{v}_5^T\mathbf{u}_1, -\mathbf{v}_6^T\mathbf{u}_1, -\mathbf{v}_7^T\mathbf{u}_1, -\mathbf{v}_8^T\mathbf{u}_1\}$. Computing pairwise differences we obtain:
\begin{align*}
-\mathbf{v}_7^T\mathbf{u}_1 - (-\mathbf{v}_5^T\mathbf{u}_1) &= 4\ell_2(\ell_1 c_3 - \ell_3 c_1)\geq 0, \\
 -\mathbf{v}_7^T\mathbf{u}_1 - (-\mathbf{v}_8^T\mathbf{u}_1) &= 4\ell_1(\ell_3 c_2 + \ell_2 c_3)  \geq 0,\\
-\mathbf{v}_7^T\mathbf{u}_1 - (-\mathbf{v}_6^T\mathbf{u}_1) &= 4\bigl(\ell_2(\ell_1 c_3 - \ell_3 c_1) + \ell_1\ell_3(c_2 - \ell_2)\bigr).
\end{align*}

The first inequality holds by \eqref{eq:omegaprime}. The second inequality holds because of nonnegativity.  For the final expression, we have
\begin{align*}
&\quad 4\bigl(\ell_2(\ell_1 c_3 - \ell_3 c_1) + \ell_1\ell_3(c_2 - \ell_2)\bigr) &\geq 0 \\
\Longleftrightarrow &\quad \ell_1\ell_2 c_3 - \ell_2\ell_3 c_1 + \ell_1\ell_3 c_2 - \ell_1\ell_2\ell_3 &\geq 0\\
\Longleftrightarrow &\quad \frac{c_3}{\ell_3} - \frac{c_1}{\ell_1} + \frac{c_2}{\ell_2} - 1 &\geq 0
\end{align*}

Thus, if $\frac{c_2}{\ell_2} +  \frac{c_3}{\ell_3} \geq 1 + \frac{c_1}{\ell_1}$, then $z_1^- = -\mathbf{v}_7^T\mathbf{u}_1$.  If $\frac{c_2}{\ell_2} +  \frac{c_3}{\ell_3} < 1 + \frac{c_1}{\ell_1}$, then $-\mathbf{v}_6^T\mathbf{u}_1 > -\mathbf{v}_7^T\mathbf{u}_1$, and we have $z_1^- = -\mathbf{v}_6^T\mathbf{u}_1$.   
\end{proof}

\begin{lemma}\label{z_2}
The relevant inner products are given by:
\begin{align*}
    \mathbf{v}_5^T\mathbf{u}_2 &=  \left(c_1-\ell_1\right) \left(c_2 \left(3\ell_3-c_3\right)-\ell_2 \left(c_3+\ell_3\right)\right),\\
    \mathbf{v}_6^T\mathbf{u}_2 &= \ell_1 \left(\ell_2 \left(\ell_3-3 c_3\right)+c_2 \left(c_3+\ell_3\right)\right)-c_1 \left(\ell_2 \left(c_3+\ell_3\right)+c_2 \left(c_3-3 \ell_3\right)\right),\\
    \mathbf{v}_7^T\mathbf{u}_2 &= -\left(c_1-\ell_1\right) \left(c_2+\ell_2\right) \left(c_3-3 \ell_3\right),\\
    \mathbf{v}_8^T\mathbf{u}_2 &= \left(c_2+\ell_2\right) \left(\ell_1 \left(c_3+\ell_3\right)-c_1 \left(c_3-3 \ell_3\right)\right).\\
\end{align*}

Then
$$z^+_2 \coloneq \max\{\mathbf{v}_5^T\mathbf{u}_2,\mathbf{v}_6^T\mathbf{u}_2,\mathbf{v}_7^T\mathbf{u}_2,\mathbf{v}_8^T\mathbf{u}_2\}=
    \mathbf{v}_8^T\mathbf{u}_2,$$ and
$$z^-_2 \coloneq \max\{-\mathbf{v}_5^T\mathbf{u}_2,-\mathbf{v}_6^T\mathbf{u}_2,-\mathbf{v}_7^T\mathbf{u}_2,-\mathbf{v}_8^T\mathbf{u}_2\}=
    -\mathbf{v}_7^T\mathbf{u}_2.$$
\end{lemma}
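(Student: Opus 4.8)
The plan is to mirror the structure of the proof of Lemma~\ref{z_1}. First I would confirm the four inner-product formulas by direct computation: since $\mathbf{u}_2$ has a zero fourth component, each $\mathbf{v}_j^T\mathbf{u}_2$ depends only on the first three coordinates of $\mathbf{v}_j$, so expanding against the definitions in the appendix preamble reproduces the stated expressions. With these in hand, identifying $z_2^+$ and $z_2^-$ reduces to locating the maximizing and minimizing vertex among $\{\mathbf{v}_5,\mathbf{v}_6,\mathbf{v}_7,\mathbf{v}_8\}$, which I would do by examining pairwise differences evaluated through the sparse vertex differences $\mathbf{v}_i-\mathbf{v}_j$ (most coordinates cancel, so each dot product with $\mathbf{u}_2$ is short).

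For $z_2^+$, I would show that $\mathbf{v}_8^T\mathbf{u}_2$ dominates the other three values. Computing the differences yields
\begin{align*}
\mathbf{v}_8^T\mathbf{u}_2 - \mathbf{v}_5^T\mathbf{u}_2 &= 4\ell_3(\ell_1 c_2 + \ell_2 c_1), \\
\mathbf{v}_8^T\mathbf{u}_2 - \mathbf{v}_6^T\mathbf{u}_2 &= 4\ell_2(\ell_1 c_3 + \ell_3 c_1), \\
\mathbf{v}_8^T\mathbf{u}_2 - \mathbf{v}_7^T\mathbf{u}_2 &= 4\ell_1\ell_3(c_2 + \ell_2),
\end{align*}
each of which is nonnegative purely by the standing assumption $c_i\geq0$. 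Hence $z_2^+ = \mathbf{v}_8^T\mathbf{u}_2$ with no case analysis, exactly as for $z_1^+$.

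For $z_2^-$ the goal is to show $\mathbf{v}_7^T\mathbf{u}_2$ is the minimum of the four values, i.e.\ $\mathbf{v}_j^T\mathbf{u}_2 - \mathbf{v}_7^T\mathbf{u}_2 \geq 0$ for $j\in\{5,6,8\}$. The relevant differences are
\begin{align*}
\mathbf{v}_8^T\mathbf{u}_2 - \mathbf{v}_7^T\mathbf{u}_2 &= 4\ell_1\ell_3(c_2+\ell_2), \\
\mathbf{v}_5^T\mathbf{u}_2 - \mathbf{v}_7^T\mathbf{u}_2 &= 4\ell_2\ell_3(\ell_1 - c_1), \\
\mathbf{v}_6^T\mathbf{u}_2 - \mathbf{v}_7^T\mathbf{u}_2 &= 4\ell_1\ell_2\ell_3\left(\frac{c_2}{\ell_2} - \frac{c_1}{\ell_1} - \frac{c_3}{\ell_3} + 1\right).
\end{align*}
The first is unconditionally nonnegative. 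The main obstacle — and the point where this lemma genuinely departs from Lemma~\ref{z_1} — is that the second and third differences are \emph{not} nonnegative under the standing nonnegativity assumption and the ordering~\eqref{eq:omegaprime} alone: they require $\frac{c_1}{\ell_1}\leq 1$ and $\frac{c_1}{\ell_1}+\frac{c_3}{\ell_3}\leq \frac{c_2}{\ell_2}+1$, respectively (indeed, taking $c_i/\ell_i=2$ for all $i$ makes $\mathbf{v}_5^T\mathbf{u}_2<\mathbf{v}_7^T\mathbf{u}_2$). The resolution I would use is that the quantities $z_i^-$ enter the mixed-volume computation only through the branch of $\mathcal{U}(Q)$ arising when $\frac{c_3}{\ell_3}<1$, as is visible in Proposition~\ref{prop:V(Q,Q,R)}; hence it suffices to establish the formula in that regime. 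Restricting to $\frac{c_3}{\ell_3}<1$ and combining with~\eqref{eq:omegaprime}, both inequalities follow at once: $\frac{c_1}{\ell_1}\leq\frac{c_3}{\ell_3}<1$ settles the second difference, while $\frac{c_1}{\ell_1}+\frac{c_3}{\ell_3}<\frac{c_1}{\ell_1}+1\leq\frac{c_2}{\ell_2}+1$ settles the third. Therefore $z_2^- = -\mathbf{v}_7^T\mathbf{u}_2$ throughout the region in which it is invoked, and — unlike $z_1^-$ — no case split is needed.
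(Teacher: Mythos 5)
Your proposal is correct and follows essentially the same route as the paper's proof: the same pairwise differences establish $z_2^+=\mathbf{v}_8^T\mathbf{u}_2$ from nonnegativity alone, and for $z_2^-$ the paper uses exactly your three differences and, like you, invokes the restriction $\frac{c_3}{\ell_3}<1$ (writing ``by assumption,'' since the $z^-$ values only enter the mixed-volume computation on the negated branch of $\mathcal{U}(Q)$). Your explicit counterexample showing that restriction is genuinely needed, and your direct verification of the third inequality where the paper argues by contradiction, are minor refinements of the same argument rather than a different approach.
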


\begin{proof}
The inner product formulas follow from direct computation using the definitions of $\mathbf{v}_j$ and $\mathbf{u}_2$.
We show that $\mathbf{v}_8^T\mathbf{u}_2 \geq \mathbf{v}_j^T\mathbf{u}_2$ for all $j \in \{5,6,7\}$ by computing the pairwise differences:
\begin{align*}
\mathbf{v}_8^T\mathbf{u}_2 - \mathbf{v}_5^T\mathbf{u}_2 &= 4\ell_3(\ell_2 c_1 + \ell_1 c_2) \geq 0,\\
\mathbf{v}_8^T\mathbf{u}_2 - \mathbf{v}_6^T\mathbf{u}_2 &= 4\ell_2(\ell_1 c_3 + \ell_3 c_1) \geq 0,\\
\mathbf{v}_8^T\mathbf{u}_2 - \mathbf{v}_7^T\mathbf{u}_2 &= 4\ell_1\ell_3(c_2 + \ell_2) \geq 0.
\end{align*}

Each inequality holds because of our nonnegativity assumption. Thus $\mathbf{v}_8^T\mathbf{u}_2$ is the maximum of the four values.

We now determine the maximum of $\{-\mathbf{v}_5^T\mathbf{u}_2, -\mathbf{v}_6^T\mathbf{u}_2, -\mathbf{v}_7^T\mathbf{u}_2, -\mathbf{v}_8^T\mathbf{u}_2\}$. Computing pairwise differences we obtain:
\begin{align*}
-\mathbf{v}_7^T\mathbf{u}_2 - (-\mathbf{v}_5^T\mathbf{u}_2) &= 4\ell_2\ell_3(\ell_1 - c_1) \geq 0,\\
-\mathbf{v}_7^T\mathbf{u}_2 - (-\mathbf{v}_8^T\mathbf{u}_2) &= 4\ell_1\ell_3(c_2 + \ell_2) \geq 0,\\
-\mathbf{v}_7^T\mathbf{u}_2 - (-\mathbf{v}_6^T\mathbf{u}_2) &= 4\bigl(\ell_3(\ell_1(c_2 + \ell_2) - c_1\ell_2) - c_3\ell_1\ell_2\bigr)\geq 0.
\end{align*}

The first inequality holds because \eqref{eq:omegaprime} gives $\frac{c_3}{\ell_3}< 1 \Rightarrow \frac{c_1}{\ell_1}< 1$.  The second inequality holds by nonnegativity. For the third inequality, suppose for contradiction, that $-\mathbf{v}_7^T\mathbf{u}_2 - (-\mathbf{v}_6^T\mathbf{u}_2) < 0$. Then, rearranging and dividing by $\ell_1\ell_2\ell_3 > 0$ gives:
$$ 1 - \frac{c_3}{\ell_3} < \frac{c_1}{\ell_1} - \frac{c_2}{\ell_2}. $$

By assumption we have $\frac{c_3}{\ell_3} < 1$, and by \eqref{eq:omegaprime}, $\frac{c_1}{\ell_1} \leq \frac{c_2}{\ell_2}$, which leads to
\[
(0 <)\; 1 - \frac{c_3}{\ell_3} < \frac{c_1}{\ell_1} - \frac{c_2}{\ell_2} \;(\leq 0),
\]
a contradiction. Thus, $-\mathbf{v}_7^T\mathbf{u}_2 \geq -\mathbf{v}_6^T\mathbf{u}_2$, and we conclude $z_2^- = -\mathbf{v}_7^T\mathbf{u}_2$.   
\end{proof}

\begin{lemma}\label{z_3}
The relevant inner products are given by:
\begin{align*}
    \mathbf{v}_5^T\mathbf{u}_3 &= \ell _1 \left(\ell _2 \left(\ell _3-3 c_3\right)+c_2 \left(c_3+\ell _3\right)\right)+c_1 \left(\ell _2 \left(c_3+\ell _3\right)+c_2 \left(c_3-3 \ell _3\right)\right), \\
    \mathbf{v}_6^T\mathbf{u}_3 &= \left(c_1+\ell_1\right) \left(\ell_2 \left(c_3+\ell_3\right)+c_2 \left(c_3-3 \ell_3\right)\right),\\
    \mathbf{v}_7^T\mathbf{u}_3 &= \left(c_2+\ell_2\right) \left(\ell_1 \left(c_3+\ell_3\right)+c_1 \left(c_3-3 \ell_3\right)\right),\\
     \mathbf{v}_8^T\mathbf{u}_3 &=  \left(c_1+\ell_1\right) \left(c_2+\ell_2\right) \left(c_3-3 \ell_3\right).\\
\end{align*}

Then
$$z^+_3 \coloneq \max\{\mathbf{v}_5^T\mathbf{u}_3,\mathbf{v}_6^T\mathbf{u}_3,\mathbf{v}_7^T\mathbf{u}_3,\mathbf{v}_8^T\mathbf{u}_3\}=
    \mathbf{v}_7^T\mathbf{u}_3,$$ and
$$z^-_3 \coloneq \max\{-\mathbf{v}_5^T\mathbf{u}_3,-\mathbf{v}_6^T\mathbf{u}_3,-\mathbf{v}_7^T\mathbf{u}_3,-\mathbf{v}_8^T\mathbf{u}_3\}= 
    -\mathbf{v}_8^T\mathbf{u}_3.$$
\end{lemma}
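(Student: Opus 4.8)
The plan is to follow the two-part template already established in Lemmas~\ref{z_1} and~\ref{z_2}: first record the four inner products $\mathbf{v}_j^T\mathbf{u}_3$ by direct substitution (using that $\mathbf{u}_3$ has a zero fourth component, so only the first three coordinates contribute), and then identify each maximizer by sign-checking pairwise differences against the claimed winner. For $z^+_3$ I would verify $\mathbf{v}_7^T\mathbf{u}_3 \geq \mathbf{v}_j^T\mathbf{u}_3$ for $j \in \{5,6,8\}$; for $z^-_3$ I would verify $-\mathbf{v}_8^T\mathbf{u}_3 \geq -\mathbf{v}_j^T\mathbf{u}_3$ for $j \in \{5,6,7\}$, i.e.\ that $\mathbf{v}_8^T\mathbf{u}_3$ is the smallest of the four values.

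For the $z^+_3$ part, I expect all three differences to factor into manifestly nonnegative products. Writing $a_i = c_i - \ell_i$ and $b_i = c_i + \ell_i$, I anticipate $\mathbf{v}_7^T\mathbf{u}_3 - \mathbf{v}_8^T\mathbf{u}_3 = 4\ell_1\ell_3(c_2+\ell_2)$, together with $\mathbf{v}_7^T\mathbf{u}_3 - \mathbf{v}_6^T\mathbf{u}_3 = 4\ell_3(\ell_1 c_2 - \ell_2 c_1)$ and $\mathbf{v}_7^T\mathbf{u}_3 - \mathbf{v}_5^T\mathbf{u}_3 = 4\ell_2(\ell_1 c_3 - \ell_3 c_1)$; the middle two arise naturally as $(b_3-a_3)(b_1 a_2 - a_1 b_2)$ and $(b_2-a_2)(b_1 a_3 - a_1 b_3)$. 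The first is nonnegative by the nonnegativity assumption, and the latter two are nonnegative by the ordering \eqref{eq:omegaprime}, which supplies $c_1/\ell_1 \le c_2/\ell_2$ and $c_1/\ell_1 \le c_3/\ell_3$. This establishes $z^+_3 = \mathbf{v}_7^T\mathbf{u}_3$ with no further case split.

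For $z^-_3$, two of the three comparisons are again immediate: $\mathbf{v}_7^T\mathbf{u}_3 - \mathbf{v}_8^T\mathbf{u}_3$ (reused from above) and $\mathbf{v}_6^T\mathbf{u}_3 - \mathbf{v}_8^T\mathbf{u}_3 = 4\ell_2\ell_3(c_1+\ell_1)$ are nonnegative purely by nonnegativity. The main obstacle is the remaining comparison $\mathbf{v}_5^T\mathbf{u}_3 - \mathbf{v}_8^T\mathbf{u}_3$, which I expect to simplify to $4\ell_1\ell_2\ell_3\bigl(\tfrac{c_1}{\ell_1} + \tfrac{c_2}{\ell_2} - \tfrac{c_3}{\ell_3} + 1\bigr)$. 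This quantity is \emph{not} nonnegative for all admissible parameters, so it cannot be dispatched by nonnegativity or \eqref{eq:omegaprime} alone; it is nonnegative precisely when $\tfrac{c_3}{\ell_3} \le 1 + \tfrac{c_1}{\ell_1} + \tfrac{c_2}{\ell_2}$. As in Lemma~\ref{z_2}, this is exactly where the working regime $\tfrac{c_3}{\ell_3} < 1$ enters, which is the only regime in which $z^-$ is invoked (see Proposition~\ref{prop:V(Q,Q,R)}): since each $c_i/\ell_i \ge 0$, we get $\tfrac{c_3}{\ell_3} < 1 \le 1 + \tfrac{c_1}{\ell_1} + \tfrac{c_2}{\ell_2}$, making the bracket nonnegative and yielding $z^-_3 = -\mathbf{v}_8^T\mathbf{u}_3$. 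I would flag explicitly that this single comparison is the only place in the lemma requiring the assumption $\tfrac{c_3}{\ell_3} < 1$, mirroring the role that $\tfrac{c_3}{\ell_3}<1$ plays in the proof of Lemma~\ref{z_2}.
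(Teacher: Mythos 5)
Your proposal is correct and follows essentially the same route as the paper: direct computation of the four inner products followed by sign analysis of pairwise differences, with your $z^+_3$ argument identical to the paper's and your $z^-_3$ argument turning on the same critical comparison $\mathbf{v}_5^T\mathbf{u}_3 - \mathbf{v}_8^T\mathbf{u}_3 = 4\ell_1\ell_2\ell_3\left(\tfrac{c_1}{\ell_1}+\tfrac{c_2}{\ell_2}-\tfrac{c_3}{\ell_3}+1\right)$, which the paper resolves equivalently via a contradiction argument invoking $\tfrac{c_3}{\ell_3}<1$. The only cosmetic difference is that you compare $-\mathbf{v}_6^T\mathbf{u}_3$ and $-\mathbf{v}_7^T\mathbf{u}_3$ directly against $-\mathbf{v}_8^T\mathbf{u}_3$ by nonnegativity, whereas the paper chains them through $-\mathbf{v}_5^T\mathbf{u}_3$ using \eqref{eq:omegaprime}; your explicit flag that the $z^-_3$ claim requires the working regime $\tfrac{c_3}{\ell_3}<1$ correctly identifies a hypothesis the paper uses in its proof but leaves implicit in the lemma statement.
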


\begin{proof}
The inner product formulas follow from direct computation using the definitions of $\mathbf{v}_j$ and $\mathbf{u}_3$.
We show that $\mathbf{v}_7^T\mathbf{u}_3 \geq \mathbf{v}_j^T\mathbf{u}_3$ for all $j \in \{5,6,8\}$ by computing the pairwise differences:
\begin{align*}
\mathbf{v}_7^T\mathbf{u}_3 - \mathbf{v}_5^T\mathbf{u}_3 &= 4\ell_2(\ell_1 c_3 - \ell_3 c_1) \geq 0,\\
\mathbf{v}_7^T\mathbf{u}_3 - \mathbf{v}_6^T\mathbf{u}_3 &= 4\ell_3(\ell_1 c_2 - \ell_2 c_1) \geq 0, \\
\mathbf{v}_7^T\mathbf{u}_3 - \mathbf{v}_8^T\mathbf{u}_3 &= 4\ell_1\ell_3(c_2 + \ell_2) \geq 0.
\end{align*}

The first two inequalities hold by \eqref{eq:omegaprime}. The third inequality holds because of nonnegativity.  Thus $\mathbf{v}_7^T\mathbf{u}_3$ is the maximum of the four values.

We now determine the maximum of $\{-\mathbf{v}_5^T\mathbf{u}_3, -\mathbf{v}_6^T\mathbf{u}_3, -\mathbf{v}_7^T\mathbf{u}_3, -\mathbf{v}_8^T\mathbf{u}_3\}$. Computing pairwise differences:
\begin{align*}
-\mathbf{v}_5^T\mathbf{u}_3 - (-\mathbf{v}_7^T\mathbf{u}_3) &= 4\ell_2(\ell_1 c_3 - \ell_3 c_1) \geq 0, \\
-\mathbf{v}_5^T\mathbf{u}_3 - (-\mathbf{v}_6^T\mathbf{u}_3) &= 4\ell_1(\ell_2 c_3 - \ell_3 c_2) \geq 0,\\
-\mathbf{v}_8^T\mathbf{u}_3 - (-\mathbf{v}_5^T\mathbf{u}_3) &= 4\ell_2\ell_3(c_1 + \ell_1) - 4\ell_1(\ell_2 c_3 - \ell_3 c_2) \geq 0.
\end{align*}

The first two inequalities hold by \eqref{eq:omegaprime}. For the third, by way of contradiction, suppose $-\mathbf{v}_8^T\mathbf{u}_3 - (-\mathbf{v}_5^T\mathbf{u}_3) < 0$. Rearranging, we obtain 
$$\frac{c_3}{\ell_3} - 1 > \frac{c_1}{\ell_1} + \frac{c_2}{\ell_2}.$$ 
By assumption we have  $\frac{c_3}{\ell_3} - 1 < 0$ and $\frac{c_1}{\ell_1} + \frac{c_2}{\ell_2} \geq 0$, which leads to
$$(0 >) \; \frac{c_3}{\ell_3} - 1 > \frac{c_1}{\ell_1} + \frac{c_2}{\ell_2} \; (>0),$$ 
a contradiction.  Thus $-\mathbf{v}_8^T\mathbf{u}_3 \geq -\mathbf{v}_5^T\mathbf{u}_3$, and we conclude $z_3^- = -\mathbf{v}_8^T\mathbf{u}_3$.   

\end{proof}

\begin{lemma}\label{z_4}
The relevant inner products are given by:
\begin{align*}
   \mathbf{v}_5^T\mathbf{u}_4 &=  \left(c_1-\ell_1\right) \left(c_2-\ell_2\right) \left(c_3-3 \ell_3\right),\\
    \mathbf{v}_6^T\mathbf{u}_4 &=  \left(c_2-\ell_2\right) \left(c_1 \left(c_3-3 \ell_3\right)-\ell_1 \left(c_3+\ell_3\right)\right),\\
     \mathbf{v}_7^T\mathbf{u}_4 &= \left(c_1-\ell_1\right) \left(c_2 \left(c_3-3 \ell_3\right)-\ell_2 \left(c_3+\ell_3\right)\right),\\
      \mathbf{v}_8^T\mathbf{u}_4 &=  c_1 \left(c_2 \left(c_3-3 \ell_3\right)-\ell_2 \left(c_3+\ell_3\right)\right)-\ell_1 \left(\ell_2 \left(3 c_3-\ell_3\right)+c_2 \left(c_3+\ell_3\right)\right).\\
\end{align*}

Then
$$z^+_4 \coloneq \max\{\mathbf{v}_5^T\mathbf{u}_4,\mathbf{v}_6^T\mathbf{u}_4,\mathbf{v}_7^T\mathbf{u}_4,\mathbf{v}_8^T\mathbf{u}_4\}= \begin{cases}
        \mathbf{v}_5^T\mathbf{u}_4, &\text{if }\frac{c_1}{\ell_1}\geq1,\\
        \mathbf{v}_7^T\mathbf{u}_4,&\text{if }\frac{c_1}{\ell_1}<1,
\end{cases}
$$
and
$$z^-_4 \coloneq \max\{\mathbf{v}_5^T\mathbf{u}_4,\mathbf{v}_6^T\mathbf{u}_4,\mathbf{v}_7^T\mathbf{u}_4,\mathbf{v}_8^T\mathbf{u}_4\} = \begin{cases}
        -\mathbf{v}_8^T\mathbf{u}_4, &\text{if }\frac{c_1}{\ell_1} +\frac{c_2}{\ell_2} + \frac{c_3}{\ell_3}\geq1,\\
    -\mathbf{v}_5^T\mathbf{u}_4,&\text{if } \frac{c_1}{\ell_1} +\frac{c_2}{\ell_2} + \frac{c_3}{\ell_3}<1.
\end{cases}
$$
\end{lemma}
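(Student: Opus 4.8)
The plan is to follow the template established in the proofs of Lemmas~\ref{z_1}--\ref{z_3}: first confirm the four inner-product expressions by directly evaluating $\mathbf{v}_j^T\mathbf{u}_4$ for $j \in \{5,6,7,8\}$ (recalling that the fourth coordinate of $\mathbf{u}_4$ vanishes, so only the first three coordinates contribute), and then identify each of the two maxima by examining pairwise differences. The guiding principle is to locate, for each of $z_4^+$ and $z_4^-$, a single \emph{pivotal} difference whose sign is exactly the stated case condition, while showing that all remaining comparisons are resolved unconditionally by \eqref{eq:omegaprime} and the nonnegativity of $\mathbf{c}$.

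For $z_4^+$, I would first record the two unconditional dominances $\mathbf{v}_7^T\mathbf{u}_4 - \mathbf{v}_8^T\mathbf{u}_4 = 4\ell_1(c_2\ell_3 + \ell_2 c_3) \geq 0$ and $\mathbf{v}_7^T\mathbf{u}_4 - \mathbf{v}_6^T\mathbf{u}_4 = 4\ell_3(c_2\ell_1 - c_1\ell_2) \geq 0$, the latter holding by \eqref{eq:omegaprime}. The pivotal comparison is $\mathbf{v}_5^T\mathbf{u}_4 - \mathbf{v}_7^T\mathbf{u}_4 = 4\ell_2\ell_3(c_1 - \ell_1)$, whose sign is governed precisely by whether $\frac{c_1}{\ell_1} \geq 1$. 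When $\frac{c_1}{\ell_1} < 1$ this difference is negative, so $\mathbf{v}_7^T\mathbf{u}_4$ dominates the other three and $z_4^+ = \mathbf{v}_7^T\mathbf{u}_4$. When $\frac{c_1}{\ell_1} \geq 1$, the ordering \eqref{eq:omegaprime} gives $\frac{c_2}{\ell_2} \geq 1$ as well, so that $\mathbf{v}_5^T\mathbf{u}_4 - \mathbf{v}_6^T\mathbf{u}_4 = 4\ell_1\ell_3(c_2 - \ell_2) \geq 0$; combined with $\mathbf{v}_5^T\mathbf{u}_4 \geq \mathbf{v}_7^T\mathbf{u}_4 \geq \mathbf{v}_8^T\mathbf{u}_4$, this yields $z_4^+ = \mathbf{v}_5^T\mathbf{u}_4$.

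For $z_4^-$, I would analogously establish the unconditional dominances $\mathbf{v}_6^T\mathbf{u}_4 - \mathbf{v}_8^T\mathbf{u}_4 = 4\ell_2(c_1\ell_3 + \ell_1 c_3) \geq 0$ together with $\mathbf{v}_7^T\mathbf{u}_4 - \mathbf{v}_8^T\mathbf{u}_4 \geq 0$ (from the previous step), which together show that $-\mathbf{v}_8^T\mathbf{u}_4$ always dominates both $-\mathbf{v}_6^T\mathbf{u}_4$ and $-\mathbf{v}_7^T\mathbf{u}_4$. The pivotal comparison here is $\mathbf{v}_5^T\mathbf{u}_4 - \mathbf{v}_8^T\mathbf{u}_4 = 4\ell_1\ell_2\ell_3\left(\frac{c_1}{\ell_1} + \frac{c_2}{\ell_2} + \frac{c_3}{\ell_3} - 1\right)$, whose sign is exactly the stated case condition. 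If the sum is at least $1$, then $-\mathbf{v}_8^T\mathbf{u}_4 \geq -\mathbf{v}_5^T\mathbf{u}_4$ and hence $z_4^- = -\mathbf{v}_8^T\mathbf{u}_4$. If the sum is strictly below $1$, then since all three ratios are nonnegative we must have $\frac{c_1}{\ell_1} < 1$ and $\frac{c_2}{\ell_2} < 1$; these two facts make $\mathbf{v}_7^T\mathbf{u}_4 - \mathbf{v}_5^T\mathbf{u}_4 = 4\ell_2\ell_3(\ell_1 - c_1) \geq 0$ and $\mathbf{v}_6^T\mathbf{u}_4 - \mathbf{v}_5^T\mathbf{u}_4 = 4\ell_1\ell_3(\ell_2 - c_2) \geq 0$, so $-\mathbf{v}_5^T\mathbf{u}_4$ dominates and $z_4^- = -\mathbf{v}_5^T\mathbf{u}_4$.

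I expect the main obstacle to be bookkeeping rather than anything conceptual: unlike the earlier lemmas, where a single vertex dominates outright, here the maximizer switches between regions, so the care lies in verifying that the \emph{non-pivotal} comparisons genuinely hold throughout each region and not merely at the boundary. The observation that makes this clean is that the sum condition $\frac{c_1}{\ell_1} + \frac{c_2}{\ell_2} + \frac{c_3}{\ell_3} < 1$ forces the two smaller ratios individually below $1$ (via nonnegativity and \eqref{eq:omegaprime}), which is exactly the input needed to pin down $-\mathbf{v}_5^T\mathbf{u}_4$ as the maximizer in the final case.
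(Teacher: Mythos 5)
Your proposal is correct and follows essentially the same pairwise-difference argument as the paper, resting on the same two pivotal comparisons $\mathbf{v}_5^T\mathbf{u}_4 - \mathbf{v}_7^T\mathbf{u}_4 = 4\ell_2\ell_3(c_1-\ell_1)$ and $\mathbf{v}_5^T\mathbf{u}_4 - \mathbf{v}_8^T\mathbf{u}_4 = 4\ell_1\ell_2\ell_3\bigl(\tfrac{c_1}{\ell_1}+\tfrac{c_2}{\ell_2}+\tfrac{c_3}{\ell_3}-1\bigr)$, with all your stated identities checking out. The only difference is cosmetic: where the paper settles the non-pivotal comparisons by transitivity (through $\mathbf{v}_7$ for $z_4^+$ and through $-\mathbf{v}_8$ for $z_4^-$), you add a few correct but redundant direct checks, such as $\mathbf{v}_5^T\mathbf{u}_4 - \mathbf{v}_6^T\mathbf{u}_4 = 4\ell_1\ell_3(c_2-\ell_2)$.
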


\begin{proof}
The inner product formulas follow from direct computation using the definitions of $\mathbf{v}_j$ and $\mathbf{u}_4$.
We compute the pairwise differences:
\begin{align*}
\mathbf{v}_7^T\mathbf{u}_4 - \mathbf{v}_6^T\mathbf{u}_4 &= 4\ell_3(\ell_1 c_2 - \ell_2 c_1) \geq 0, \\
\mathbf{v}_7^T\mathbf{u}_4 - \mathbf{v}_8^T\mathbf{u}_4 &= 4\ell_1(\ell_3 c_2 + \ell_2 c_3) \geq 0,\\
\mathbf{v}_5^T\mathbf{u}_4 - \mathbf{v}_7^T\mathbf{u}_4 &= 4\ell_2\ell_3(c_1 - \ell_1).
\end{align*}

The first inequality holds by \eqref{eq:omegaprime}. The second inequality holds because of nonnegativity. For the third expression, consider two cases. If $\frac{c_1}{\ell_1} \geq 1$, then $\mathbf{v}_5^T\mathbf{u}_4 - \mathbf{v}_7^T\mathbf{u}_4 \geq 0$ and $z^+_4 = \mathbf{v}_5^T\mathbf{u}_4$. Otherwise, $\mathbf{v}_5^T\mathbf{u}_4 - \mathbf{v}_7^T\mathbf{u}_4 < 0$ and $z^+_4 = \mathbf{v}_7^T\mathbf{u}_4$.

We now determine the maximum of $\{-\mathbf{v}_5^T\mathbf{u}_4, -\mathbf{v}_6^T\mathbf{u}_4, -\mathbf{v}_7^T\mathbf{u}_4, -\mathbf{v}_8^T\mathbf{u}_4\}$. Computing pairwise differences:
\begin{align*}
-\mathbf{v}_8^T\mathbf{u}_4 - (-\mathbf{v}_6^T\mathbf{u}_4) &= 4\ell_2(c_3\ell_1 + c_1\ell_3) \geq 0,\\
-\mathbf{v}_8^T\mathbf{u}_4 - (-\mathbf{v}_7^T\mathbf{u}_4) &= 4\ell_1(c_3\ell_2 + c_2\ell_3) \geq 0, \\
-\mathbf{v}_8^T\mathbf{u}_4 - (-\mathbf{v}_5^T\mathbf{u}_4) &= 4(c_3\ell_1\ell_2 + \ell_3(c_2\ell_1 + \ell_2(c_1 - \ell_1))).
\end{align*}

The first and second inequalities hold because of nonnegativity. For the third expression, we have

\begin{align*}
&\quad 4(c_3\ell_1\ell_2 + \ell_3(c_2\ell_1 + \ell_2(c_1 - \ell_1))) &\geq 0\\
\Longleftrightarrow &\quad c_3\ell_1\ell_2 + c_2\ell_1\ell_3 + c_1\ell_2\ell_3 - \ell_1\ell_2\ell_3 &\geq 0\\
\Longleftrightarrow &\quad \frac{c_1}{\ell_1} + \frac{c_2}{\ell_2} + \frac{c_3}{\ell_3} - 1 &\geq 0
\end{align*}

Thus, if $\frac{c_1}{\ell_1} + \frac{c_2}{\ell_2} + \frac{c_3}{\ell_3} \geq 1$, we have $-\mathbf{v}_8^T\mathbf{u}_4 \geq -\mathbf{v}_5^T\mathbf{u}_4$, and $z_4^- = -\mathbf{v}_8^T\mathbf{u}_4$. Otherwise,  $-\mathbf{v}_5^T\mathbf{u}_4>-\mathbf{v}_8^T\mathbf{u}_4$, and $z_4^- = -\mathbf{v}_5^T\mathbf{u}_4$.   
\end{proof}

\begin{lemma}\label{z_5}
The relevant inner products are given by:
    \begin{align*}
        \mathbf{v}_1^T\mathbf{u}_5 &=  \left(c_2-\ell_2\right) \left(\ell_1 \left(\ell_3-c_3\right) - c_1 \left(c_3+3 \ell_3\right)\right),\\
        \mathbf{v}_2^T\mathbf{u}_5 &= -\left(c_1+\ell_1\right) \left(c_2-\ell_2\right) \left(c_3+3 \ell_3\right),\\
        \mathbf{v}_3^T\mathbf{u}_5 &= \ell_1 \left(c_2 \left(\ell_3-c_3\right)-\ell_2 \left(3 c_3+\ell_3\right)\right)-c_1 \left(\ell_2 \left(\ell_3-c_3\right)+c_2 \left(c_3+3 \ell_3\right)\right),\\
         \mathbf{v}_4^T\mathbf{u}_5 &= \left(c_1+\ell_1\right) \left(\ell_2 \left(c_3-\ell_3\right) -c_2 \left(c_3+3 \ell_3\right)\right).
    \end{align*}
    Then
    $$z_5 \coloneq \max\{\mathbf{v}_1^T\mathbf{u}_5,\mathbf{v}_2^T\mathbf{u}_5,\mathbf{v}_3^T\mathbf{u}_5,\mathbf{v}_4^T\mathbf{u}_5\} = \begin{cases}
    \mathbf{v}_1^T\mathbf{u}_5,&\text{if $\frac{c_2}{\ell_2}\geq1$},\\
    \mathbf{v}_2^T\mathbf{u}_5,&\text{if $\frac{c_2}{\ell_2}<1$}.
\end{cases}$$
\end{lemma}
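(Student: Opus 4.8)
The plan is to follow the template established in Lemmas~\ref{z_1}--\ref{z_4}: first record the four inner products $\mathbf{v}_j^T\mathbf{u}_5$ as a direct expansion using the coordinate definitions of $\mathbf{v}_1,\dots,\mathbf{v}_4$ and $\mathbf{u}_5$ from the appendix preamble, then identify the maximizer by sign analysis of pairwise differences. The structural observation I would exploit is that $\mathbf{v}_1$ and $\mathbf{v}_2$ differ only in their $x_1$-coordinate ($c_1-\ell_1$ versus $c_1+\ell_1$), so the claimed case split between them should be governed by a single clean difference, while the remaining vertices $\mathbf{v}_3,\mathbf{v}_4$ ought to be dominated unconditionally.

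Concretely, I would first show that $\mathbf{v}_2^T\mathbf{u}_5$ dominates both $\mathbf{v}_4^T\mathbf{u}_5$ and $\mathbf{v}_3^T\mathbf{u}_5$ regardless of the parameter regime. The computation should yield
\[
\mathbf{v}_2^T\mathbf{u}_5 - \mathbf{v}_4^T\mathbf{u}_5 = 4\ell_2\ell_3(c_1+\ell_1) \geq 0,
\]
which is immediate from nonnegativity, and
\[
\mathbf{v}_2^T\mathbf{u}_5 - \mathbf{v}_3^T\mathbf{u}_5 = 4\ell_1\ell_2\ell_3\left(\frac{c_1}{\ell_1} - \frac{c_2}{\ell_2} + \frac{c_3}{\ell_3} + 1\right) \geq 0,
\]
where the parenthesized quantity is nonnegative because \eqref{eq:omegaprime} gives $\tfrac{c_3}{\ell_3}\geq\tfrac{c_2}{\ell_2}$, so it is at least $\tfrac{c_1}{\ell_1}+1>0$. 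Together these reduce the maximization to the pair $\{\mathbf{v}_1^T\mathbf{u}_5,\mathbf{v}_2^T\mathbf{u}_5\}$, since $\max\{\mathbf{v}_1^T\mathbf{u}_5,\mathbf{v}_2^T\mathbf{u}_5\}\geq\mathbf{v}_2^T\mathbf{u}_5\geq\mathbf{v}_3^T\mathbf{u}_5,\mathbf{v}_4^T\mathbf{u}_5$. The decisive final step is then the single comparison
\[
\mathbf{v}_1^T\mathbf{u}_5 - \mathbf{v}_2^T\mathbf{u}_5 = 4\ell_1\ell_3(c_2-\ell_2),
\]
whose sign is exactly that of $c_2-\ell_2$. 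Hence $\mathbf{v}_1^T\mathbf{u}_5\geq\mathbf{v}_2^T\mathbf{u}_5$ precisely when $\tfrac{c_2}{\ell_2}\geq 1$, giving $z_5=\mathbf{v}_1^T\mathbf{u}_5$ in that regime and $z_5=\mathbf{v}_2^T\mathbf{u}_5$ when $\tfrac{c_2}{\ell_2}<1$, as claimed.

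The one genuine obstacle is the $\mathbf{v}_2$-versus-$\mathbf{v}_3$ comparison. Unlike the other differences, it does not factor into manifestly nonnegative terms, and its nonnegativity truly requires the labeling hypothesis \eqref{eq:omegaprime} rather than nonnegativity of the centers alone; I would verify the coefficient bookkeeping in that expansion carefully, as the interplay of the three signed factors makes it the likeliest spot for a sign slip (analogous to the contradiction arguments needed in Lemmas~\ref{z_2} and~\ref{z_3}). Everything else is routine algebra combined with the standing assumptions $c_i\geq 0$ and $\ell_i>0$.
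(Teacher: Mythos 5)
Your proposal is correct: all three of your stated differences check out by direct expansion, namely $\mathbf{v}_2^T\mathbf{u}_5-\mathbf{v}_4^T\mathbf{u}_5=4\ell_2\ell_3(c_1+\ell_1)$, $\mathbf{v}_2^T\mathbf{u}_5-\mathbf{v}_3^T\mathbf{u}_5=4\ell_1\ell_2\ell_3\left(\tfrac{c_1}{\ell_1}-\tfrac{c_2}{\ell_2}+\tfrac{c_3}{\ell_3}+1\right)$, and $\mathbf{v}_1^T\mathbf{u}_5-\mathbf{v}_2^T\mathbf{u}_5=4\ell_1\ell_3(c_2-\ell_2)$, and the reduction $\max\{\mathbf{v}_1^T\mathbf{u}_5,\mathbf{v}_2^T\mathbf{u}_5\}\geq\mathbf{v}_2^T\mathbf{u}_5\geq\mathbf{v}_3^T\mathbf{u}_5,\mathbf{v}_4^T\mathbf{u}_5$ is logically complete. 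However, you take a slightly different route from the paper: the paper anchors the two unconditional dominations at $\mathbf{v}_1$ rather than $\mathbf{v}_2$, computing $\mathbf{v}_1^T\mathbf{u}_5-\mathbf{v}_3^T\mathbf{u}_5=4\ell_2(c_1\ell_3+\ell_1 c_3)\geq 0$ and $\mathbf{v}_1^T\mathbf{u}_5-\mathbf{v}_4^T\mathbf{u}_5=4\ell_3(c_1\ell_2+\ell_1 c_2)\geq 0$, both of which are immediate from $c_i\geq 0$ and $\ell_i>0$ alone, before settling $\mathbf{v}_1$ versus $\mathbf{v}_2$ by the same decisive difference $4\ell_1\ell_3(c_2-\ell_2)$. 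The practical consequence is that the comparison you correctly flagged as the delicate one --- $\mathbf{v}_2$ versus $\mathbf{v}_3$, whose nonnegativity genuinely requires $\tfrac{c_2}{\ell_2}\leq\tfrac{c_3}{\ell_3}$ from \eqref{eq:omegaprime} (take $c_1=c_3=0$ and $c_2/\ell_2>1$ to see it fail otherwise) --- is avoided entirely by the paper's choice of anchor: its proof of this lemma never invokes \eqref{eq:omegaprime} at all, which also matches the pattern in Lemma~\ref{z_6}, where $\mathbf{v}_1$ again serves as the nonnegativity-only dominator. Both arguments are complete and yield the identical case split on the sign of $c_2-\ell_2$; yours simply buys the reduction at the cost of one appeal to the ordering hypothesis, while the paper's is marginally more robust in that it is independent of the variable labeling.
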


\begin{proof}
The inner product formulas follow from direct computation using the definitions of $\mathbf{v}_j$ and $\mathbf{u}_5$.
We compute the pairwise differences:
\begin{align*}
\mathbf{v}_1^T\mathbf{u}_5 - \mathbf{v}_3^T\mathbf{u}_5 &= 4\ell_2(c_1\ell_3 + \ell_1c_3) \geq 0,\\
\mathbf{v}_1^T\mathbf{u}_5 - \mathbf{v}_4^T\mathbf{u}_5 &= 4\ell_3( c_1\ell_2 + \ell_1 c_2) \geq 0,\\
\mathbf{v}_1^T\mathbf{u}_5 - \mathbf{v}_2^T\mathbf{u}_5 &= 4\ell_1\ell_3(c_2 - \ell_2).
\end{align*}

The first two inequalities hold because of nonnegativity.  For the third expression, consider two cases. If $\frac{c_2}{\ell_2} \geq 1$, then $\mathbf{v}_1^T\mathbf{u}_5 - \mathbf{v}_2^T\mathbf{u}_5 \geq 0$ and $z_5 = \mathbf{v}_1^T\mathbf{u}_5$. Otherwise, $\mathbf{v}_1^T\mathbf{u}_5 - \mathbf{v}_2^T\mathbf{u}_5 < 0$ and $z_5 = \mathbf{v}_2^T\mathbf{u}_5$.   
\end{proof}

\begin{lemma}\label{z_6}
The relevant inner products are given by:
    \begin{align*}
     \mathbf{v}_1^T\mathbf{u}_6 &= \left(c_1-\ell_1\right) \left(\ell_2 \left(\ell_3-c_3\right) - c_2 \left(c_3+3 \ell_3\right)\right),\\
   \mathbf{v}_2^T\mathbf{u}_6 &= \ell_1 \left(c_3 \left(c_2-3 \ell_2\right)-\ell_3 \left(c_2+\ell_2\right)\right)-c_1 \left(\ell_2 \left(c_3-\ell_3\right)+c_2 \left(c_3+3 \ell_3\right)\right),\\
   \mathbf{v}_3^T\mathbf{u}_6 &=  -\left(c_1-\ell_1\right) \left(c_2+\ell_2\right) \left(c_3+3 \ell_3\right),\\
    \mathbf{v}_4^T\mathbf{u}_6 &= \left(c_2+\ell_2\right) \left(\ell_1 \left(c_3-\ell_3\right)-c_1 \left(c_3+3 \ell_3\right)\right).\\
\end{align*}
Then
$$z_6 \coloneq \max\{\mathbf{v}_1^T\mathbf{u}_6,\mathbf{v}_2^T\mathbf{u}_6,\mathbf{v}_3^T\mathbf{u}_6,\mathbf{v}_4^T\mathbf{u}_6\}= \begin{cases}
    \mathbf{v}_1^T\mathbf{u}_6,&\text{if $\frac{c_1}{\ell_1}\geq1$},\\
    \mathbf{v}_3^T\mathbf{u}_6,&\text{if $\frac{c_1}{\ell_1}<1$}.
\end{cases}$$
\end{lemma}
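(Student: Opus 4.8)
The plan is to follow the same template used in Lemmas~\ref{z_1}--\ref{z_5}: after confirming the four inner-product formulas by direct substitution of the definitions of $\mathbf{v}_1,\dots,\mathbf{v}_4$ and $\mathbf{u}_6$, I would identify $\mathbf{v}_3$ and $\mathbf{v}_1$ as the only two candidate maximizers and show, via pairwise differences, that which one attains the maximum is governed entirely by the sign of $c_1/\ell_1 - 1$. The case split in the statement is exactly the threshold $c_1/\ell_1 \gtrless 1$, so I expect the pivotal comparison to be between $\mathbf{v}_1^T\mathbf{u}_6$ and $\mathbf{v}_3^T\mathbf{u}_6$.

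First I would establish that $\mathbf{v}_3^T\mathbf{u}_6$ dominates both $\mathbf{v}_2^T\mathbf{u}_6$ and $\mathbf{v}_4^T\mathbf{u}_6$ unconditionally. The difference $\mathbf{v}_3^T\mathbf{u}_6 - \mathbf{v}_4^T\mathbf{u}_6$ simplifies to $4\ell_1\ell_3(c_2 + \ell_2)$, which is nonnegative by the nonnegativity assumption on $\mathbf{c}$. The difference $\mathbf{v}_3^T\mathbf{u}_6 - \mathbf{v}_2^T\mathbf{u}_6$ simplifies to $4\ell_3(\ell_1 c_2 - \ell_2 c_1) + 4\ell_1\ell_2(c_3 + \ell_3)$; here the first term is nonnegative by the ordering \eqref{eq:omegaprime}, which gives $c_1/\ell_1 \leq c_2/\ell_2$ and hence $\ell_1 c_2 \geq \ell_2 c_1$, while the second is nonnegative since $c_3 + \ell_3 > 0$. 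This shows $\mathbf{v}_3^T\mathbf{u}_6 \geq \max\{\mathbf{v}_2^T\mathbf{u}_6,\mathbf{v}_4^T\mathbf{u}_6\}$ in every configuration.

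Next I would compute the governing difference $\mathbf{v}_1^T\mathbf{u}_6 - \mathbf{v}_3^T\mathbf{u}_6$, which I expect to collapse cleanly to $4\ell_2\ell_3(c_1 - \ell_1)$ after the two factors of $(c_1-\ell_1)$ are pulled out and the bracketed $c_2$- and $c_3$-terms cancel. Its sign is then dictated solely by whether $c_1/\ell_1 \geq 1$. When $c_1/\ell_1 \geq 1$ we obtain $\mathbf{v}_1^T\mathbf{u}_6 \geq \mathbf{v}_3^T\mathbf{u}_6 \geq \max\{\mathbf{v}_2^T\mathbf{u}_6, \mathbf{v}_4^T\mathbf{u}_6\}$, so $z_6 = \mathbf{v}_1^T\mathbf{u}_6$; when $c_1/\ell_1 < 1$ we instead have $\mathbf{v}_3^T\mathbf{u}_6 > \mathbf{v}_1^T\mathbf{u}_6$ while $\mathbf{v}_3$ still dominates $\mathbf{v}_2$ and $\mathbf{v}_4$, so $z_6 = \mathbf{v}_3^T\mathbf{u}_6$, matching the claimed case split by transitivity.

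I do not anticipate a genuine obstacle, as the argument is structurally identical to the preceding lemmas and reduces to a handful of sign checks. The one point requiring care is the comparison of $\mathbf{v}_3$ against $\mathbf{v}_2$: unlike the other pairwise inequalities, which follow from nonnegativity alone, this one genuinely invokes the labeling condition \eqref{eq:omegaprime}. The only real risk is arithmetic, since the inner-product expressions for $\mathbf{u}_6$ are bulky; I would verify each simplified difference exactly, as a stray sign would propagate into the wrong maximizer.
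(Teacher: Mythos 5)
Your proof is correct and uses essentially the paper's method: pairwise differences culminating in the same decisive comparison $\mathbf{v}_1^T\mathbf{u}_6 - \mathbf{v}_3^T\mathbf{u}_6 = 4\ell_2\ell_3(c_1-\ell_1)$, and all three of your simplified differences check out algebraically, including $\mathbf{v}_3^T\mathbf{u}_6 - \mathbf{v}_2^T\mathbf{u}_6 = 4\ell_3(\ell_1 c_2 - \ell_2 c_1) + 4\ell_1\ell_2(c_3+\ell_3)$. The only variation is your choice of anchor: the paper instead shows $\mathbf{v}_1^T\mathbf{u}_6$ dominates the other two candidates via $\mathbf{v}_1^T\mathbf{u}_6-\mathbf{v}_2^T\mathbf{u}_6 = 4\ell_1(c_2\ell_3+\ell_2 c_3)$ and $\mathbf{v}_1^T\mathbf{u}_6-\mathbf{v}_4^T\mathbf{u}_6 = 4\ell_3(\ell_2 c_1+\ell_1 c_2)$, both nonnegative from $c_i\geq 0$ alone, whereas anchoring at $\mathbf{v}_3$ forces one extra invocation of the ordering \eqref{eq:omegaprime} for the $\mathbf{v}_2$ comparison---logically harmless, just a slightly heavier hypothesis load than the paper's version.
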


\begin{proof}
The inner product formulas follow from direct computation using the definitions of $\mathbf{v}_j$ and $\mathbf{u}_6$.
We compute the pairwise differences:
\begin{align*}
\mathbf{v}_1^T\mathbf{u}_6 - \mathbf{v}_2^T\mathbf{u}_6 &= 4\ell_1(c_2\ell_3 + \ell_2 c_3) \geq 0,\\
\mathbf{v}_1^T\mathbf{u}_6 - \mathbf{v}_4^T\mathbf{u}_6 &= 4\ell_3(\ell_2 c_1 + \ell_1 c_2) \geq 0,\\
\mathbf{v}_1^T\mathbf{u}_6 - \mathbf{v}_3^T\mathbf{u}_6 &= 4\ell_2\ell_3(c_1 - \ell_1).
\end{align*}

The first and second inequalities hold because of nonnegativity. For the third expression, consider two cases. If $\frac{c_1}{\ell_1} \geq 1$, then $\mathbf{v}_1^T\mathbf{u}_6 - \mathbf{v}_3^T\mathbf{u}_6 \geq 0$ and $z_6 = \mathbf{v}_1^T\mathbf{u}_6$. Otherwise, $\mathbf{v}_1^T\mathbf{u}_6 - \mathbf{v}_3^T\mathbf{u}_6 < 0$ and $z_6 = \mathbf{v}_3^T\mathbf{u}_6$.

\end{proof}

\begin{lemma}\label{z_7}
The relevant inner products are given by:
    \begin{align*}
       \mathbf{v}_1^T\mathbf{u}_7 &= \ell_1 \left(c_3 \left(c_2-3 \ell_2\right)-\ell_3 \left(c_2+\ell_2\right)\right)+c_1 \left(\ell_2 \left(c_3-\ell_3\right)+c_2 \left(c_3+3 \ell_3\right)\right),\\
        \mathbf{v}_2^T\mathbf{u}_7 &= \left(c_1+\ell_1\right) \left(\ell_2 \left(c_3-\ell_3\right)+c_2 \left(c_3+3 \ell_3\right)\right),\\
        \mathbf{v}_3^T\mathbf{u}_7 &= \left(c_2+\ell_2\right) \left(\ell_1 \left(c_3-\ell_3\right)+c_1 \left(c_3+3 \ell_3\right)\right),\\
        \mathbf{v}_4^T\mathbf{u}_7 &= \left(c_1+\ell_1\right) \left(c_2+\ell_2\right) \left(c_3+3 \ell_3\right).
    \end{align*}    
    Then
    $$z_7\coloneq \max\{\mathbf{v}_1^T\mathbf{u}_7,\mathbf{v}_2^T\mathbf{u}_7,\mathbf{v}_3^T\mathbf{u}_7,\mathbf{v}_4^T\mathbf{u}_7\} = \mathbf{v}_4^T\mathbf{u}_7.$$
\end{lemma}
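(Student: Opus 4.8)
The plan is to follow the template established in Lemmas \ref{z_1}--\ref{z_6}: obtain the four inner products $\mathbf{v}_j^T\mathbf{u}_7$ by direct computation, and then locate the maximizer by examining the pairwise differences. Since $\mathbf{u}_7 \in \mathcal{U}(R)$ places positive weight on the $x_1$- and $x_2$-coordinates and weight $-1$ on the $f$-coordinate, one expects the vertex of $Q$ that is extremal in the $(x_1,x_2)$-directions, namely $\mathbf{v}_4$ with $x_1 = c_1+\ell_1$ and $x_2 = c_2+\ell_2$, to dominate the linear functional. Because the competing differences should turn out to be sign-definite under nonnegativity alone, I anticipate no case split, matching the single-valued claim $z_7 = \mathbf{v}_4^T\mathbf{u}_7$.

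Concretely, I would first record the four inner products, noting that the vanishing fourth component of $\mathbf{u}_7$ makes each a short dot product; in particular $\mathbf{v}_4^T\mathbf{u}_7 = (c_1+\ell_1)(c_2+\ell_2)(c_3+3\ell_3)$, where the factor $c_3+3\ell_3$ arises from $-(c_3-\ell_3)+2(c_3+\ell_3)$. I would then compute the three differences $\mathbf{v}_4^T\mathbf{u}_7 - \mathbf{v}_j^T\mathbf{u}_7$ for $j \in \{1,2,3\}$. Two of these collapse after pulling out a common factor,
\begin{align*}
\mathbf{v}_4^T\mathbf{u}_7 - \mathbf{v}_2^T\mathbf{u}_7 &= 4\ell_2\ell_3(c_1+\ell_1),\\
\mathbf{v}_4^T\mathbf{u}_7 - \mathbf{v}_3^T\mathbf{u}_7 &= 4\ell_1\ell_3(c_2+\ell_2),
\end{align*}
both manifestly nonnegative, while the remaining difference expands to
\[
\mathbf{v}_4^T\mathbf{u}_7 - \mathbf{v}_1^T\mathbf{u}_7 = 4\bigl(c_1\ell_2\ell_3 + \ell_1 c_2\ell_3 + \ell_1\ell_2(c_3+\ell_3)\bigr),
\]
again a sum of nonnegative terms under the standing assumptions $c_i \geq 0$ and $\ell_i > 0$.

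Since all three differences are nonnegative purely by nonnegativity---the ordering \eqref{eq:omegaprime} is not even needed here---I conclude that $\mathbf{v}_4^T\mathbf{u}_7$ dominates the other three values, giving $z_7 = \mathbf{v}_4^T\mathbf{u}_7$ in every configuration. The only real work is the algebraic simplification of these differences, and I do not expect a genuine obstacle: this is the most degenerate case in the sequence, its maximizer being constant, so the computations are strictly shorter than those in Lemmas \ref{z_1} and \ref{z_4}, where competing vertices forced actual case distinctions.
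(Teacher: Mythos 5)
Your proposal is correct and matches the paper's proof essentially verbatim: both compute the three differences $\mathbf{v}_4^T\mathbf{u}_7 - \mathbf{v}_j^T\mathbf{u}_7$ for $j\in\{1,2,3\}$ and observe each is nonnegative by nonnegativity alone, with no case split; your expression $4\bigl(c_1\ell_2\ell_3 + \ell_1 c_2\ell_3 + \ell_1\ell_2(c_3+\ell_3)\bigr)$ is just a regrouping of the paper's $4\bigl(\ell_2\ell_3(c_1+\ell_1) + \ell_1(c_2\ell_3+c_3\ell_2)\bigr)$. Your side remarks (the factorization $-(c_3-\ell_3)+2(c_3+\ell_3)=c_3+3\ell_3$ and the non-use of \eqref{eq:omegaprime}) are also accurate.
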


\begin{proof}
The inner product formulas follow from direct computation using the definitions of $\mathbf{v}_j$ and $\mathbf{u}_7$.
We show that $\mathbf{v}_4^T\mathbf{u}_7 \geq \mathbf{v}_j^T\mathbf{u}_7$ for all $j \in \{1,2,3\}$ by computing the pairwise differences:
\begin{align*}
\mathbf{v}_4^T\mathbf{u}_7 - \mathbf{v}_1^T\mathbf{u}_7 &= 4(\ell_2\ell_3(c_1+\ell_1) + \ell_1(c_2\ell_3+c_3\ell_2)) \geq 0,\\
\mathbf{v}_4^T\mathbf{u}_7 - \mathbf{v}_2^T\mathbf{u}_7 &= 4\ell_2\ell_3(c_1+\ell_1) \geq 0,\\
\mathbf{v}_4^T\mathbf{u}_7 - \mathbf{v}_3^T\mathbf{u}_7 &= 4\ell_1\ell_3(c_2+\ell_2) \geq 0.
\end{align*}
All three inequalities hold because of nonnegativity. Thus $z_7 = \mathbf{v}_4^T\mathbf{u}_7$.   
\end{proof}

\begin{lemma}\label{z_8}
The relevant inner products are given by:
    \begin{align*}
       \mathbf{v}_1^T\mathbf{u}_8 &= \left(c_1-\ell_1\right) \left(c_2-\ell_2\right) \left(c_3+3 \ell_3\right),\\
       \mathbf{v}_2^T\mathbf{u}_8 &= \left(c_2-\ell_2\right) \left(\ell_1 \left(\ell_3-c_3\right)+c_1 \left(c_3+3 \ell_3\right)\right),\\
      \mathbf{v}_3^T\mathbf{u}_8 &= \left(c_1-\ell_1\right) \left(\ell_2 \left(\ell_3-c_3\right)+c_2 \left(c_3+3 \ell_3\right)\right),\\
      \mathbf{v}_4^T\mathbf{u}_8 &= \ell_1 \left(c_2 \left(\ell_3-c_3\right)-\ell_2 \left(3 c_3+\ell_3\right)\right)+c_1 \left(\ell_2 \left(\ell_3-c_3\right)+c_2 \left(c_3+3 \ell_3\right)\right).
    \end{align*}
    Then
    $$z_8\coloneq \max\{\mathbf{v}_1^T\mathbf{u}_8,\mathbf{v}_2^T\mathbf{u}_8,\mathbf{v}_3^T\mathbf{u}_8,\mathbf{v}_4^T\mathbf{u}_8\}= \begin{cases}
    \mathbf{v}_2^T\mathbf{u}_8,&\text{if $\frac{c_2}{\ell_2} \geq 1$},\\
    \mathbf{v}_1^T\mathbf{u}_8,&\text{if $\frac{c_2}{\ell_2}<1$}.
\end{cases}$$
\end{lemma}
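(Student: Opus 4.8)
The plan is to follow the template established in Lemmas~\ref{z_5}--\ref{z_7}: first obtain the four inner products by direct computation from the definitions of $\mathbf{v}_j$ and $\mathbf{u}_8$ given in the Appendix preamble, then identify the maximizing vertex by computing pairwise differences and classifying their signs. The shape of the two-case answer suggests that $\mathbf{v}_2$ is the natural pivot: I expect $\mathbf{v}_2^T\mathbf{u}_8$ to dominate both $\mathbf{v}_3^T\mathbf{u}_8$ and $\mathbf{v}_4^T\mathbf{u}_8$ unconditionally, with only the comparison against $\mathbf{v}_1^T\mathbf{u}_8$ depending on the ratio $c_2/\ell_2$.

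Concretely, I would compute the three differences
\begin{align*}
\mathbf{v}_2^T\mathbf{u}_8 - \mathbf{v}_3^T\mathbf{u}_8 &= 4\ell_3(\ell_1 c_2 - c_1\ell_2),\\
\mathbf{v}_2^T\mathbf{u}_8 - \mathbf{v}_4^T\mathbf{u}_8 &= 4\ell_2(\ell_1 c_3 - c_1\ell_3),\\
\mathbf{v}_2^T\mathbf{u}_8 - \mathbf{v}_1^T\mathbf{u}_8 &= 4\ell_1\ell_3(c_2 - \ell_2).
\end{align*}
The first two are nonnegative by the ordering~\eqref{eq:omegaprime}, which gives $c_1/\ell_1 \leq c_2/\ell_2$ and $c_1/\ell_1 \leq c_3/\ell_3$; hence $\mathbf{v}_2^T\mathbf{u}_8$ always beats $\mathbf{v}_3^T\mathbf{u}_8$ and $\mathbf{v}_4^T\mathbf{u}_8$. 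The third difference carries the sign of $c_2 - \ell_2$: when $c_2/\ell_2 \geq 1$ it is nonnegative, so $\mathbf{v}_2^T\mathbf{u}_8$ is the overall maximum; when $c_2/\ell_2 < 1$ it is negative, in which case $\mathbf{v}_1^T\mathbf{u}_8 > \mathbf{v}_2^T\mathbf{u}_8 \geq \max\{\mathbf{v}_3^T\mathbf{u}_8, \mathbf{v}_4^T\mathbf{u}_8\}$ by transitivity, making $\mathbf{v}_1^T\mathbf{u}_8$ the maximum. This reproduces exactly the claimed case split, and since the $\mathbf{v}_1$-versus-$\mathbf{v}_2$ comparison is the only place the parameter ratio enters, no separate comparison of $\mathbf{v}_1^T\mathbf{u}_8$ against $\mathbf{v}_3^T\mathbf{u}_8$ or $\mathbf{v}_4^T\mathbf{u}_8$ is required.

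The work here is purely algebraic, and the payoff of choosing $\mathbf{v}_2$ as the pivot is that the pairwise differences collapse to the clean two-term factored forms above. I anticipate no genuine obstacle; the only mild care needed is verifying that the telescoping cancellations yield precisely these factorizations — in particular that $\mathbf{v}_2^T\mathbf{u}_8 - \mathbf{v}_1^T\mathbf{u}_8$ sheds its dependence on $c_1$ and $c_3$, leaving only the factor $c_2 - \ell_2$. This is the mirror image of Lemma~\ref{z_5}, with the roles of the two dominant vertices interchanged, so the same bookkeeping carries over directly.
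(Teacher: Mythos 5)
Your proposal is correct and matches the paper's proof essentially verbatim: the same three pairwise differences $4\ell_3(\ell_1 c_2 - \ell_2 c_1)$, $4\ell_2(\ell_1 c_3 - \ell_3 c_1)$, and $4\ell_1\ell_3(c_2 - \ell_2)$, with the first two handled by the ordering \eqref{eq:omegaprime} and the third driving the case split on $\frac{c_2}{\ell_2}$. The transitivity remark for the case $\frac{c_2}{\ell_2} < 1$ is the same (implicit) logic the paper uses, so nothing is missing.
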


\begin{proof}
The inner product formulas follow from direct computation using the definitions of $\mathbf{v}_j$ and $\mathbf{u}_8$.
We compute the pairwise differences:
\begin{align*}
\mathbf{v}_2^T\mathbf{u}_8 - \mathbf{v}_3^T\mathbf{u}_8 &= 4\ell_3(\ell_1 c_2 - \ell_2 c_1) \geq 0, \\
\mathbf{v}_2^T\mathbf{u}_8 - \mathbf{v}_4^T\mathbf{u}_8 &= 4\ell_2(\ell_1 c_3 - \ell_3 c_1) \geq 0,\\
\mathbf{v}_2^T\mathbf{u}_8 - \mathbf{v}_1^T\mathbf{u}_8 &= 4\ell_1\ell_3(c_2 - \ell_2).
\end{align*}
The first and second inequalities hold by \eqref{eq:omegaprime}. For the third expression, consider two cases. If $\frac{c_2}{\ell_2} \geq 1$, then $\mathbf{v}_2^T\mathbf{u}_8 - \mathbf{v}_1^T\mathbf{u}_8 \geq 0$ and $z_8 = \mathbf{v}_2^T\mathbf{u}_8$. Otherwise, $\mathbf{v}_2^T\mathbf{u}_8 - \mathbf{v}_1^T\mathbf{u}_8 < 0$ and $z_8 = \mathbf{v}_1^T\mathbf{u}_8$.
   
\end{proof}

\end{document}